\newtheorem{theorem}{Theorem}[section]
\newtheorem{lemma}[theorem]{Lemma}
\newtheorem*{lemma*}{Lemma}
\newtheorem{proposition}[theorem]{Proposition}
\newtheorem{corollary}[theorem]{Corollary}
\theoremstyle{definition}
\newtheorem{remark}[theorem]{Remark}
\newcommand{\C}{\mathbb{C}}
\newcommand{\R}{\mathbb{R}}
\newcommand{\N}{\mathbb{N}}
\newcommand{\Z}{\mathbb{Z}}
\newcommand{\hwv}{\gamma}				  
\newcommand{\Sphere}{\mathbb{S}}          
\newcommand{\weight}{\omega}              
\DeclareMathOperator{\tr}{tr}             
\newcommand{\id}{\mathrm{id}}    		  
\newcommand{\HS}{\mathrm{HS}}             
\DeclareMathOperator*{\esssup}{ess\,sup}  
\DeclareMathOperator{\supp}{supp}         
\newcommand{\Hom}{\mathrm{Hom}}           
\newcommand{\Lin}{\mathcal{L}}            
\newcommand{\FE}{\mathcal{E}}             
\newcommand{\FF}{\mathcal{F}}             
\newcommand{\AbsOp}{\Delta}               
\newcommand{\CRL}{\mathbb{L}}             
\newcommand{\dummy}{\,\cdot\,}
\newcommand{\group}[1]{\mathrm{#1}}       
\newcommand{\dist}{\varrho}   			  
\newcommand{\card}{\operatorname{card}}            
\renewcommand{\Re}{\operatorname{Re}}              
\renewcommand{\Im}{\operatorname{Im}}              
\newcommand{\dbarb}{\Bar{\partial}_b}         
\newcommand{\dbarba}{\Bar{\partial}_b^+}      
\newcommand{\boxb}{\Box_b}                         
\newcommand{\Forms}[1]{\mathrm{B}_{#1}}           
\newcommand{\TT}{Y}                      
\DeclareFontFamily{OMX}{MnSymbolE}{}
\DeclareSymbolFont{MnLargeSymbols}{OMX}{MnSymbolE}{m}{n}
\DeclareFontShape{OMX}{MnSymbolE}{m}{n}{
	<-6>  MnSymbolE5
	<6-7>  MnSymbolE6
	<7-8>  MnSymbolE7
	<8-9>  MnSymbolE8
	<9-10> MnSymbolE9
	<10-12> MnSymbolE10
	<12->   MnSymbolE12
}{}
\DeclareFontShape{OMX}{MnSymbolE}{b}{n}{
	<-6>  MnSymbolE-Bold5
	<6-7>  MnSymbolE-Bold6
	<7-8>  MnSymbolE-Bold7
	<8-9>  MnSymbolE-Bold8
	<9-10> MnSymbolE-Bold9
	<10-12> MnSymbolE-Bold10
	<12->   MnSymbolE-Bold12
}{}
\let\llangle\@undefined
\let\rrangle\@undefined
\DeclareMathDelimiter{\llangle}{\mathopen}%
{MnLargeSymbols}{'164}{MnLargeSymbols}{'164}
\DeclareMathDelimiter{\rrangle}{\mathclose}%
{MnLargeSymbols}{'171}{MnLargeSymbols}{'171}
\newcommand{\lnorm}{\left\Vert}
\newcommand{\rnorm}{\right\Vert}
\newcommand{\biglnorm}{\bigl\Vert}
\newcommand{\bigrnorm}{\bigr\Vert}
\newcommand{\lset}{\left\{}
\newcommand{\rset}{\right\}}
\newcommand{\Biglset}{\Bigl\{}
\newcommand{\Bigrset}{\Bigr\}}
\newcommand{\lip}{\left\langle}
\newcommand{\rip}{\right\rangle}
\newcommand{\bigglip}{\biggl\langle}
\newcommand{\biggrip}{\biggr\rangle}
\newcommand{\biglip}{\bigl\langle}
\newcommand{\bigrip}{\bigr\rangle}
\newcommand{\Biglip}{\Bigl\langle}
\newcommand{\Bigrip}{\Bigr\rangle}
\newcommand{\labs}{\left\vert}
\newcommand{\rabs}{\right\vert}
\newcommand{\biglabs}{\bigl\vert}
\newcommand{\bigrabs}{\bigr\vert}
\newcommand{\bigrest}{\big\vert}
\newcommand{\rest}{\vert}
\title[Spectral multipliers for the Kohn Laplacian]{Spectral multipliers for the Kohn Laplacian \\ on forms on the sphere in $\mathbb{C}^n$}
\author[V. Casarino]{Valentina Casarino}
\address{Dipartimento di Tecnica e Gestione dei Sistemi Industriali, Universit\`a degli Studi di Padova \\ Stradella San Nicola 3 \\ 36100 Vicenza \\ Italy}
\email{valentina.casarino@unipd.it}
\author[M.G. Cowling]{Michael G. Cowling}
\address{School of Mathematics\\ University of New South Wales\\ UNSW Sydney 2052\\ Australia}
\email{m.cowling@unsw.edu.au}
\author[A. Martini]{Alessio Martini}
\address{School of Mathematics\\ University of Birmingham\\Edgbaston\\Birmingham B15 2TT \\ UK}
\email{a.martini@bham.ac.uk}
\author[A. Sikora]{Adam Sikora}
\address{Department of Mathematics \\ Macquarie University \\ NSW 2109 \\ Australia}
\email{adam.sikora@mq.edu.au}
\thanks{The first and third-named authors were partially supported by GNAMPA (Progetto 2014 ``Moltiplicatori e proiettori spettrali associati a Laplaciani su sfere e gruppi nilpotenti'') and MIUR (PRIN 2010-2011 ``Variet\`a reali e complesse: geometria, topologia e analisi armonica'').
The second, third, and fourth-named authors were supported by the Australian Research Council (project DP110102488).
The third-named author gratefully acknowledges the support of the Humboldt Foundation and of the Deutsche Forschungsgemeinschaft (project MA 5222/2-1). }
\subjclass[2000]{Primary: 42B15, 43A85; Secondary: 32V20}
\date{\today. Preliminary version}
\begin{document}
\begin{abstract}
The unit sphere $\Sphere$ in $\mathbb{C}^n$ is equipped with the tangential Cauchy--Riemann complex and the associated Laplacian $\boxb$.
We prove a H\"ormander spectral multiplier theorem for~$\boxb$ with critical index $n-1/2$, that is, half the topological dimension of $\Sphere$.
Our proof is mainly based on representation theory and on a detailed analysis of the spaces of differential forms on $\Sphere$.
\end{abstract}

\maketitle

\section{Introduction}

The sphere in $\C^n$ is often studied as a model CR manifold.
The tangential Cauchy--Riemann (CR) complex on the sphere and in the conformally equivalent context of the Heisenberg group was studied by various authors, including \cite{Fo,FoSt,Ge}.
The CR complex gives rise to a second order operator $\boxb$, of ``Laplace type'', which is sometimes sub\-elliptic and sometimes not.
This operator acts on $(i,j)$ forms, where $0 \leq i, j \leq n-1$,  but, like most authors, we restrict our attention to the case of $(0,j)$ forms.
With this restriction, since the sphere is strongly pseudoconvex, $\boxb$ is sub\-elliptic if $0 < j  < n-1$, but has an infinite-dimensional kernel when $j = 0$ or $j = n-1$ (see, for example, \cite{FoKo,KorVa}); this kernel may also be viewed as a CR-cohomology space.
In this paper, we deal with the cases when $\boxb$ is sub\-elliptic; in a future paper, we intend to deal with the remaining cases.
By doing so, we deal with forms in this paper, but manage to deal with functions only in the future one; on the other hand, we do not have to worry here about the complications such as the lack of Sobolev embedding theorems that arise from the fact that $\boxb$ has a nontrivial kernel.

Let $\AbsOp$ be a self-adjoint positive operator with dense domain in $L^2(M)$, the usual Lebesgue space of (equivalence classes of) functions on a $d$-dimensional manifold $M$, endowed with a measure that is absolutely continuous with respect to the Lebesgue measure when written in any coordinate system.
Then the spectral theorem allows us to form the bounded operator $F(\AbsOp)$ whenever $F$ is a bounded Borel function on $\R$, or just on the spectrum of $\AbsOp$.
We and many other authors seek conditions on $F$ that ensure that the operator $F(\AbsOp)$ extends continuously from $L^p(M) \cap L^2(M)$ to a bounded operator on $L^p(M)$ for some $p\neq 2$.

Let $H^s(\R)$ denote the usual Sobolev space on $\R$. 
We say that $F: \R \to \C$ satisfies a H\"ormander condition of order $s$ if
\begin{equation}\label{eq:Hormander-condition}
\sup_{t \in \R_+} \biglnorm  F(t \dummy  ) \, \eta \bigrnorm_{H^s(\R)} < \infty
\end{equation}
for one and hence all nonzero smooth functions $\eta$ with support in $[1,2]$; we fix one of these $\eta$.
In the case where $\Delta$ is the Laplacian $\AbsOp$ on $\R^d$, the H\"ormander multiplier theorem \cite{H} implies that $F(\AbsOp)$ is both bounded on $L^p(\R^d)$ when $1 < p < \infty$ and of weak type $(1,1)$ if the condition \eqref{eq:Hormander-condition} holds for some $s > d/2$.
We call $d/2$ the \emph{critical index}.
In this case, the critical index cannot be improved, and so this is a sharp multiplier theorem.

Sharp multiplier theorems have a long history, going back at least fifty years in the euclidean case.
When the operator $\AbsOp$ is sub\-elliptic but not elliptic, it is often associated to a \emph{homogeneous dimension} $Q$ that is larger than the topological dimension $d$ of $M$ (see, for example, \cite{FP,RoSt} for more about this). 
In many cases the euclidean techniques generalise reasonably readily to establish multiplier theorems with critical index $Q/2$; see, for example, \cite{Ch,MaMe} for the case of homogeneous sublaplacians on stratified Lie groups and \cite{He2,DOS} for the more general setting of spaces of homogeneous type. 
However in several examples it turns out that the critical index $Q/2$ is not sharp.

On the Heisenberg group and similar groups, sharp theorems with critical index $d/2$ were first proved by Hebisch \cite{He1} and M\"{u}ller and Stein \cite{MSt} for a homogeneous sublaplacian on functions; other step-two nilpotent groups have been treated by Martini and M\"{u}ller \cite{MarM}.
A corresponding result for the operator $\boxb$ on the Heisenberg group may be deduced from the work of M\"uller, Ricci and Stein \cite{MuRiSt}. 
There are various Laplacians associated to forms, and one may pose the same question for forms as for functions.
The case of the Hodge Laplacian on forms on the Heisenberg group was considered by M\"{u}ller, Peloso and Ricci \cite{MPeRi1, MPeRi2}.
At this time, it is unclear whether to expect that the sharp index is $d/2$ for all nilpotent Lie groups.

Cowling and Sikora \cite{CS} treated a sub\-elliptic operator $\AbsOp$ on functions on the group $\group{SU}(2)$, and these results were extended to a similar operator $\AbsOp$ on functions on the sphere in $\C^n$ by Cowling, Klima and Sikora \cite{CKS}; in all these theorems the critical index is $d/2$. 
The operator $\AbsOp$ is related to, but not the same as, the operator $\boxb$ on functions; in particular, $\AbsOp$ is sub\-elliptic while $\boxb$ is not. 
Note that, in the case of the sphere $\Sphere$, the topological dimension $d$ is $2n-1$, while the homogeneous dimension $Q$ associated to $\AbsOp$ and $\boxb$ is $2n$.

A quite general multiplier theorem for the operator  $\boxb$ acting on functions on a compact pseudoconvex CR manifold of finite type may be found in \cite{Street}. 
There the critical index is larger (it is equal to $(Q+1)/2$) and it is mentioned that similar methods yield an analogous result for the operator $\dbarba \dbarb$ acting on $(0,j)$-forms (which coincides with $\boxb$ in the case $j=0$).

For much more on the history of this kind of problem, see the references cited in the papers mentioned above.

Our main theorem may be stated as follows. 
Let $\Lambda^{0,j}$ denote the bundle of $(0,j)$-forms on $\Sphere$ and $L^p(\Lambda^{0,j})$ the corresponding space of $L^p$ sections.

\begin{theorem}\label{thm:main}
Let $\boxb$ be the Kohn Laplacian acting on $(0,j)$-forms on the unit sphere $\Sphere$ in $\C^n$, where $0 < j < n-1$.
Suppose that $s > n- 1/2$ and that $F: \R \to \C$ satisfies the H\"ormander condition \eqref{eq:Hormander-condition}.
Then $F(\boxb)$, initially defined on $L^2(\Lambda^{0,j})$, extends continuously to an operator on $L^p(\Lambda^{0,j})$ that is bounded when $1 < p < \infty$ and of weak type $(1,1)$.
Further, the associated operator norms are bounded by ($p$-dependent) multiples of $\sup_{t \in \R_+} \biglnorm  F(t \dummy  ) \, \eta \bigrnorm_{H^s(\R)}$.
\end{theorem}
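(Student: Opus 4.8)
\emph{Overall strategy.} The plan is to reduce the statement to a weak type $(1,1)$ estimate for $F(\boxb)$ together with the trivial $L^{2}$ bound: by Marcinkiewicz interpolation this yields boundedness on $L^{p}$ for $1<p\le 2$, and the range $2\le p<\infty$ then follows by duality, since $\boxb$ is self-adjoint, so that $F(\boxb)^{\ast}=\overline F(\boxb)$, while $\overline F$ satisfies \eqref{eq:Hormander-condition} with the same constant (we fix $\eta$ to be real-valued). One passes freely between $\boxb$ and $\sqrt{\boxb}$, the H\"ormander condition being stable under $t\mapsto t^{2}$. The weak type $(1,1)$ bound is obtained by a Calder\'on--Zygmund argument on $\Sphere$, which is a space of homogeneous type for the natural measure $\mu$ and the Carnot--Carath\'eodory distance $\dist$ associated with the CR structure; the two ingredients are finite propagation speed for the wave equation driven by $\boxb$ and a sharp weighted estimate for the integral kernel of $F(\boxb)$, and it is in the latter that representation theory plays the central role.

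\emph{Decomposition and finite propagation speed.} The group $\group{U}(n)$ acts transitively on $\Sphere$ with stabiliser $\group{U}(n-1)$, the bundle $\Lambda^{0,j}$ being homogeneous and associated with the natural representation of $\group{U}(n-1)$ on the fibre $\Lambda^{0,j}\C^{n-1}$; moreover $\Sphere$ carries a free circle action (scalar multiplication) commuting with $\group{U}(n)$ and with $\boxb$, realising $\Sphere$ as a circle bundle over $\C\mathbb{P}^{n-1}$. Using these symmetries and the description of $(0,j)$-forms developed earlier in the paper, I would split $L^{2}(\Lambda^{0,j})$ into an orthogonal direct sum of finite-dimensional $\group{U}(n)$-invariant subspaces on each of which $\boxb$ acts as an explicitly computable positive scalar; since $0<j<n-1$, the operator $\boxb$ is subelliptic, hence has discrete spectrum contained in some $[c,\infty)$ with $c>0$, which sidesteps the complications mentioned in the introduction. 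For bounded Borel $F$ this yields the ($\Hom$-valued) Schwartz kernel of $F(\boxb)$ as a $\group{U}(n)$-invariant kernel $\sum_{\pi}F(\lambda_{\pi})\,\Phi_{\pi}$ over the occurring irreducible components $\pi$, where the reproducing kernels $\Phi_{\pi}$ are expressed through zonal functions of Jacobi type on $\C\mathbb{P}^{n-1}$. Independently, $\boxb$ is of Laplace type with the same scalar principal symbol as a sublaplacian on $\Sphere$, so the propagator $\cos(t\sqrt{\boxb})$ has kernel supported in $\{(x,y):\dist(x,y)\le|t|\}$; via Fourier inversion this localises the pieces of a dyadic decomposition of $F(\boxb)$ to Carnot--Carath\'eodory balls of the appropriate radius. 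The doubling dimension of $(\Sphere,\dist)$ is the homogeneous dimension $Q=2n$, strictly larger than the topological dimension $d=2n-1$, but every Carnot--Carath\'eodory ball lies inside the Riemannian ball of the same radius, whose measure is comparable to the $d$-th power of the radius.

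\emph{The weighted kernel estimate (the crux).} The heart of the proof is a bound of the schematic form
\begin{equation*}
\sup_{y\in\Sphere}\int_{\Sphere}\labs K_{F(\boxb)}(x,y)\rabs_{\HS}\,\bigl(1+R\,\dist(x,y)\bigr)^{\sigma}\,d\mu(x)\ \lesssim\ \sup_{t\in\R_{+}}\biglnorm F(t\,\dummy\,)\,\eta\bigrnorm_{H^{\sigma'}(\R)},
\end{equation*}
valid uniformly in $R\ge1$ for every $F$ supported (as a function of $\boxb$) in $[R^{2}/4,R^{2}]$ and for all exponents $d/2<\sigma<\sigma'\le s$. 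Inserting the expansion above, the left-hand side becomes a weighted sum of Jacobi polynomials $P^{(\alpha,\beta)}_{k}(\cos 2\theta)$, with $\dist$ essentially the angle $\theta$, weighted by the dimensions of the spaces and by the values $F(\lambda_{\pi})$. The difficulty --- and, I expect, the main obstacle --- is that estimating each Jacobi polynomial crudely by its endpoint value only produces the exponent $Q=2n$, hence the critical index $Q/2=n$; to reach $d=2n-1$, and so the index $n-1/2$, one must exploit the oscillatory decay of $P^{(\alpha,\beta)}_{k}(\cos 2\theta)$ in the bulk region (where it is of order $k^{-1/2}$ rather than $k^{\alpha}$), together with the precise arithmetic of the eigenvalues $\lambda_{\pi}$ --- in particular how the many $\group{U}(n)$-components sharing a given eigenvalue are distributed --- and, since we work with $(0,j)$-forms rather than functions, the additional matrix structure carried by the fibre $\Lambda^{0,j}\C^{n-1}$. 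This is the forms analogue of the special-function analysis of \cite{CS,CKS}, and it is here that the detailed study of differential forms on $\Sphere$ is indispensable.

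\emph{Conclusion.} Granting the preceding two ingredients, a standard Calder\'on--Zygmund argument --- with off-diagonal decay measured, following \cite{CS,CKS}, in terms of the Riemannian distance, which is legitimate because Carnot--Carath\'eodory balls sit inside Riemannian balls of the same radius and the latter carry a $d$-dimensional volume growth --- produces the weak type $(1,1)$ bound: decomposing $F$ dyadically into pieces whose rescalings have uniformly controlled $H^{\sigma'}$ norm for some $\sigma'\in(d/2,s]$, finite propagation speed confines the kernel of each piece to a ball of the corresponding radius, and the estimate of the crux (with $\sigma>d/2$) controls its off-diagonal $L^{1}$ mass and the required H\"ormander-type cancellation; the $L^{2}$ bound is immediate from the spectral theorem. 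Together with the interpolation and duality remarks above, this proves Theorem~\ref{thm:main}, with all operator norms dominated by $\sup_{t\in\R_{+}}\lnorm F(t\,\dummy\,)\,\eta\rnorm_{H^{s}(\R)}$.
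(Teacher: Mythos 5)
Your overall architecture matches the paper's: the theorem is deduced from the abstract Cowling--Sikora multiplier theorem, whose inputs are doubling, finite propagation speed for $\cos(t\sqrt{\boxb})$ relative to the control distance, on-diagonal Sobolev estimates, and a weighted Plancherel estimate for the kernel of $F(\sqrt{\boxb})$; and you correctly locate the entire difficulty in the last of these, which is what lowers the index from $Q/2=n$ to roughly $d/2$ with $d$ close to $2n-1$. But at exactly that point your argument stops being a proof: you state the weighted kernel estimate as something you ``expect'' to extract from the $k^{-1/2}$ bulk decay of Jacobi polynomials together with unspecified control of the matrix structure on the fibre, and you do not carry this out. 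That estimate is the core technical content of the paper (Sections 4 and 5), so the proposal has a genuine gap precisely where the new mathematics lives. Compounding this, the route you sketch is the one the paper deliberately avoids: for $0<j<n-1$ there is no ready-made explicit Jacobi-type formula for the $\Hom$-valued reproducing kernels $K^\Upsilon_{pqj}$ analogous to the scalar zonal functions of \cite{CKS}, and producing one with uniform asymptotics would itself be a substantial project, not a routine adaptation.

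What the paper does instead is purely representation-theoretic. It shows (Theorem \ref{thm:Theorem-delta}) that multiplying $K^\Upsilon_{pqj}(z,w)$ by $\lip z,w \rip$ or its conjugate yields a combination of at most three kernels $K^{\Upsilon'}_{p'q'j}$, with coefficients computed from tensor-product decompositions of $\group{U}(n)$-representations, Clebsch--Gordan symmetry (Lemma \ref{lem:KlymikVilenkin}), Weyl's dimension formula, and the norms of highest weight forms. Consequently multiplication by the weight $\weight(z,w)^2 = 1-\labs \lip z,w\rip \rabs^2$ acts essentially diagonally on the kernels, with factor $1-\varepsilon^{\Upsilon\Upsilon}_{ppqqj} \lesssim (2+p)(2+q)/(2+p+q)^2$, and a lattice-point count along the hyperbolae $p'q'\simeq i^2$ delivers the gain of one dimension. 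Note also that the improvement is implemented through the weight $\varpi=\weight^\theta$ with ``dimension'' $d=2n-\theta$ for every $\theta\in(0,1)$ (whence any $s>n-1/2$), not by passing from control balls to Riemannian balls: the Calder\'on--Zygmund decomposition is still run on $(\Sphere,\dist,\sigma)$ with $\sigma(B(z,r))\simeq\min(1,r^{2n})$, and your remark about Riemannian balls of volume $r^{2n-1}$, while true, is not by itself a mechanism for reaching the critical index $n-1/2$.
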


In studying a Laplacian on a compact manifold, it is sometimes necessary to add the term $\labs m(0)\rabs$ to the bound on the operator norm, in order to take care of the zero spectrum. 
However in our case this is not necessary as the spectrum of $\boxb$ is strictly contained in $\R_+$.

The same methods allow us to obtain $L^p$-boundedness results for the Bochner--Riesz means associated to $\boxb$ on $\Sphere$.
Analogous results for the sublaplacian $\AbsOp$ on $\Sphere$ have been recently proved in \cite{CaPe}; we refer to \cite{Mauceri,Mueller} for earlier results on the Heisenberg group.
In all these papers, however, the convergence of the Bochner--Riesz means is proved only when $\delta> (2n-1) \labs {1}/{p} - {1}/{2} \rabs$.

\begin{theorem}\label{thm:bochnerriesz}
Let $\boxb$ be the Kohn Laplacian acting on $(0,j)$-forms on the unit sphere $\Sphere$ in $\C^n$, where $0 < j < n-1$. 
If $p \in [1,\infty]$ and
\[
\delta > (2n-2) \labs\frac{1}{p} - \frac{1}{2} \rabs,
\]
then the operators $(1-t \boxb)_+^\delta$ are bounded on $L^p(\Lambda^{0,j})$, uniformly in $t \in \R_+$.
\end{theorem}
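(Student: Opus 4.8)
The strategy is to obtain Theorem~\ref{thm:bochnerriesz} from Theorem~\ref{thm:main} by complex interpolation against a trivial $L^2$ estimate. Since $\boxb$ is self-adjoint and the functions $\lambda\mapsto(1-t\lambda)_+^\delta$ are real-valued, each operator $(1-t\boxb)_+^\delta$ is self-adjoint; as $\labs 1/p-1/2\rabs=\labs 1/p'-1/2\rabs$, it suffices to treat $1\le p\le 2$, the range $2\le p\le\infty$ then following by duality. The case $p=2$ is immediate: for $\delta\ge 0$ one has $(1-t\lambda)_+^\delta\le 1$ for all $\lambda\ge 0$, so the spectral theorem gives $\Vert(1-t\boxb)_+^\delta\Vert_{L^2\to L^2}\le 1$, uniformly in $t\in\R_+$.

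The one elementary input needed is the well-known fact that, for $\delta\in\C$ with $\Re\delta$ in a fixed compact subset of $(-1/2,\infty)$ and for every $s<\Re\delta+1/2$, the function $(1-\dummy)_+^\delta$ satisfies the H\"ormander condition \eqref{eq:Hormander-condition} of order $s$, with a bound depending only on $s$, on the compact set, and polynomially on $\labs\Im\delta\rabs$: its only singularity on $\R_+$ is the corner at $1$, locally modelled on $x_+^\delta$, and the dyadically rescaled pieces $(1-t\,\dummy)_+^\delta\,\eta$ vanish identically for $t>1$ and are uniformly bounded in $H^s$ for $t\le1$, which is precisely what \eqref{eq:Hormander-condition} asks for. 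Combined with Theorem~\ref{thm:main}, this shows that for every $p_0\in(1,\infty)$ and every real $\delta_1>n-1$ (so that $(n-1/2,\,\delta_1+1/2)\ne\emptyset$), the operators $(1-t\boxb)_+^{\delta_1+iu}$ are bounded on $L^{p_0}(\Lambda^{0,j})$, uniformly in $t\in\R_+$ and with norm growing at most polynomially in $u\in\R$.

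Now fix $p\in(1,2)$ and $\delta>(2n-2)\labs 1/p-1/2\rabs=(n-1)(2/p-1)$. For $p_0\in(1,p)$ define $\theta=\theta(p_0)\in(0,1)$ by $1/p=(1-\theta)/p_0+\theta/2$; as $p_0\downarrow1$ one has $1-\theta\to2/p-1$, hence $\delta/(1-\theta)\to\delta/(2/p-1)>n-1$, so for $p_0$ sufficiently close to $1$ the number $\delta_1:=\delta/(1-\theta)$ exceeds $n-1$. Apply Stein's interpolation theorem for analytic families of operators to
\[
T_z=e^{z^2}\,(1-t\boxb)_+^{(1-z)\delta_1},\qquad 0\le\Re z\le1,
\]
which is weakly analytic on the dense linear span of the eigenforms of $\boxb$ and, by the spectral theorem, satisfies $\Vert T_z\Vert_{L^2\to L^2}\le e^{1-(\Im z)^2}$ throughout the strip. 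On the line $\Re z=0$ the exponent $(1-iu)\delta_1$ has real part $\delta_1>n-1$, so by the previous paragraph $\Vert T_{iu}\Vert_{L^{p_0}\to L^{p_0}}$ is bounded uniformly in $u$ and $t$, the factor $e^{(iu)^2}=e^{-u^2}$ absorbing the polynomial growth; on the line $\Re z=1$ one has $\labs e^{z^2}(1-t\lambda)_+^{-iu\delta_1}\rabs\le e^{1-u^2}$, whence $\Vert T_{1+iu}\Vert_{L^2\to L^2}\le e$. Interpolation then yields boundedness of $T_\theta=e^{\theta^2}(1-t\boxb)_+^{(1-\theta)\delta_1}=e^{\theta^2}(1-t\boxb)_+^{\delta}$ on $L^p(\Lambda^{0,j})$, uniformly in $t$; discarding the constant $e^{\theta^2}$ settles $p\in(1,2)$, and hence $p\in(2,\infty)$ by duality and self-adjointness. (The only nontrivial bookkeeping is to verify that the interpolation exponents combine to land exactly on $(2n-2)\labs 1/p-1/2\rabs$ rather than on the weaker $(2n-1)\labs 1/p-1/2\rabs$ — this is exactly where the sharpness of the critical index $n-1/2$ of Theorem~\ref{thm:main}, as opposed to the homogeneous-dimension value $Q/2=n$, is used.)

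The delicate point, and the one I expect to be the main obstacle, is the pair of endpoints $p\in\{1,\infty\}$, where the hypothesis reads $\delta>n-1=(2n-2)/2$. Theorem~\ref{thm:main} applied directly gives only the weak-type $(1,1)$ bound for $(1-t\boxb)_+^\delta$, and the interpolation above genuinely degenerates as $p\downarrow1$ (the constant in the H\"ormander multiplier theorem blows up), so neither argument reaches strong $L^1$ boundedness; nor can one appeal to Gaussian heat-kernel and finite-propagation-speed estimates in the naive way, since these are governed by the homogeneous dimension $Q=2n$ and would only yield the threshold $(Q-1)/2=n-1/2$. I would therefore establish the endpoint by a direct estimate showing that the Schwartz kernel of $(1-t\boxb)_+^\delta$ lies in $L^1$ uniformly in $t$ once $\delta>n-1$, drawing on the same representation-theoretic description of the spectral decomposition of $\boxb$ on $(0,j)$-forms — the explicit eigenvalues and the sizes of the corresponding eigenprojections — that underlies Theorem~\ref{thm:main}; the case $p=\infty$ then follows by duality. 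Modulo this endpoint, Theorem~\ref{thm:bochnerriesz} is a formal consequence of Theorem~\ref{thm:main}.
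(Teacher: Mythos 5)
Your interpolation argument for the open range $p\in(1,\infty)$ is sound: Theorem~\ref{thm:main} applied to $(1-t\,\dummy)_+^{\delta_1+iu}$ with $\delta_1>n-1$, combined with the trivial $L^2$ bound and Stein's theorem for analytic families, does land on the exponent $(2n-2)\labs 1/p-1/2\rabs$, for exactly the reason you indicate. But the theorem asserts boundedness for $p\in[1,\infty]$, and your treatment of the endpoints $p\in\lset 1,\infty\rset$ is only a plan, not a proof: you say you ``would establish the endpoint by a direct estimate showing that the kernel of $(1-t\boxb)_+^\delta$ lies in $L^1$ uniformly in $t$ once $\delta>n-1$,'' without carrying it out. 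That is a genuine gap, and it is the main content of the endpoint case. The missing ingredient is already in the paper: Theorem~\ref{thm:abstractmult2} is precisely the uniform $L^1$ kernel estimate you describe, namely $\esssup_y\int_X\labs K_{F(t\sqrt{\AbsOp})}(x,y)\rabs\,dx\lesssim\lnorm F\rnorm_{H^s(\R)}$ for compactly supported $F$ and $s>d/2$. Its hypotheses are exactly those verified in the proof of Theorem~\ref{thm:main} with $d=2n-\theta$, $\theta\in(0,1)$; since $(1-\lambda^2)_+^z\in H^s(\R)$ for all $s<\Re z+1/2$, taking $\theta$ close to $1$ gives the $L^1$ (hence by duality $L^\infty$) bound, uniformly in $t$ and in $\Im z$, whenever $\Re z>n-1$. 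Your worry that heat-kernel/finite-propagation arguments only see the homogeneous dimension $Q=2n$ and hence the worse threshold $n-1/2$ applies to the naive version; the weighted Plancherel estimate (Hypothesis (v) with weight $\weight^\theta$) is exactly what lowers the effective dimension to $2n-\theta$ and makes the threshold $\delta>n-1$ reachable at $p=1$.

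Once Theorem~\ref{thm:abstractmult2} is invoked, the paper's proof is also structurally simpler than yours: it interpolates the analytic family $z\mapsto(1-t\boxb)_+^z$ between the line $\Re z=\delta_1>n-1$ (where one has uniform $L^p$ bounds for \emph{every} $p\in[1,\infty]$, in particular $p=1$) and the line $\Re z=0$ (where one has the trivial $L^2$ bound), which covers the whole range $p\in[1,\infty]$ in a single stroke with the same exponent arithmetic you carried out. I recommend you replace your $L^{p_0}$-based interpolation and the sketched endpoint argument by this single interpolation from the $L^1$ endpoint.
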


The proof of these results is based on an abstract multiplier theorem of Cowling and Sikora \cite{CS}; strictly speaking, this needs to be adapted to deal with forms rather than functions, but this is a routine modification.
The theorem is also stated in \cite{CKS}, though that paper contains some minor errors which may lead to confusion; corrections are available from the first and third-named authors of \cite{CKS}.

The crucial step that allows us to obtain $n-1/2$ as critical index is the proof a ``weighted Plancherel estimate''. 
This is, roughly speaking, an estimate of a weighted $L^2$-norm of the integral kernel $K_{F(\boxb)}$ of the operator $F(\boxb)$ in terms of a (sort of) $L^2$-norm of the multiplier $F$. 
This estimate in turn reduces to the problem of determining how an eigenform of $\boxb$, after multiplication by a suitable weight, decomposes as a linear combination of eigenforms.

The operator $\boxb$ is $\group{U}(n)$-invariant, hence in order to determine its spectral resolution it is natural to consider the decomposition of the representation of $\group{U}(n)$ on $L^2(\Lambda^{0,j})$ into its irreducible components.
This decomposition was worked out by Folland \cite{Fo}, whose detailed analysis we use extensively and develop; in the case of functions ($j=0$), this is a refinement of the classical decomposition of $L^2(\Sphere)$ into spherical harmonics (see also \cite[Chapter 12]{Rudin}).
Since the representation on $L^2(\Lambda^{0,j})$ is multiplicity free, the operator $\boxb$ acts on each irreducible component as a scalar and forms associated to irreducible subrepresentations are eigenforms.

The key observation here is that the operation of multiplication of an eigenform by a (polynomial) weight may be interpreted in a representation-theoretic fashion, by taking the tensor product of an irreducible component of $L^2(\Lambda^{0,j})$ with a suitable representation of $\group{U}(n)$. 
Accordingly, the aforementioned decomposition of an eigenform multiplied by a weight corresponds to the decomposition of a tensor product representation into its irreducible components and classical results from the representation theory of $\group{U}(n)$ may be applied. 
In particular, the representations under consideration are multiplicity-free, and coupled with some symmetry properties of Clebsch--Gordan coefficients and the relations between the representations on $L^2(\Lambda^{0,j})$ for different values of $j$, this allows us to discover enough about the decompositions corresponding to multiplication by a weight to be able to prove the weighted Plancherel estimate.

Note that in \cite{CKS} a different route is followed: an explicit formula for the so-called zonal spherical functions is proved and used to determine the effect of multiplication by a weight.
No such formula is exploited here.
Hence, this paper provides an alternative approach to that of \cite{CKS} for the operator treated there; one should compare Lemma 3.1 of \cite{CKS} with our Theorem \ref{thm:Theorem-delta} (and their proofs).

The plan of the paper is the following.
The next section of this paper states the general multiplier theorem. 
Section 3 recalls the definition of the operator $\boxb$ and some of its basic properties, which immediately establish some of the conditions of the abstract theorem.
The detailed analysis of the spaces of forms on $\Sphere$ and the proof of the weighted Plancherel estimate are carried out in Sections 4 and 5.
In Section 6, we prove some representation theoretic results needed for our analysis.

\section{An abstract multiplier theorem}\label{section:abstracttheorem}

The proof of our result for the operator $\boxb$ is based on an abstract multiplier theorem obtained in \cite{CS} (namely, Theorem 3.6 there).

In fact, as stated in \cite{CS}, the theorem applies to operators acting on scalar (square-integrable) functions on a metric measure space $(X,\dist,\mu)$; that is, $X$ is a space with a metric (distance function) $\dist$ and a Borel measure $\mu$.
Here instead we work with operators acting on forms, that is, on sections of certain vector bundles over $X$.
However, the extension of the abstract theorem to the  case of vector bundles is straightforward, given the correct definitions and conventions (see, for instance, the remarks about this extension in \cite{Sik}).

Suppose that $\FE$ is a continuous complex vector bundle on $X$ of rank $m$ (that is, the fibres $\FE_x$ of $\FE$ are isomorphic to $\C^m$ for all $x \in X$) and with a measurable (with respect to $x$) inner product $\lip\dummy , \dummy \rip_x$ along the fibres. 
For $\alpha(x)\in \FE_x$ we put $\labs \alpha(x)\rabs_x^2=\lip \alpha(x), \alpha(x)\rip_x$. 
To simplify the notation,  we will often write $\lip\dummy , \dummy \rip$   and $\labs \dummy \rabs$ instead of $\lip\dummy , \dummy \rip_x$ and $\labs \dummy \rabs_x$.
Now for sections $\alpha$ and $\beta$ of $\FE$ we define $\lnorm\alpha\rnorm_{L^p(\FE)}$ and $\llangle \alpha,\beta \rrangle$ by
\[
\lnorm\alpha\rnorm_{L^p(\FE)}^p=\int_X\labs \alpha(x)\rabs^p \,d\mu(x) \qquad \text{and} \qquad  \llangle \alpha,\beta \rrangle= \int_X \lip \alpha(x),\beta(x)\rip \,d\mu(x).
\]
By $L^p(\FE)$ we denote the Banach spaces of sections of $\FE$ corresponding to these norms.
Note that $L^2(\FE)$ is a Hilbert space with the inner product $\left\llangle \dummy , \dummy  \right\rrangle$.

Next we describe the notion of \emph{integral operators}.
Suppose that $\FE$ and $\FF$ are continuous vector bundles of ranks $m$ and $n$ with base spaces $(X,\dist,\mu)$ and $(Y,\sigma,\nu)$ endowed with inner products as above.
Given $x \in X$ and $y \in Y$, we consider the space $\Hom(\FE_x,\FF_y)$ of all linear homomorphisms from $\FE_x$ to $\FF_y$.
We equip $\FE_x$ and $\FF_y$ with inner products and consider two natural norms on $\Hom(\FE_x,\FF_y)$: the Hilbert--Schmidt norm $\labs \dummy \rabs_{\HS}$ and the operator norm $\labs \dummy \rabs$.
Note that
\[
\labs K\rabs
\le  \labs K\rabs_{\HS}
\le \min(m,n)^{1/2}\labs K\rabs
\]
for all $K\in \Hom(\FE_x,\FF_y)$.
By $[\FE,\FF]$ we denote the continuous bundle with base space $Y \times X$ and with fibre $\Hom(\FE_x,\FF_y)$ over the point $(y,x)$ (note the change of order of $x$ and $y$ here).

We say that $T$ is an \emph{integral operator with kernel $K_{T}$} if $K_{T}$ is a section of  $[\FE,\FF]$ such that $\labs K_{T}\rabs$ is locally integrable on $(Y \times X, \nu\times\mu)$ and
\[
  \llangle T\alpha, \beta\rrangle
= \int_{Y} \lip T\alpha ,\beta\rip \, d \nu
= \int_{Y} \int_{X} \lip K_{T}(y,x) \, \alpha(x), \beta(y)\rip \, d \mu(x) \, d \nu(y)
\]
for all sections $\alpha$ in $C_c(\FE)$ and $\beta$ in~$C_c(\FF)$.

If $T$ is bounded from $L^1(\FE)$ to~$L^q(\FF)$, where $q > 1$, then $T$ is an integral operator, and
\[
\lnorm T\rnorm_{L^1(\FE) \to L^q(\FF)}
= \esssup_{x\in X} \sup_{\substack{v \in \FE_x \\ \labs v\rabs \leq 1}} \lnorm K_{T}(\dummy ,x) v \rnorm_{L^q(\FF)};
\]
conversely, if $T$ is an integral operator and the right hand side of the above equality is finite, then $T$ is bounded from $L^1(\FE)$ to~$L^q(\FF)$, even if $q = 1$. 
From the above equality, it follows in particular that
\[
{m}^{-1/q} \esssup_{x\in X} \lnorm \labs K_{T}(\dummy ,x)\rabs\rnorm_{L^q(Y)}
\le \lnorm T\rnorm_{L^1(\FE) \to L^q(\FF)}
\le \esssup_{x\in X} \lnorm \labs K_{T}(\dummy ,x)\rabs \rnorm_{L^q(Y)}.
\]
There is a dual characterization of the operator norm from $L^{q'}(\FE)$ to $L^\infty(\FF)$:
\begin{equation}\label{eq:operatornorm}
\begin{aligned}
{n}^{-1/q} \esssup_{y\in Y} \lnorm \labs K_{T}(y,\dummy )\rabs\rnorm_{L^{q}(X)}
&\le \lnorm T\rnorm_{L^{q'}(\FE) \to L^\infty(\FF)} \\
&\le\esssup_{y\in Y} \lnorm \labs K_{T}(y,\dummy )\rabs\rnorm_{L^{q}(X)}.
\end{aligned}
\end{equation}

Much as in \cite{CS}, for a Borel function $F$ supported in $[0,1]$, we define the norm  $\lnorm F\rnorm_{N,2}$ by the formula
\[
\lnorm F\rnorm_{N,2} 
= \biggl(\frac{1}{N}\sum_{i=1}^{N}  \sup_{\lambda\in [\frac{i-1}{N},\frac{i}{N}]}  \labs F(\lambda)\rabs^2\biggr)^{1/2},
\]
where $p \in  [1,\infty)$ and $N\in \Z_+$.
Now we can reformulate Theorem~3.6 of \cite{CS}.
In this statement, and elsewhere, the letter $C$ and variants such as $C_\ell$ denote constants, always assumed to be positive, which may vary from one occurrence to the next.
The expressions $a \simeq b$ and $a \lesssim b$ mean that there are constants $C$ and $C'$ such that $Ca \leq b \leq C'b$ and $a \leq C b$ respectively.

\begin{theorem}\label{thm:abstractmult}
Let $(X,\dist,\mu)$ be a bounded metric measure space, equipped with a weight function $\varpi\colon X \times X \to \R_+$, and let $d \in [1,\infty)$.
Let $\FE$ be a continuous vector bundle on $X$ with measurable inner product and $\AbsOp$ be a possibly unbounded positive self-adjoint operator with dense domain on $L^2(\FE)$. 
Suppose that the following hypotheses are verified:
\begin{enumerate}[(i)]
\item the doubling condition:
\[
\mu(B(x,2t)) \le C \,\mu(B(x,t)) \qquad\forall{x\in X} \quad\forall  t>0;
\]
\item the weighted estimate for balls:
\[
\int_{B(x,t)} \varpi(x,y)^{-1}\, d\mu(y) \le C \min(t^d,1);
\]
\item Sobolev-type estimates: for some sufficiently large integer $\ell$:
\[
\mu(B(x,t))^{1/2} \,\lnorm(1+t^2{\AbsOp})^{-\ell}\rnorm_{L^2(\FE) \to L^\infty(\FE)}\le C_\ell
\qquad\forall x \in X \quad\forall t\in \R_+;
\]
\item finite propagation speed:
\[
\supp \cos(t\sqrt{\AbsOp}) \alpha \subseteq \lset x \in X \colon \dist(x,\supp \alpha) \leq t\rset
\qquad\forall t \in \R_+ \quad\forall \alpha \in L^2(\FE);
\]
\item Plancherel-type estimates:
\[
\esssup_{y \in X} \biggl(\int_X \labs  K_{F(\sqrt{\AbsOp})}(x,y)\rabs^2 \, \varpi(x,y) \,d \mu(x) \biggr)^{1/2} 
\leq C \, N^{d/2} \lnorm F(N \dummy ) \rnorm_{N,2}
\]
for all $N \in \N$ and Borel functions $F$ such that $\supp F \subseteq [0,N]$.
\end{enumerate}
Finally, assume that $s>d/2$.
Then for all bounded Borel functions $F : \R \to \C$ such that
\[
\sup_{t\in\R_+}
\lnorm F(t\dummy ) \, \eta \rnorm_{H^s(\R)} < \infty,
\]
the operator $F(\sqrt{\AbsOp})$ is of weak type~$(1,1)$ and of strong type $(p,p)$ for all $p$ in~$(1,\infty)$; further, the associated operator norms are bounded by multiples of
\[
\sup_{t\in\R_+} \lnorm F(t \dummy ) \, \eta \rnorm_{H^s(\R)}
        + \labs  F(0) \rabs .
\]
\end{theorem}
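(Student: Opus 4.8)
The plan is to obtain Theorem~\ref{thm:abstractmult} as the vector-bundle counterpart of Theorem~3.6 of \cite{CS}, transcribing the Calderón--Zygmund argument of that paper with the fibre norms $\labs\dummy\rabs_x$ in place of the scalar absolute value; as remarked in \cite{Sik}, this substitution requires no essential change. For completeness, here is the structure of the argument. The strong $(2,2)$ bound for $F(\sqrt{\AbsOp})$ is immediate from the spectral theorem, together with $\lnorm F\rnorm_\infty \lesssim \sup_{t}\lnorm F(t\dummy)\eta\rnorm_{H^s} + \labs F(0)\rabs$ (a Sobolev embedding, using $s > d/2 \geq 1/2$). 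The core of the proof is the weak type $(1,1)$ bound with operator norm controlled by $\sup_{t}\lnorm F(t\dummy)\eta\rnorm_{H^s} + \labs F(0)\rabs$: granting this, the strong $(p,p)$ bounds for $1 < p < 2$ follow by Marcinkiewicz interpolation with the $L^2$ bound, and those for $2 < p < \infty$ by duality, since $\overline{F}$ satisfies the same hypotheses.

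For the weak type $(1,1)$ estimate I would run a Calderón--Zygmund decomposition, which, given the doubling condition~(i) and the $L^2$ bound, reduces the matter to verifying the integrated Hörmander regularity estimate
\[
\int_{X \setminus B(y,2r)} \biglabs K_{F(\sqrt{\AbsOp})}(x,y') - K_{F(\sqrt{\AbsOp})}(x,y)\bigrabs_{\HS} \, d\mu(x) \lesssim \sup_{t}\lnorm F(t\dummy)\eta\rnorm_{H^s}
\]
uniformly over $y \in X$, $r > 0$ and $y' \in B(y,r)$. One decomposes $F = F(0) + \sum_{k \geq 0} F_k$ dyadically, with $F_k$ supported where $\sqrt{\AbsOp} \simeq 2^k$, so that $\lnorm F_k(2^k\dummy)\rnorm_{H^\ell} \lesssim \sup_{t}\lnorm F(t\dummy)\eta\rnorm_{H^s}$ for $\ell \leq s$. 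The key mechanism, exactly as in \cite{CS}, is to combine the finite propagation speed~(iv) with the Plancherel estimate~(v): representing $F_k(\sqrt{\AbsOp})$ by Fourier inversion as a superposition of the wave propagators $\cos(t\sqrt{\AbsOp})$, whose kernels are supported in $\{\dist(x,y) \leq \labs t\rabs\}$, one converts powers of $\dist(x,y)$ on the kernel side into derivatives on the multiplier side and thereby upgrades~(v) to a weighted Plancherel estimate carrying the extra factor $(1 + 2^k\dist(x,y))^{2\ell}$ at the cost of $\lnorm F_k(2^k\dummy)\rnorm_{H^\ell}$. Estimating the $L^1_x$-norm of $K_{F_k(\sqrt{\AbsOp})}(\dummy,y)$ over the dyadic shell $\dist(x,y) \simeq 2^{j-k}$ by the Cauchy--Schwarz inequality, the weighted ball estimate~(ii) contributes $\bigl(\int_{B(y,2^{j-k})}\varpi(x,y)^{-1}\,d\mu(x)\bigr)^{1/2} \lesssim 2^{(j-k)d/2}$, which together with the $2^{kd/2}$ from the weighted form of~(v) and the factor $2^{-j\ell}$ from the weight leaves a geometric series in $j$. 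The sum over $k$ is then handled by splitting at $2^k \simeq 1/r$: for $2^k \lesssim 1/r$ one uses the difference $K(x,y') - K(x,y)$, which gains a factor $\simeq r\,2^k$, and the resulting series telescopes to $O(1)$; for $2^k \gtrsim 1/r$ the near-diagonal portion of $K_{F_k(\sqrt{\AbsOp})}$, concentrated at scale $2^{-k} \lesssim r$, lies inside $B(y,2r)$ and drops out, so only the tail contributes, and by the weighted Plancherel estimate it decays like $(r\,2^k)^{-(\ell - d/2)}$, which is summable because $\ell > d/2$. Interpolating between $\ell = \lceil d/2\rceil$ and the trivial bound recovers the sharp threshold $s > d/2$.

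The $F(0)$ term is handled separately: since $X$ is bounded, $F(0)$ times the spectral projection of $\AbsOp$ at $0$ is bounded on every $L^p(\FE)$ by $\lesssim \labs F(0)\rabs$, which accounts for the additive $\labs F(0)\rabs$ in the conclusion (and in the application to $\boxb$ on $(0,j)$-forms with $0 < j < n-1$ this projection vanishes, as $\boxb$ is then injective, so that term disappears). The main obstacle is not any single step but the bookkeeping in the previous paragraph: one must track precisely how the Sobolev order $s$, the dimension $d$ appearing in~(ii) and~(v), and the truncation scale of the wave representation fit together so that both the size series and the regularity series converge for every $s > d/2$. The only genuinely new points in passing from scalar functions to sections of $\FE$ are that finite propagation speed is now imposed as a hypothesis~(iv) rather than deduced from the locality of a differential operator, and that one works throughout with the fibrewise Hilbert--Schmidt norm of the kernel, using $\labs K\rabs \leq \labs K\rabs_{\HS} \leq \min(m,n)^{1/2}\labs K\rabs$ to pass between it and the pointwise fibre norms $\labs\dummy\rabs_x$; with these substitutions the argument of \cite{CS} goes through verbatim.
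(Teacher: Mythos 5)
Your proposal is correct and matches the paper's treatment: the paper itself does not prove Theorem~\ref{thm:abstractmult} but simply restates Theorem~3.6 of \cite{CS}, noting (as you do, via \cite{Sik}) that the passage from scalar functions to sections of a vector bundle is a routine modification once the kernel conventions and fibre norms are set up. Your sketch of the underlying Calder\'on--Zygmund argument of \cite{CS} --- dyadic decomposition of the multiplier, finite propagation speed combined with the Plancherel estimate to produce weighted kernel bounds, and interpolation/duality for the strong $(p,p)$ bounds --- is an accurate account of that proof.
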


It is perhaps worth noting that Hypotheses (iii) and (iv) amount to ``on-diagonal'' and ``off-diagonal Gaussian'' estimates for the heat kernel associated to $\boxb$.

An inspection of the proof of the above theorem (see, in particular, page 26 of \cite{CS}), shows that an $L^1$-boundedness result may be obtained under similar assumptions in the case of compactly supported multipliers.

\begin{theorem}\label{thm:abstractmult2}
Assume Hypotheses (i)  to (v) of the previous theorem.
If $s > d/2$, then for all $t \in \R_+$ and all bounded Borel functions $F : \R \to \C$ with $\supp F \subseteq [-1,1]$,
\[
\esssup_{y \in X} \int_X \labs K_{F(t \sqrt{\AbsOp})}(x,y)\rabs \, dx  \leq C_s \lnorm F \rnorm_{H^s(\R)};
\]
consequently, if $\lnorm F \rnorm_{H^s(\R)} < \infty$, then for all $p \in [1,\infty]$ the operator $F(t\sqrt{\AbsOp})$ is bounded on $L^p(\FE)$, uniformly in $t \in \R_+$, and
\[
\sup_{t \in \R_+} \lnorm F(t\sqrt{\AbsOp})\rnorm_{L^p(\FE)\to L^p(\FE)} \leq C_s \lnorm F\rnorm_{H^s(\R)}.
\]
\end{theorem}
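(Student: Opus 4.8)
The plan is to follow the scheme of the proof of Theorem~\ref{thm:abstractmult} (Theorem~3.6 of \cite{CS}), where precisely this kind of $L^1$ kernel bound for compactly supported multipliers appears as the core estimate (cf.\ page~26 of \cite{CS}). Fix $t \in \R_+$ and a bounded Borel function $F$ with $\supp F \subseteq [-1,1]$ and $\|F\|_{H^s(\R)} < \infty$. By Hypothesis~(iii) (together with the fact that $\lambda \mapsto F(\lambda)(1+\lambda^2)^\ell$ is bounded, $\supp F$ being compact), the operator $F(t\sqrt{\AbsOp})$ maps $L^2(\FE)$ into $L^\infty(\FE)$, and hence is an integral operator with a bounded kernel $K_{F(t\sqrt{\AbsOp})}$; alternatively, the existence of the kernel will also follow from the convergence of the series below. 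I would then split $F = \sum_{k \ge 0} F_k$ by a Littlewood--Paley decomposition on the Fourier side, so that $\widehat{F_k}$ is supported in $\{ |s| \simeq 2^k \}$ for $k \ge 1$ and in $\{ |s| \lesssim 1 \}$ for $k = 0$, and record the standard consequences $\|F_k'\|_{L^2} \lesssim 2^k \|F_k\|_{L^2}$, $\|F_k\|_\infty \lesssim 2^{k/2}\|F_k\|_{L^2}$ and $\sum_{k \ge 0} 2^{2ks}\|F_k\|_{L^2}^2 \simeq \|F\|_{H^s(\R)}^2$.

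For each $k$, the function $\lambda \mapsto F_k(t\lambda)$ has Fourier transform supported in $\{|s| \lesssim 2^k t\}$, so, expressing $F_k(t\sqrt{\AbsOp})$ through the Fourier representation in terms of wave propagators and invoking finite propagation speed (Hypothesis~(iv), after splitting $F_k$ into its even and odd parts, or using that $e^{is\sqrt{\AbsOp}}$ inherits finite propagation speed from $\cos(s\sqrt{\AbsOp})$), the kernel $K_{F_k(t\sqrt{\AbsOp})}(\dummy, y)$ is supported in the ball $B(y, C2^k t)$, for each $y$. Then, by the Cauchy--Schwarz inequality, the weighted volume estimate (Hypothesis~(ii)) and the weighted Plancherel estimate (Hypothesis~(v), applied with $N \simeq \max(1/t, 1)$, so that $\supp F_k(t\dummy) \subseteq [0, N]$),
\begin{align*}
\int_X |K_{F_k(t\sqrt{\AbsOp})}(x,y)| \, d\mu(x)
&= \int_{B(y, C2^k t)} |K_{F_k(t\sqrt{\AbsOp})}(x,y)| \, d\mu(x) \\
&\le \biggl( \int_X |K_{F_k(t\sqrt{\AbsOp})}(x,y)|^2 \, \varpi(x,y) \, d\mu(x) \biggr)^{1/2} \biggl( \int_{B(y, C2^k t)} \varpi(x,y)^{-1} \, d\mu(x) \biggr)^{1/2} \\
&\lesssim N^{d/2} \, \|F_k(tN\dummy)\|_{N,2} \, \min\bigl( (2^k t)^{d/2}, 1 \bigr).
\end{align*}
Using the elementary inequality $\|G\|_{N,2}^2 \lesssim \|G\|_{L^2}^2 + N^{-1}\|G\|_{L^2}\|G'\|_{L^2}$ for $\supp G \subseteq [0,1]$, together with $tN \simeq 1$ when $t \le 1$ (and directly $\|F_k(t\dummy)\|_{1,2} = \|F_k\|_\infty$ when $t \ge 1$), a short case analysis in the regimes $2^k t \le 1$ and $2^k t > 1$ shows that the right-hand side above is $\lesssim 2^{kd/2}\|F_k\|_{L^2}$, uniformly in $t$ and $y$. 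Summing over $k$ and applying the Cauchy--Schwarz inequality in $k$ gives
\[
\esssup_{y \in X} \int_X |K_{F(t\sqrt{\AbsOp})}(x,y)| \, d\mu(x) \lesssim \sum_{k \ge 0} 2^{kd/2} \|F_k\|_{L^2} \lesssim \biggl( \sum_{k \ge 0} 2^{k(d - 2s)} \biggr)^{1/2} \|F\|_{H^s(\R)},
\]
where the geometric series converges precisely because $s > d/2$; this is the asserted kernel estimate.

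Finally, since $\AbsOp$ is self-adjoint, $F(t\sqrt{\AbsOp})^* = \overline{F}(t\sqrt{\AbsOp})$, and the kernel of the adjoint at $(y,x)$ is the fibrewise adjoint of $K_{F(t\sqrt{\AbsOp})}(x,y)$; applying the bound just proved to $\overline{F}$ (and noting $\|\overline{F}\|_{H^s} = \|F\|_{H^s}$) therefore also gives $\esssup_{x \in X} \int_X |K_{F(t\sqrt{\AbsOp})}(x,y)| \, d\mu(y) \lesssim \|F\|_{H^s(\R)}$. Schur's test then yields boundedness of $F(t\sqrt{\AbsOp})$ on $L^1(\FE)$ and on $L^\infty(\FE)$ with norm $\lesssim \|F\|_{H^s(\R)}$, and interpolation covers $1 < p < \infty$, all uniformly in $t$. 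I expect the one genuinely delicate point to be the bookkeeping hidden in the case analysis: one has to check that the gain $\min((2^k t)^{d/2}, 1)$ from the volume of the ball carrying the kernel exactly absorbs the loss $N^{d/2} \simeq \max(1/t,1)^{d/2}$ from the Plancherel estimate and the $k$-dependence of $\|F_k(tN\dummy)\|_{N,2}$, in every regime of $t$ and $k$, so that the final sum over $k$ converges at exactly the critical index $s = d/2$; the reduction to the wave propagators for non-even $F$ is routine.
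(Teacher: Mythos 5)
Your proposal follows exactly the route the paper intends: the paper gives no argument of its own for this theorem beyond pointing to page~26 of \cite{CS}, and what you reconstruct --- dyadic decomposition of $F$ on the Fourier-transform side, finite propagation speed to localize $K_{F_k(t\sqrt{\AbsOp})}(\dummy,y)$ in $B(y,C2^kt)$, Cauchy--Schwarz against the weight $\varpi$ using Hypotheses (ii) and (v), and summation over $k$ under $s>d/2$ --- is that argument. I also checked your case analysis: with $N\simeq\max(1/t,1)$ the bound $N^{d/2}\lnorm F_k(tN\dummy)\rnorm_{N,2}\min((2^kt)^{d/2},1)\lesssim 2^{kd/2}\lnorm F_k\rnorm_{L^2}$ does hold in each of the regimes $2^kt\le 1$, $2^kt>1\ge t$, and $t\ge 1$, and the final Cauchy--Schwarz in $k$ then needs precisely $s>d/2$. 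The Schur-test/adjoint/interpolation endgame is also fine.

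There is, however, one step that fails as written: you invoke Hypothesis (v) ``so that $\supp F_k(t\dummy)\subseteq[0,N]$'', while simultaneously using that $\widehat{F_k}$ is supported in $\lset |s|\simeq 2^k\rset$. A nonzero function cannot have both compact support and compactly supported Fourier transform, so the Littlewood--Paley pieces $F_k$ are \emph{not} supported in $[-2,2]$ and Hypothesis (v) cannot be applied to $F_k(t\dummy)$ verbatim. The standard repair (carried out in \cite{CS} and \cite{DOS}) is to write $F_k=F_k\chi+F_k(1-\chi)$ with $\chi$ a cutoff equal to $1$ on a neighbourhood of $[-1,1]$: Hypothesis (v) applies to $F_k\chi$, and one checks that $\lnorm (F_k\chi)(tN\dummy)\rnorm_{N,2}$ obeys the same bounds you derived; the tail $F_k(1-\chi)$ is handled by the rapid decay $\labs F_k(\lambda)\rabs\lesssim_M 2^{k/2}(1+2^k\operatorname{dist}(\lambda,\supp F))^{-M}\lnorm F\rnorm_{L^2}$ together with a crude $L^1$--$L^2$ bound on its kernel (using that $\mu(X)<\infty$ and an unweighted Plancherel estimate, which follows from Hypothesis (iii) or from (v) with the trivial weight). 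Note that finite propagation speed is still legitimately available for the \emph{whole} $F_k$, since that only uses the compact support of $\widehat{F_k(t\dummy)}$. With this truncation inserted (and the routine even/odd reduction you already flag), the proof is complete.
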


\section{The tangential Cauchy--Riemann complex on the sphere}\label{section:complex}

Fix $n \geq 2$.
As a real hypersurface in $\C^n$, the unit sphere $\Sphere$ is naturally endowed with a CR structure (of hypersurface type). 
Namely, let $\C T \C^n = T_{1,0} \C^n \oplus T_{0,1} \C^n$ be the decomposition of the complexified tangent bundle of $\C^n$ into its holomorphic and antiholomorphic components. 
Then the definition
\[
\CRL = \C T \Sphere \cap T_{1,0} \C^n
\]
gives an involutive subbundle $\CRL$ of rank $n-1$ of the complexified tangent bundle $\C T \Sphere$; moreover $\Bar{\CRL} = \C T \Sphere \cap T_{0,1} \C^n$ and
\[
\Bar{\CRL}_w \cap \CRL_w = \lset 0\rset
\qquad\forall w \in \Sphere.
\]

The dual bundle $\Bar{\CRL}^*$ of $\Bar{\CRL}$ is identified with a subbundle of the complexified cotangent bundle $\C T^* \Sphere$ via the standard Riemannian metric on $T \Sphere$ induced by $\C^n$.
Correspondingly the $j$th exterior power $\Lambda^{0,j} = \Lambda^j \Bar{\CRL}^*$ may be identified with a subbundle of the bundle $\C \Lambda^j \Sphere = \Lambda^j \C T^* \Sphere$ of $j$-forms on $\Sphere$; in particular, the space of sections of $\Lambda^{0,j}$ may be viewed as a subspace of the space of $j$-forms on $\Sphere$, and the fibrewise orthogonal projection defines a bundle morphism $\pi_j : \C \Lambda^j \Sphere \to \Lambda^{0,j}$. 
Consequently the definition
\[
\dbarb \alpha = \pi_{j+1} d \alpha,
\]
where $\alpha$ is a section of $\Lambda^{0,j}$ and $d$ is the exterior derivative, gives rise to a first-order differential operator $\dbarb : C^\infty(\Lambda^{0,j}) \to C^\infty(\Lambda^{0,j+1})$. 
One may prove that $\dbarb^2 = 0$ (see, for example, \cite[Section 8.2]{Boggess}) and the complex that arises, namely,
\[
0 \longrightarrow C^\infty(\Lambda^{0,0}) \stackrel{\dbarb}{\longrightarrow} C^\infty(\Lambda^{0,1}) \stackrel{\dbarb}{\longrightarrow} \dots \stackrel{\dbarb}{\longrightarrow} C^\infty(\Lambda^{0,n-2}) \stackrel{\dbarb}{\longrightarrow} C^\infty(\Lambda^{0,n-1}) \longrightarrow 0,
\]
is known as the tangential Cauchy--Riemann complex on $\Sphere$.
Since the group $\group{U}(n)$ acts on the sphere $\Sphere$ via restrictions of maps which are both isometric and holomorphic, the action of $\group{U}(n)$ preserves the Riemannian metric, the standard surface measure $\sigma$, the CR structure $\Bar{\CRL}$, and the corresponding complex, and $\dbarb$ is $\group{U}(n)$-equivariant.

Associated to this complex, we define the formal adjoint $\dbarba$ of $\dbarb$ with respect to the inner product
\[
\llangle \alpha, \beta \rrangle = \int_{\Sphere} \lip \alpha(z), \beta(z) \rip \,d\sigma(z),
\]
and the second-order operator $\boxb$ by
\[
\boxb = (\dbarb + \dbarba)^2 = \dbarb \dbarba + \dbarba \dbarb.
\]
The maps $\dbarba$ and $\boxb$ are also $\group{U}(n)$-equivariant.
We set $\Forms{j} = L^2(\Lambda^{0,j})$; equivalently, $\Forms{j}$ is the Hilbert space completion of $C^\infty(\Lambda^{0,j})$ with respect to this inner product.

The Riemannian distance on $\Sphere$ is not appropriate for analysis of operators such as $\dbarb$, $\dbarba$ and $\boxb$.
To these operators we may associate a control distance $\dist_0$ (see, for example, \cite[Section 8.4]{CM}).
This distance is $\sqrt{2}$ times the subriemannian distance on $\Sphere$ defined by taking as horizontal distribution $H$ the Levi distribution $(\CRL \oplus \Bar{\CRL}) \cap T \Sphere$ and endowing it with the restriction of the Riemannian inner product on $T\Sphere$. 
Note that
\[
H_w = \lset z \in \C^n \colon \lip z, w \rip = 0 \rset 
\subseteq \lset z \in \C^n \colon \Re \lip z, w \rip = 0 \rset = T_w \Sphere
\]
for all $w \in \Sphere$, where $\lip \dummy ,\dummy  \rip$ denotes the usual Hermitian inner product on $\C^n$, that is,
\[
\lip z,w \rip = \sum_{m=1}^n z_m \Bar{w}_m,
\]
and $H$ and hence also $\dist_0$ are $\group{U}(n)$-invariant.

Since the CR manifold $\Sphere$ is strictly pseudoconvex (that is, the Levi form is non\-degenerate, see \cite[Chapter 10]{Boggess}), the distribution $H$ is bracket-generating of step~$2$. 
Hence, by Chow's theorem, the associated subriemannian distance is finite and induces the standard topology on $\Sphere$.
Moreover the metric space $(\Sphere,\dist_0)$ is compact and hence complete, so the operator $\boxb$ is essentially self-adjoint and satisfies the finite propagation speed property
\begin{equation}\label{eq:fps}
\supp \cos(t\sqrt{\boxb}) f \subseteq \lset x \in \Sphere \colon \dist_0(x,\supp f) \leq t\rset
\qquad\forall t \in \R_+ \quad\forall f \in \Forms{j}
\end{equation}
(see \cite[Section 7]{CM}).

An explicit expression for the control distance $\dist_0$ is difficult to obtain or to work with (see \cite{CMV,BaW}), and so it is convenient to use an equivalent distance $\dist$, given by
\begin{equation}\label{eqn:defn-of-dist}
\dist(z,w) = 2  \, \labs  1 - \lip z,w \rip \rabs^{1/2}
\end{equation}
for all $z,w \in \Sphere$; this distance is evidently $\group{U}(n)$-invariant. 
For more on $\dist$, including a proof of the triangle inequality, see, for instance, \cite[Section 5.1]{Rudin}.

\begin{proposition}
The distance functions $\dist_0$ and $\dist$ are equivalent; more precisely, there is a constant $C$ such that
\[
\dist(z,w) \leq \dist_0(z,w) \leq C \dist(z,w)
\qquad\forall w, z \in \Sphere.
\]
\end{proposition}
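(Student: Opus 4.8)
The plan is to establish the two inequalities by quite different means: the lower bound $\dist(z,w) \le \dist_0(z,w)$ is a soft consequence of differentiating $\lip\gamma(t),w\rip$ along a horizontal curve $\gamma$, whereas the upper bound $\dist_0(z,w) \le C\dist(z,w)$ is the genuinely subriemannian ingredient and requires building short horizontal paths between nearby points.

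\emph{The lower bound.} Since $\dist_0 = \sqrt{2}\,d$, where $d$ denotes the subriemannian distance associated with the Levi distribution $H$ and the induced Riemannian metric, it suffices to show that $|1 - \lip z,w\rip| \le \tfrac12\, d(z,w)^2$ for all $z,w$. Fix $w$, let $\gamma\colon[0,L]\to\Sphere$ be an arc-length-parametrised horizontal curve from $w$ to $z$, and put $f(t) = \lip\gamma(t),w\rip$, so $f(0)=1$ and $f(L)=\lip z,w\rip$. Writing $w = \overline{f(t)}\,\gamma(t) + w_t^\perp$ with $w_t^\perp$ Hermitian-orthogonal to $\gamma(t)$, and using that horizontality of $\gamma$ means $\lip\gamma'(t),\gamma(t)\rip = 0$, one gets $f'(t) = \lip\gamma'(t),w_t^\perp\rip$, whence
\[
|f'(t)| \le |\gamma'(t)|\,|w_t^\perp| = \bigl(1-|f(t)|^2\bigr)^{1/2} \le \bigl(2\,|1-f(t)|\bigr)^{1/2}.
\]
Thus $g := 1-f$ satisfies $g(0)=0$ and $|g'|\le\sqrt{2}\,|g|^{1/2}$; comparing $|g|$ with the maximal solution of $y'=\sqrt{2}\,y^{1/2}$, $y(0)=0$ (regularising the non-Lipschitz right-hand side by taking $y(0)=\varepsilon>0$ and letting $\varepsilon\to0$) yields $|g(t)|\le t^2/2$, so $|1-\lip z,w\rip| = |g(L)| \le L^2/2$. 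Taking the infimum over all such $\gamma$ gives $|1-\lip z,w\rip|\le d(z,w)^2/2$, i.e. $\dist(z,w)^2 = 4\,|1-\lip z,w\rip| \le 2\,d(z,w)^2 = \dist_0(z,w)^2$.

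\emph{The upper bound.} Both $\dist$ and $\dist_0$ are $\group{U}(n)$-invariant and continuous, $\group{U}(n)$ acts transitively on $\Sphere$, and $\Sphere$ is compact; since away from any fixed neighbourhood of the diagonal the two distances are bounded above and below by positive constants, it is enough to bound $\dist_0(z,e_n)$ by a multiple of $\dist(z,e_n)$ for $z$ in a small neighbourhood of the pole $e_n = (0,\dots,0,1)$. For such $z$, write $z = (\zeta,(1-|\zeta|^2)^{1/2}e^{i\phi})$ with $\zeta\in\C^{n-1}$ and $\phi$ small, and join $e_n$ to $z$ by a horizontal path in two stages: first the arc $s\mapsto(s\,\zeta/|\zeta|,(1-s^2)^{1/2})$, $s\in[0,|\zeta|]$, which one checks is horizontal and has length $\arcsin|\zeta|\lesssim|\zeta|$; then a path keeping the modulus of the last coordinate essentially fixed while sweeping out the phase increment $\phi$ by running part-way around a circle of radius $\simeq|\phi|^{1/2}$ in a coordinate $\C$-plane, which — because $H$ is bracket-generating of step two — has length $\lesssim|\phi|^{1/2}$. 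Hence $\dist_0(z,e_n)\lesssim|\zeta|+|\phi|^{1/2}$. On the other hand $\Im\lip z,e_n\rip = (1-|\zeta|^2)^{1/2}\sin\phi\simeq\phi$ and $\Re\bigl(1-\lip z,e_n\rip\bigr) = 1-(1-|\zeta|^2)^{1/2}\cos\phi\gtrsim|\zeta|^2+\phi^2$ for $\zeta,\phi$ small, so $|1-\lip z,e_n\rip|\gtrsim\max(|\zeta|^2,|\phi|)$ and therefore $\dist(z,e_n) = 2\,|1-\lip z,e_n\rip|^{1/2}\gtrsim|\zeta|+|\phi|^{1/2}\gtrsim\dist_0(z,e_n)$. (Alternatively one may transport the problem to the Heisenberg group via the Cayley transform, which is a local CR diffeomorphism, and invoke the classical comparison of the Carnot--Carath\'eodory distance with the Kor\'anyi gauge there.)

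The routine parts are the differential-inequality comparison in the first step and the length bookkeeping for the two explicit arcs; the substantive point, and the main obstacle, is the phase-adjusting stage of the upper bound — producing displacement in the direction transverse to $H$ by a closed horizontal loop whose length is only the \emph{square root} of the displacement. This is exactly where the step-two bracket-generating property of the Levi distribution enters, and it is the reason the correct companion for $\dist_0$ is the gauge-type distance $\dist$ rather than the Riemannian one.
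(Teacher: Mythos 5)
Your proof is correct and reaches the same sharp conclusion, but it is organised differently from the paper's. For the inequality $\dist\le\dist_0$ the paper computes the subriemannian gradient bound $\labs\dbarb\dist(\dummy,w)\rabs\le 1$ and then invokes Proposition 5.4 of [CM], which converts such an eikonal estimate into a distance comparison; your argument instead integrates the same information along an arbitrary horizontal curve, turning $\lip\gamma'(t),\gamma(t)\rip=0$ and Cauchy--Schwarz into the differential inequality $\labs g'\rabs\le\sqrt2\,\labs g\rabs^{1/2}$ for $g=1-\lip\gamma(t),w\rip$ and concluding by ODE comparison. This is the self-contained form of exactly the mechanism hidden in the cited proposition, and it correctly produces the constant $1$ (your bookkeeping of the factor $\sqrt2$ in $\dist_0=\sqrt2\,d$ checks out). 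For the reverse inequality the paper simply quotes the ball--box theorem to get $\dist_0(z,w)\simeq\labs z'\rabs+\labs 1-\lip z,w\rip\rabs^{1/2}$ near the pole, whereas you build the horizontal paths by hand: the radial arc is verified to be horizontal and of length $\arcsin\labs\zeta\rabs$, and the lower bound $\dist(z,e_n)\gtrsim\labs\zeta\rabs+\labs\phi\rabs^{1/2}$ is a correct elementary computation. The one place where your write-up is an assertion rather than a proof is the phase-sweeping stage: "a circle of radius $\simeq\labs\phi\rabs^{1/2}$ ... has length $\lesssim\labs\phi\rabs^{1/2}$" is precisely the quantitative content of the ball--box theorem and does not follow from bracket generation alone without exhibiting the loop (e.g.\ the $\group{U}(2)$-orbit circle $s\mapsto(\cos s\,e_1+\text{(phase)}\sin s\,e_n$-type loop, or the Cayley-transform route you mention). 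So the trade-off is: the paper's proof is shorter because it outsources both hard steps to cited results, while yours is essentially from first principles and makes the sharp constant transparent, at the cost of one remaining construction that should be written out or explicitly referred back to [NSW] or [Bellaiche].
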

\begin{proof}
As the distances $\dist$ and $\dist_0$ are $\group{U}(n)$-invariant, we may suppose temporarily without loss of generality that $w = (1, 0, \dots, 0)$; it is then convenient to write $z = (z_1, z')$, where $z' = (z_2, \dots, z_n)$.
Now 
\begin{equation}\label{eqn:distance-comparison}
\dist_0(z, w) \simeq \labs z'\rabs + \labs z_1\rabs^{1/2} \simeq \dist(z, w) 
\end{equation}
for all $z \in \Sphere$; the left-hand equivalence follows from the ball-box theorem (see \cite{NSW} or  \cite[Theorem 7.34]{Bellaiche}), while the right-hand equivalence follows from the definition \eqref{eqn:defn-of-dist} by computation.

It remains to show that $\dist(z,w) \leq \dist_0(z,w) $, that is, to determine one of the constants in the equivalence.
First of all, we take $z = (1, 0, \dots, 0)$ and show that $\labs \dbarb \dist(z,w) \rabs \leq 1$ for all $w \in \Sphere \setminus\lset z\rset$, where $\dbarb$ acts in the first variable.  
Observe that, for this $z$,
\begin{equation*}
\dbarb \dist(z,w) 
= \sum_{m=2}^n \frac{\partial \dist(z,w)}{\partial \Bar{z}_m} \,d\Bar{z}_m  \,;
\end{equation*}
since $\labs  d\Bar{z}_m \rabs = \sqrt{2}$, it follows that
\[
\begin{aligned}
\labs \dbarb \dist(z,w) \rabs
&= \sqrt{2} \biggl(\sum_{m=2}^{n} \labs\frac{\partial}{\partial \Bar{z}_m} 2 (1 - \lip w,z \rip) ^{1/4} (1 - \lip z,w \rip)^{1/4} \rabs^2 \biggr)^{1/2} \\
&= \frac{1}{\sqrt{2}} \biggl( \sum_{m=2}^{n} \frac{ \labs w_m\rabs^2}{ \labs 1 - \lip z,w \rip\rabs } \biggr)^{1/2} \\
&= \frac{1}{\sqrt{2}} \left( \frac{ (1 - \labs w_1\rabs)(1+ \labs w_1\rabs) }{ \labs 1 - w_1\rabs } \right)^{1/2} 
\leq 1 .
\end{aligned}
\]	
This inequality holds for all $z \in \Sphere$ and all $w \in \Sphere \setminus \lset z\rset$ by $\group{U}(n)$-invariance.
It follows from \cite[Proposition 5.4]{CM} that $\dist({}\cdot{}, w) \leq \dist_0({}\cdot{}, w)$.
\end{proof}

The following result is an immediate consequence of this distance function comparison and \eqref{eq:fps2}.

\begin{corollary}
The operator $\boxb$ has the finite propagation speed property relative to $\dist$:	
\begin{equation}\label{eq:fps2}
\supp \cos(t\sqrt{\boxb}) f \subseteq \lset x \in \Sphere \colon \dist(x,\supp f) \leq t\rset
\qquad\forall t \in \R_+ \quad\forall f \in \Forms{j} .
\end{equation}
\end{corollary}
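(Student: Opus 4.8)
The plan is to obtain this corollary as a purely formal consequence of the subriemannian finite propagation speed property \eqref{eq:fps} together with the comparison of the two distances established in the preceding Proposition. The only thing that needs to be verified is an inclusion of metric neighbourhoods of $\supp f$, and for this it is the lower bound $\dist \leq \dist_0$ that matters.

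First I would note that the pointwise bound $\dist(z,w) \leq \dist_0(z,w)$ extends to distances from subsets: for any nonempty $S \subseteq \Sphere$ and any $x \in \Sphere$,
\[
\dist(x,S) = \inf_{w \in S} \dist(x,w) \leq \inf_{w \in S} \dist_0(x,w) = \dist_0(x,S).
\]
Consequently, for every $t \in \R_+$,
\[
\lset x \in \Sphere \colon \dist_0(x,S) \leq t \rset \subseteq \lset x \in \Sphere \colon \dist(x,S) \leq t \rset .
\]
Applying this with $S = \supp f$ and inserting it into \eqref{eq:fps}, I would conclude that, for every $f \in \Forms{j}$ and every $t \in \R_+$,
\[
\supp \cos(t\sqrt{\boxb}) f
\subseteq \lset x \in \Sphere \colon \dist_0(x,\supp f) \leq t \rset
\subseteq \lset x \in \Sphere \colon \dist(x,\supp f) \leq t \rset ,
\]
which is exactly \eqref{eq:fps2}.

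I do not expect any genuine obstacle here: all of the analytic content — essential self-adjointness of $\boxb$, completeness of $(\Sphere,\dist_0)$, and the wave-equation support estimate — has already been used to establish \eqref{eq:fps}, and the remaining step is elementary. The one subtlety worth flagging is that it is the \emph{lower} bound $\dist \leq \dist_0$, and not the upper bound $\dist_0 \leq C\dist$, that is used: passing to a smaller distance enlarges balls, so finite propagation speed with respect to $\dist_0$ is formally the stronger of the two statements, and the inequality must be invoked in the correct direction.
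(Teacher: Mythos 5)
Your argument is correct and is exactly the ``immediate consequence'' the paper intends: the bound $\dist \leq \dist_0$ from the preceding Proposition shows the $\dist_0$-neighbourhood of $\supp f$ is contained in the $\dist$-neighbourhood, and composing with \eqref{eq:fps} gives \eqref{eq:fps2}. You are also right that only the lower bound $\dist \leq \dist_0$ is needed, and your direction of the inequality is the correct one.
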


\section{Harmonic analysis of forms}

Recall that the differential operators $\dbarb$, $\dbarba$, and $\boxb$ are $\group{U}(n)$-equivariant on the various spaces of forms on the sphere, that is, they are intertwining operators between the natural representations of $\group{U}(n)$ on the various spaces $\Forms{j}$.
Therefore it is possible to study the spectral properties of these operators through the unitary representation theory of $\group{U}(n)$.

This route was followed by Folland in \cite{Fo}. 
Recall that irreducible unitary representations of $\group{U}(n)$ may be parametrized (modulo equivalence) by nonincreasing $n$-tuples $(\ell_1,\dots,\ell_n) \in \Z^n$; as in \cite{Fo} we will denote the corresponding representation by $\rho(\ell_1,\dots,\ell_n)$. 
Folland determined the orthogonal direct sum decomposition of the spaces $\Forms{j}$ into irreducible subspaces and identified the representations that appear, as follows.

\begin{proposition}[{\cite[Theorem 2]{Fo}}]
Let $0 \leq j  \leq n-1$.
Then
\[
\Forms{j} = 
\begin{cases}
 \bigoplus_{p\geq 0,q\geq 0} \Phi_{pq0} & \text{when $j=0$} \\[3pt]
  \bigoplus_{p\geq 0,q\geq 1} (\Phi_{pqj} \oplus \Psi_{pqj}) & \text{when $1 \leq j \leq n-2$} \\[3pt]
   \bigoplus_{p\geq -1,q\geq 1}  \Psi_{pq(n-1)} &  \text{when $j= n-1$ .}
\end{cases} 
\]
The subspaces $\Phi_{pqj}$ and  $\Psi_{pqj}$ correspond to the two irreducible unitary representations $\rho(q,\underline{1}_j,\underline{0}_{n-2-j},-p)$ and $\rho(q,\underline{1}_{j-1},\underline{0}_{n-1-j},-p)$ of $\group{U}(n)$. 
In particular, the action of $\group{U}(n)$ on $\Forms{j}$ is multiplicity-free, that is, no irreducible unitary representation of $\group{U}(n)$ occurs more than once in $\Forms{j}$.
\end{proposition}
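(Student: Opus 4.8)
The plan is to recover this decomposition from the representation theory of the pair $(\group{U}(n),\group{U}(n-1))$, via the realisation of $\Lambda^{0,j}$ as a homogeneous vector bundle. The group $\group{U}(n)$ acts transitively on $\Sphere$, and the stabiliser of the base point $w_0=(0,\dots,0,1)$ is the standard block embedding of $\group{U}(n-1)$ (acting on the first $n-1$ coordinates); so we may identify $\Sphere$ with $\group{U}(n)/\group{U}(n-1)$. Since $\group{U}(n)$ preserves the CR structure $\Bar{\CRL}$, the bundle $\Lambda^{0,j}=\Lambda^j\Bar{\CRL}^*$ is the homogeneous vector bundle induced from the $\group{U}(n-1)$-representation on its fibre $V_j=\Lambda^j\Bar{\CRL}^*_{w_0}$ over $w_0$. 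A direct computation of the linearised action shows that $\group{U}(n-1)$ acts on $\Bar{\CRL}_{w_0}=\mathrm{span}(\partial_{\bar{z}_1},\dots,\partial_{\bar{z}_{n-1}})$ by the conjugate of the standard representation; hence $\Bar{\CRL}^*_{w_0}$ carries the standard representation of $\group{U}(n-1)$ and $V_j=\Lambda^j(\C^{n-1})$ is its $j$th fundamental representation $\rho(\underline{1}_j,\underline{0}_{n-1-j})$, which is irreducible.

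Next I would invoke Frobenius reciprocity --- equivalently, the Peter--Weyl decomposition of the $L^2$-sections of a homogeneous vector bundle. As $V_j$ is irreducible, the multiplicity of an irreducible representation $\rho(\ell_1,\dots,\ell_n)$ of $\group{U}(n)$ in $\Forms{j}=L^2(\Lambda^{0,j})$ equals the multiplicity of $V_j$ in the restriction $\rho(\ell_1,\dots,\ell_n)|_{\group{U}(n-1)}$. The branching rule for $\group{U}(n)\downarrow\group{U}(n-1)$ is multiplicity-free: $\rho(\ell_1,\dots,\ell_n)|_{\group{U}(n-1)}=\bigoplus_\mu\rho(\mu_1,\dots,\mu_{n-1})$, summed over all $\mu$ satisfying the interlacing conditions $\ell_i\geq\mu_i\geq\ell_{i+1}$ for $1\leq i\leq n-1$, each summand occurring once. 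It follows at once that every irreducible representation of $\group{U}(n)$ occurs in $\Forms{j}$ with multiplicity $0$ or $1$, which is the multiplicity-free assertion; and, each nonzero isotypic component being then irreducible, one may simply \emph{define} $\Phi_{pqj}$ and $\Psi_{pqj}$ as the copies inside $\Forms{j}$ of the two representations named in the statement, once it is checked that exactly those representations occur.

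The remaining step is that check: which $\rho(\ell_1,\dots,\ell_n)$ have $(\underline{1}_j,\underline{0}_{n-1-j})$ interlacing $(\ell_1,\dots,\ell_n)$? This is elementary bookkeeping: the interlacing inequalities force $\ell_1\geq 1$, $\ell_2=\dots=\ell_j=1$, $\ell_{j+1}\in\{0,1\}$, $\ell_{j+2}=\dots=\ell_{n-1}=0$ and $\ell_n\leq 0$, with the obvious degenerations when $j\in\{0,1,n-2,n-1\}$. Writing $q=\ell_1$ and $-p=\ell_n$, the alternative $\ell_{j+1}=1$ gives precisely $\rho(q,\underline{1}_j,\underline{0}_{n-2-j},-p)$ and $\ell_{j+1}=0$ gives $\rho(q,\underline{1}_{j-1},\underline{0}_{n-1-j},-p)$, with $q$ and $p$ ranging over the intervals in the proposition; when $j=0$ the second family disappears, when $j=n-1$ the first family disappears and the value $p=-1$ becomes admissible. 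Assembling the pieces yields the three displayed cases (and for $j=0$ this is the classical expansion of $L^2(\Sphere)$ into bidegree $(p,q)$ spherical harmonics). I expect the only genuinely delicate part to be the careful treatment of the boundary values of $j$ together with the matching of the resulting highest weights with the $\Phi$/$\Psi$ labelling; the representation-theoretic ingredients --- transitivity, the identification of the fibre, Frobenius reciprocity, and the interlacing rule --- are all standard.
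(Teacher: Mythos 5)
Your argument is correct and complete. Note that the paper itself does not prove this proposition: it is quoted from \cite[Theorem 2]{Fo}, and Folland's route is genuinely different from yours. Folland works constructively: he realises each candidate irreducible representation $\rho(q,\underline{1}_k,\underline{0}_{n-2-k},-p)$ inside an explicit tensor-product space $C_{pqk}$, builds the intertwining maps $F^\Upsilon_{pqj}$ from $C_{pqk}$ into the space of forms, exhibits the highest weight forms $\hwv^\Upsilon_{pqj}$, and verifies by direct computation (norms of highest weight vectors, Schur's lemma) that the images are nonzero, irreducible, mutually inequivalent and exhaust $\Forms{j}$. You instead identify $\Lambda^{0,j}$ as the homogeneous bundle over $\Sphere\cong\group{U}(n)/\group{U}(n-1)$ induced from the irreducible fibre module $\Lambda^j\C^{n-1}$, and read off the decomposition from Frobenius reciprocity combined with the multiplicity-free interlacing branching rule for $\group{U}(n)\downarrow\group{U}(n-1)$; your fibre identification (the dual of the conjugate-standard action on $\Bar{\CRL}_{w_0}$) and the bookkeeping at the boundary values of $j$ are right, including the admissibility of $p=-1$ exactly when $j=n-1$ and the disappearance of one family when $j\in\lset0,n-1\rset$. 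Your route is shorter and makes the multiplicity-freeness transparent, since it follows at once from the irreducibility of the fibre and the multiplicity-one branching; what it does not provide is the concrete realisation of $\Phi_{pqj}$ and $\Psi_{pqj}$, which for you are only abstract isotypic components. The paper leans heavily on Folland's explicit description (the maps $F^\Upsilon_{pqj}$, the highest weight forms and their norms \eqref{eq:phipqj-norm}--\eqref{eq:psipqj-norm}) in Section 4, so the constructive approach cannot be dispensed with elsewhere, even though it is not needed for the proposition as stated.
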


Note that the symbols $\underline{0}_\ell$ and $\underline{1}_\ell$ denote $\ell$ consecutive entries of $0$ or $1$.
This decomposition leads us to introduce the following index sets:
\begin{align*}
I_j& := \begin{cases}
\lset(p,q)\,:\,
p\ge 0, q\ge 0 \rset  & \text{when $j=0$,} \\
\lset(p,q)\,:\,
p\ge 0, q\ge 1
\rset  & \text{when $1\le j\le n-2$,} \\
\lset(p,q)\,:\,
 p\ge -1, q\ge 1
\rset  & \text{when $j=n-1$;}
\end{cases}
\\
\noalign{\noindent{and}}
\TT_j     &:=
\begin{cases}
\lset\Phi\rset   & \text{when $j=0$,} \\
\lset\Phi,\Psi\rset & \text{when $1\le j\le n-2$,} \\
\lset\Psi\rset  & \text{when $j=n-1$.}
\end{cases}
\end{align*}
Then the set $I_j \times Y_j$ parametrises the irreducible representations of $\group{U}(n)$ that appear in the decomposition of the representation of $\group{U}(n)$ on $\Forms{j}$.
We abuse notation slightly and write $(p,q,\Upsilon) \in I_j \times Y_j$ to mean that $((p,q),\Upsilon) \in I_j \times Y_j$.
When $(p,q,\Upsilon) \in I_j \times Y_j$, we write $\Upsilon_{pqj}$ for one of  the spaces $\Phi_{pqj}$ or $\Psi_{pqj}$, depending on whether $\Upsilon=\Phi$ or $\Upsilon = \Psi$.
We adopt the convention that $\Upsilon_{pqj} = \lset0\rset$ when $(p,q,\Upsilon) \notin I_j \times Y_j$.

The subspaces $\Phi_{pqj}$ and $\Psi_{pqj}$ are finite-dimensional spaces of smooth forms (see below), hence they lie in the domain of all smooth differential operators. 
Since the representation of $\group{U}(n)$ on each $\Forms{j}$ is multiplicity-free and the operators $\dbarb$, $\dbarba$, and $\boxb$ are $\group{U}(n)$-equivariant, these operators must preserve the above decomposition; more precisely, they must map irreducible components into equivalent irreducible components by multiples of unitary operators.
To complete the picture, Folland determined these multiples, that is, he computed the ``eigenvalues'' of $\dbarb$, $\dbarba$, and $\boxb$ on each piece of the decomposition above.

\begin{proposition}[{\cite[Theorems 4 and 6]{Fo}}]\label{prp:eigenvalues}
Let $0\le j\le n-2$ and define
\begin{equation}\label{eq:deflambdapqj}
\lambda_{pqj}
=\bigl(2{(q+j)}{(p+n-1-j)}\bigr)^{1/2}\,,
\end{equation}
for all $(p,q)\in I_j \cup I_{j+1}$.
Then
\begin{enumerate}[(i)]
\item
$\dbarb(\Phi_{pqj}) = \Psi_{pq(j+1)}$ for all $(p,q)\in I_j$,
and $\dbarb\rest_{\Phi_{pqj}}$ is $\lambda_{pqj}$ multiplied by  a unitary operator;
\item $\dbarb(\Psi_{pq(j+1)}) = \lset0\rset$ for all $(p,q)\in I_{j+1}$;
\item $\dbarba(\Psi_{pq(j+1)}) = \Phi_{pqj}$
for all $(p,q)\in I_{j+1}$,
and $\dbarba\rest_{\Psi_{pq(j+1)}}$ is $\lambda_{pqj}$ multiplied by  a unitary operator;
\item $\dbarba(\Phi_{pqj}) =\lset0\rset$ for all $(p,q)\in I_{j}$;
\item $\boxb(\Phi_{pqj}) \subseteq \Phi_{pqj}$ and $\boxb\rest_{\Phi_{pqj}} = \lambda_{pqj}^2 \id_{\Phi_{pqj}}$ for all $(p,q)\in I_j$;
\item $\boxb(\Psi_{pq(j+1)}) \subseteq \Psi_{pq(j+1)}$ and $\boxb\rest_{\Psi_{pq(j+1)}}
= \lambda_{pqj}^2 \id_{\Psi_{pq(j+1)}}$
for all $(p,q)\in  I_{j+1}$.
\end{enumerate} 
\end{proposition}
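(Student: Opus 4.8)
Everything in the statement except the precise value of $\lambda_{pqj}$ is formal. By the preceding proposition each $\Forms{j}$ is multiplicity free and $\dbarb$, $\dbarba$ are $\group{U}(n)$-equivariant, so Schur's lemma forces $\dbarb$ to carry each of the finite-dimensional irreducible summands of $\Forms{j}$ either to $\lset0\rset$ or, by a scalar times a unitary, onto the unique summand of $\Forms{j+1}$ realising an equivalent representation, and similarly for $\dbarba$. Reading off the labels in that proposition, $\Phi_{pqj}$ and $\Psi_{pq(j+1)}$ both realise $\rho(q,\underline{1}_j,\underline{0}_{n-2-j},-p)$, while the representation carried by $\Psi_{pq(j+1)}$ occurs in no summand of $\Forms{j+2}$ and the one carried by $\Phi_{pqj}$ occurs in no summand of $\Forms{j-1}$. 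Hence (ii) and (iv) follow at once (they are trivial at the two ends of the complex, where the target space is zero), and $\dbarb\rest_{\Phi_{pqj}}=c_{pqj}\,U$ for some scalar $c_{pqj}$ and some unitary $U\colon\Phi_{pqj}\to\Psi_{pq(j+1)}$, so that $\dbarba\rest_{\Psi_{pq(j+1)}}=(\dbarb\rest_{\Phi_{pqj}})^*=\overline{c_{pqj}}\,U^*$. Using (ii) and (iv) to discard one summand of $\boxb=\dbarb\dbarba+\dbarba\dbarb$, we get $\boxb\rest_{\Phi_{pqj}}=\dbarba\dbarb\rest_{\Phi_{pqj}}=\labs c_{pqj}\rabs^2\,\id$ and $\boxb\rest_{\Psi_{pq(j+1)}}=\dbarb\dbarba\rest_{\Psi_{pq(j+1)}}=\labs c_{pqj}\rabs^2\,\id$. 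Thus the whole proposition reduces to the single identity $\labs c_{pqj}\rabs=\lambda_{pqj}$, together with the observation that, when $\lambda_{pqj}=0$, one has $\Psi_{pq(j+1)}=\lset0\rset$ by the definitions of $I_{j+1}$ and $Y_{j+1}$, so (i) and (iii) still read correctly in that case.

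It remains to compute the scalar by which $\boxb$ acts on $\Phi_{pqj}$, and for this I would exploit that $\boxb$ is a $\group{U}(n)$-invariant operator of second order on the homogeneous bundle $\Lambda^{0,j}$ over $\Sphere=\group{U}(n)/\group{U}(n-1)$. Since the only $\group{U}(n)$-invariant vector field on $\Sphere$ (up to scalars) is the Reeb vector field $T$, and the bundle $\Lambda^{0,j}$ has irreducible fibre, the $\group{U}(n)$-invariant operators of order at most two on $\Lambda^{0,j}$ are spanned by $1$, $\widetilde{T}$, $\widetilde{T}^2$ and the Casimir element $\Omega$ of $\group{U}(n)$, where $\widetilde{T}=-iT$; comparing principal symbols (that of $\boxb$ being the symbol of the CR sublaplacian, which equals that of $\Omega-\widetilde{T}^2$, the full invariant Laplacian with its vertical part removed) then gives $\boxb=\Omega-\widetilde{T}^2+\beta\,\widetilde{T}+b_j$ with $\beta$ a constant and $b_j$ depending only on $j$. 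Now $\Omega$ acts on the summand realising the highest weight $(\ell_1,\dots,\ell_n)$ by $\sum_{i=1}^{n}\ell_i(\ell_i+n+1-2i)$ and $\widetilde{T}$ by $\sum_{i=1}^{n}\ell_i$; for $\Phi_{pqj}$, whose highest weight is $(q,\underline{1}_j,\underline{0}_{n-2-j},-p)$, this gives $\Omega\mapsto q(q+n-1)+j(n-1-j)+p(p+n-1)$ and $\widetilde{T}\mapsto q+j-p$. The constants $\beta$ and $b_j$ are fixed by a handful of elementary test cases: the constants $\Phi_{000}$ and the boundary values of holomorphic polynomials $\bigoplus_{d\geq0}\Phi_{d00}$, on all of which $\boxb$ vanishes, force $\beta=n-1$ and $b_0=0$, and one further evaluation per $j$ gives $b_j=0$, so that $\boxb=\Omega-\widetilde{T}^2+(n-1)\widetilde{T}$. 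Substituting the eigenvalue data for $\Phi_{pqj}$ and simplifying yields $\boxb\rest_{\Phi_{pqj}}=\lambda_{pqj}^2\,\id$ with $\lambda_{pqj}$ as in~\eqref{eq:deflambdapqj}; in particular $c_{pqj}\neq0$ exactly when $\lambda_{pqj}\neq0$, which provides the surjectivity needed in (i) and (iii).

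A more hands-on alternative, in the spirit of Folland's original argument, avoids the Weitzenb\"ock identity altogether: realise $\Phi_{pqj}$ explicitly as a space of polynomial $(0,j)$-forms on $\Sphere$, choose a single nonzero weight vector $v$ in it (say of extreme weight, so that $\dbarb v$ is again a form with monomial coefficients), compute $\dbarb v=\pi_{j+1}\,dv$, and read off $\labs c_{pqj}\rabs^2=\lnorm\dbarb v\rnorm^2/\lnorm v\rnorm^2$, which is legitimate since $\dbarba v=0$ by (iv); the two norms are integrals of monomials against the surface measure and reduce to standard Beta-integral evaluations, the factor $2$ in $\lambda_{pqj}^2$ entering through the normalisation $\labs d\bar z_m\rabs^2=2$ of the fibre metric induced by $\C^n$. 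Either way, the structural part above is painless; the hard part is the quantitative step, and this is also where care is required---pinning down the Weitzenb\"ock identity (equivalently, fixing $\beta$ and the $b_j$), or, in the alternative, keeping track of the projection $\pi_{j+1}$, of the choice of weight vector in the presence of the relations among the $(0,1)$-forms on $\Sphere$, and of the metric normalisations. A final small point is to verify, at the degenerate indices where $\lambda_{pqj}=0$, that the stated identities remain consistent with the conventions adopted for the spaces $\Phi_{pqj}$ and $\Psi_{pqj}$.
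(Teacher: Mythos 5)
The paper offers no proof of this proposition: it is imported verbatim from Folland \cite[Theorems 4 and 6]{Fo}, so the only fair comparison is with Folland's argument, which your ``hands-on alternative'' correctly identifies. Your structural reduction is sound and complete: multiplicity-freeness of each $\Forms{j}$, the $\group{U}(n)$-equivariance of $\dbarb$ and $\dbarba$, Schur's lemma, and the bookkeeping of which highest weights occur in which $\Forms{j}$ do yield (ii) and (iv), reduce (i), (iii), (v), (vi) to the single identity $\labs c_{pqj}\rabs^2=\lambda_{pqj}^2$, and your treatment of the degenerate indices ($j=0$, $q=0$, where $\lambda_{pqj}=0$ and $\Psi_{p01}=\lset 0\rset$ by convention) is right.

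The gap is in the quantitative step, which is the entire content of the proposition, and neither route as written closes it. For the Weitzenb\"ock route, the assertion that the $\group{U}(n)$-invariant operators of order at most two on $\Lambda^{0,j}$ are spanned by $1,\widetilde T,\widetilde T^2,\Omega$ is false for $1\le j\le n-2$: the operator $\dbarb\dbarba$ is invariant and of order two, but its principal symbol $e(\omega)e(\omega)^*$ (with $\omega$ the $(0,1)$-part of the covector) is $\labs\omega\rabs^2$ times a nontrivial projection of the fibre, hence not a scalar multiple of the identity, whereas all four of your proposed spanning operators have scalar symbol. What you actually need is the weaker pair of claims that $\boxb-(\Omega-\widetilde T^2)$ is first order (a symbol computation in which the factor of $2$ from $\labs\xi_H\rabs^2=2\labs\pi_{0,1}\xi\rabs^2$ must be reconciled with the normalisation of $\Omega$) and that first-order invariant operators on $\Lambda^{0,j}$ are spanned by $1$ and $\widetilde T$ --- neither is justified. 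Worse, the determination of the two remaining constants collapses for $j\ge 1$: your test cases all live in $j=0$ (the kernel of $\boxb$ on functions), but for $1\le j\le n-2$ the operator $\boxb$ has trivial kernel, so there is no analogous supply of known eigenforms; the ``one further evaluation per $j$'' is never specified, and the assumed $j$-independence of the coefficient of $\widetilde T$ is unargued. (That $\Omega-\widetilde T^2+(n-1)\widetilde T$ reproduces $2(q+j)(p+n-1-j)$ on the weight $(q,\underline{1}_j,\underline{0}_{n-2-j},-p)$ is a correct consistency check, but it is not a derivation.) The second route is the right one, but as written it is a description of Folland's computation rather than the computation itself: the evaluation of $\dbarb\hwv^{\Phi}_{pqj}=\pi_{j+1}d\hwv^{\Phi}_{pqj}$ and of the two norms is exactly the deferred ``hard part'', and deferring it means $\lambda_{pqj}$ is never actually computed.
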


The spectral resolution of $\boxb$ on $\Forms{j}$ may now be expressed in terms of the orthogonal projections $P^{\Phi}_{pqj}$ and $P^{\Psi}_{pqj}$ in $\Lin(\Forms{j})$ onto the subspaces $\Phi_{pqj}$ and $\Psi_{pqj}$.
Let $K^\Phi_{pqj}$ and $K^\Psi_{pqj}$ be the corresponding integral kernels, and  set  $\lambda_{pqj}^\Phi = \lambda_{pqj}$ and $\lambda_{pqj}^\Psi = \lambda_{pq(j-1)}$.

\begin{lemma}\label{lem:Cor-4.3v}
Let $0\le j\le n-1$.
Suppose that $F : \R \to \C$ is a compactly supported Borel function.
Suppose further that $F(0)=0$ when $j=0$ or
$j=n-1$.
Then
the kernel $K_{F(\sqrt{\boxb})}$
of the operator $F(\sqrt{\boxb})$
is given
by the formula
\begin{equation}\label{eq:kernelformula}
K_{ F(\sqrt{\boxb})}(z,w)=\sum_{(p,q,\Upsilon) \in I_j \times Y_j} F({\lambda^\Upsilon_{pqj}}) \, K^{\Upsilon}_{pqj}(z,w)
\quad\forall z,w \in \Sphere.
\end{equation}
\end{lemma}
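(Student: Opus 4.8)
The plan is to exploit the $\group{U}(n)$-equivariant decomposition of $\Forms{j}$ in order to write $F(\sqrt{\boxb})$ explicitly as a \emph{finite} linear combination of the spectral projections $P^\Upsilon_{pqj}$, and then simply to read off its integral kernel from that of each projection.

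First I would combine the orthogonal decomposition of $\Forms{j}$ into the finite-dimensional subspaces $\Upsilon_{pqj}$ recalled above with parts (v) and (vi) of Proposition~\ref{prp:eigenvalues}: on $\Upsilon_{pqj}$ the operator $\boxb$ acts as $(\lambda^\Upsilon_{pqj})^2\,\id$, so $\sqrt{\boxb}$ acts as $\lambda^\Upsilon_{pqj}\,\id$. The spectral theorem then gives, for every bounded Borel function $F$,
\[
F(\sqrt{\boxb}) = \sum_{(p,q,\Upsilon)\in I_j\times Y_j} F(\lambda^\Upsilon_{pqj})\, P^\Upsilon_{pqj},
\]
with convergence in the strong operator topology on $\Forms{j}$.

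The crucial step is to observe that the hypotheses force this series to reduce to a finite sum. From \eqref{eq:deflambdapqj} a direct computation shows that $(p+1)(q+1)\le (\lambda^\Upsilon_{pqj})^2$ whenever $\lambda^\Upsilon_{pqj}\ne 0$, and that $\lambda^\Upsilon_{pqj}=0$ can occur only in the two boundary cases $j=0$ (for the terms with $q=0$) and $j=n-1$ (for the terms with $p=-1$). Consequently, if $\supp F\subseteq[-R,R]$ and, in the cases $j\in\{0,n-1\}$, $F(0)=0$, then $F(\lambda^\Upsilon_{pqj})\ne 0$ forces $\lambda^\Upsilon_{pqj}\ne 0$ (either because $1\le j\le n-2$, or because $F$ vanishes at $0$) and $\lambda^\Upsilon_{pqj}\le R$, hence $(p+1)(q+1)\le R^2$; so only finitely many triples $(p,q,\Upsilon)$ contribute.

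Finally, I would note that each $P^\Upsilon_{pqj}$ is the orthogonal projection of $\Forms{j}$ onto the finite-dimensional space $\Upsilon_{pqj}$ of smooth forms (as recalled above), hence a finite-rank — and in particular an integral — operator, with the (smooth) kernel $K^\Upsilon_{pqj}$ fixed in the statement. Since the kernel of a finite linear combination of integral operators is the corresponding combination of their kernels, formula \eqref{eq:kernelformula} follows by combining this with the two displays above. The only genuinely non-formal point is the finiteness of the sum, i.e.\ the growth estimate on $\lambda_{pqj}$ together with the role of the hypothesis $F(0)=0$ in the boundary cases $j=0$ and $j=n-1$ (without it, for $j\in\{0,n-1\}$ the operator $F(\sqrt{\boxb})$ would contain the infinite-rank projection onto the kernel of $\boxb$ and the stated kernel formula would fail); everything else is the spectral theorem and the definition of an integral operator, and I do not anticipate any serious obstacle.
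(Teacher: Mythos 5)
Your argument is correct and is exactly the (implicit) proof the paper intends: the lemma is presented there as an immediate consequence of Folland's decomposition and Proposition~\ref{prp:eigenvalues}, with the remark following the lemma noting precisely the finiteness of the sum that you verify. Your identification of where $\lambda^\Upsilon_{pqj}=0$ can occur (only $\Phi_{p00}$ for $j=0$ and $\Psi_{(-1)q(n-1)}$ for $j=n-1$) and of the role of the hypothesis $F(0)=0$ is accurate, so nothing is missing.
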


Note that, under the assumptions of Lemma \ref{lem:Cor-4.3v}, the sum in \eqref{eq:kernelformula} has a finite number of nonzero summands. 
In fact, \eqref{eq:kernelformula} holds also for functions $F$ with noncompact support that decay sufficiently rapidly at infinity. 
This is an easy consequence of the following orthogonality relations.

\begin{proposition}\label{prp:HS-projection}
Let $0 \leq j \leq n-1$.
For all $(p,q,\Upsilon) ,(p',q',\Upsilon') \in I_j \times Y_j$, 
\[
\int_{\Sphere}\biglip K^{\Upsilon}_{pqj}(z,w), K^{\Upsilon'}_{p'q'j} (z,w)\bigrip_{\HS}\, d\sigma(z)
=\frac{\dim \Upsilon_{pqj}}{\sigma(\Sphere)} \, \delta_{\Upsilon\Upsilon'}\delta_{pp'}\delta_{qq'}
\qquad\forall w \in \Sphere.
\]
\end{proposition}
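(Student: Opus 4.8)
The plan is to reduce the claim to the standard computation of the reproducing kernel of a finite-dimensional subspace of $L^2$ sections, combined with the transitivity of the $\group{U}(n)$-action on $\Sphere$.

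First I would fix $(p,q,\Upsilon)$ and $(p',q',\Upsilon')$ in $I_j \times Y_j$ and choose orthonormal bases $e_1,\dots,e_d$ of $\Upsilon_{pqj}$ and $f_1,\dots,f_{d'}$ of $\Upsilon'_{p'q'j}$, where $d = \dim\Upsilon_{pqj}$ and $d' = \dim\Upsilon'_{p'q'j}$; these are finite-dimensional spaces of smooth forms, so everything below is legitimate pointwise. The integral kernel of the orthogonal projection onto $\Upsilon_{pqj}$ is then given by $K^{\Upsilon}_{pqj}(z,w)\,v = \sum_{i=1}^{d} \lip v, e_i(w)\rip e_i(z)$ for $v \in \Lambda^{0,j}_w$, and similarly for $K^{\Upsilon'}_{p'q'j}$. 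A direct computation with these rank-one homomorphisms — using that the trace of the operator $v \mapsto \lip v,b\rip a$ equals $\lip a,b\rip$ — gives, for all $z,w \in \Sphere$,
\[
\biglip K^{\Upsilon}_{pqj}(z,w), K^{\Upsilon'}_{p'q'j}(z,w)\bigrip_{\HS}
= \sum_{i=1}^{d}\sum_{a=1}^{d'} \lip f_a(w), e_i(w)\rip \, \lip e_i(z), f_a(z)\rip.
\]
Integrating in $z$ over $\Sphere$ and recognising $\int_{\Sphere} \lip e_i(z), f_a(z)\rip \,d\sigma(z) = \llangle e_i, f_a\rrangle$, the left-hand side of the asserted identity becomes $\sum_{i,a} \llangle e_i, f_a\rrangle \,\lip f_a(w), e_i(w)\rip$.

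Next I would split into two cases. If $(p,q,\Upsilon) \neq (p',q',\Upsilon')$, then $\Upsilon_{pqj}$ and $\Upsilon'_{p'q'j}$ are distinct summands in the orthogonal direct sum decomposition of $\Forms{j}$ recalled above (they carry inequivalent representations of $\group{U}(n)$), hence mutually orthogonal, so $\llangle e_i, f_a\rrangle = 0$ for all $i,a$ and the integral vanishes; this accounts for the factor $\delta_{\Upsilon\Upsilon'}\delta_{pp'}\delta_{qq'}$. If instead $(p',q',\Upsilon') = (p,q,\Upsilon)$, one may take $f_a = e_a$, so $\llangle e_i, f_a\rrangle = \delta_{ia}$ and the integral collapses to $\sum_{i=1}^{d} \labs e_i(w)\rabs_w^2$, which is precisely $\tr K^{\Upsilon}_{pqj}(w,w)$.

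It remains to identify this last quantity with $\dim\Upsilon_{pqj}/\sigma(\Sphere)$ for every $w$. The function $w \mapsto \tr K^{\Upsilon}_{pqj}(w,w)$ is independent of the chosen orthonormal basis; since $P^{\Upsilon}_{pqj}$ is $\group{U}(n)$-equivariant, its kernel transforms covariantly under the fibrewise unitary $\group{U}(n)$-action on $\Lambda^{0,j}$, so this function is $\group{U}(n)$-invariant, and hence constant because $\group{U}(n)$ acts transitively on $\Sphere$. Integrating the constant over $\Sphere$ and using $\int_{\Sphere} \labs e_i(w)\rabs_w^2 \,d\sigma(w) = \llangle e_i,e_i\rrangle = 1$ gives $\tr K^{\Upsilon}_{pqj}(w,w)\cdot\sigma(\Sphere) = d = \dim\Upsilon_{pqj}$, which is the claim. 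I do not anticipate a serious obstacle here: the only points needing care are the bookkeeping with the $\Hom(\Lambda^{0,j}_w,\Lambda^{0,j}_z)$-valued kernels and the Hilbert–Schmidt inner product, and the standard observation that the pointwise trace of an equivariant projection kernel on a homogeneous space is constant.
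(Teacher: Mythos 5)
Your proposal is correct and follows essentially the same route as the paper: expand the projection kernels in orthonormal bases, integrate in $z$ to reduce to the Gram matrix $\llangle e_i,f_a\rrangle$ (whence the Kronecker deltas by orthogonality of inequivalent irreducible summands), and then show the pointwise trace $\sum_i\labs e_i(w)\rabs_w^2$ is constant by $\group{U}(n)$-invariance before averaging over $\Sphere$. The paper's justification of $w$-independence is phrased via basis-independence and the fact that $\group{U}(n)$ carries orthonormal bases to orthonormal bases, which is the same equivariance argument you give.
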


\begin{proof}
We assume that $1 \leq j \leq n-2$; the other cases are similar but easier.
Let $\lset  \varphi^r_{pqj}\,:\, 1\le r\le \dim \Phi_{pqj}  \rset$ and $\lset  \psi^s_{pqj}\,:\, 1\le s\le \dim \Psi_{pqj}  \rset$ be ortho\-normal bases of $\Phi_{pqj}$ and $\Psi_{pqj}$.
Then, for all $z,w \in \Sphere$,
\begin{gather*}
K^\Phi_{pqj} (z,w)
= \sum_{r=1}^{\dim \Phi_{pqj}} \lip \dummy , \varphi^r_{pqj}(w) \rip \,\varphi^r_{pqj}(z)\,,\\
K^\Psi_{pqj} (z,w)
= \sum_{s=1}^{\dim \Psi_{pqj}} \lip \dummy , \psi^s_{pqj}(w) \rip \,\psi^s_{pqj}(z)\,.
\end{gather*}
Then
\[\begin{aligned}
&\int_{\Sphere}\biglip K^{\Phi}_{pqj}(z,w), K^{\Phi}_{p'q'j} (z,w)\bigrip_{\HS}\, d\sigma(z)\\
&\qquad= \int_{\Sphere} \sum_{r=1}^{\dim \Phi_{pqj}} \sum_{s=1}^{\dim \Phi_{p'q'j}} \Biglip \lip \dummy , \varphi^r_{pqj}(w) \rip \,\varphi^r_{pqj}(z)\,, \lip \dummy , \varphi^s_{p'q'j}(w) \rip \,\varphi^s_{p'q'j}(z)\, \Bigrip_{\HS}\, d\sigma(z)
\\
&\qquad= \int_{\Sphere} \sum_{r=1}^{\dim \Phi_{pqj}} \sum_{s=1}^{\dim \Phi_{p'q'j}} \biglip  \varphi^s_{p'q'j}(w), \varphi^r_{pqj}(w) \bigrip  \, \biglip \varphi^r_{pqj}(z) \,,\varphi^s_{p'q'j}(z)\, \bigrip\, d\sigma(z)
\\
&\qquad = \sum_{r=1}^{\dim \Phi_{pqj}} \sum_{s=1}^{\dim \Phi_{p'q'j}} \biglip  \varphi^s_{p'q'j}(w), \varphi^r_{pqj}(w) \bigrip \, \left\llangle \varphi^r_{pqj} ,\varphi^s_{p'q'j}\right\rrangle
\\
&\qquad = \delta_{rs} \delta_{pp'}\delta_{qq'} \sum_{r=1}^{\dim \Phi_{pqj}} \biglip  \varphi^r_{pqj}(w), \varphi^r_{pqj}(w) \bigrip .
\end{aligned}\]
Since this equality holds for all orthonormal bases $\lset  \varphi^r_{pqj}\,:\, 1\le r\le \dim \Phi_{pqj}  \rset$ of $\Phi_{pqj}$ and $\group{U}(n)$ maps orthonormal bases to orthonormal bases, the last expression must be independent of $w$. 
In particular, by averaging over the sphere, we obtain
\[\begin{aligned}
&\int_{\Sphere}\biglip K^{\Phi}_{pqj}(z,w), K^{\Phi}_{p'q'j} (z,w)\bigrip_{\HS}\, d\sigma(z)\\
&\qquad=
\frac{\delta_{rs} \delta_{pp'} \delta_{qq'}}{\sigma(\Sphere)} \sum_{r=1}^{\dim \Phi_{pqj}}  \int_{\Sphere}  \biglip \varphi^r_{pqj}(w), \varphi^r_{pqj}(w) \bigrip \,d\sigma(w) \\
&\qquad=
\frac{\delta_{rs} \delta_{pp'} \delta_{qq'}}{\sigma(\Sphere)} \sum_{r=1}^{\dim \Phi_{pqj}} \llangle \varphi^r_{pqj}, \varphi^r_{pqj} \rrangle  \\
&\qquad=
\frac{\delta_{rs} \delta_{pp'} \delta_{qq'}}{\sigma(\Sphere)} \dim \Phi_{pqj},
\end{aligned}\]
for all $w \in \Sphere$, proving the case where $\Upsilon=\Upsilon'=\Phi$.
The case where $\Upsilon=\Upsilon'=\Psi$ may be treated analogously, while the case where $\Upsilon \neq \Upsilon'$ follows from the orthogonality of the system $\lset  \varphi^r_{pqj}\,, \psi^s_{pqj}\,:\, 1\le r\le \dim \Phi_{pqj} \,,\,\, 1\le s\le \dim \Psi_{pqj}  \rset$.
\end{proof}

The main result of this section is the following theorem; it should be compared with Lemma 3.1 and Corollary 3.2 in \cite{CKS}.

\begin{theorem}\label{thm:Theorem-delta}
Let $0 \leq j \leq n-1$.
For all $(p,q,\Upsilon) \in I_j \times Y_j$,
\begin{align}
\overline{\lip z,w \rip}\, K^\Upsilon_{pqj}(z,w) 
&= \sum_{(p',q',\Upsilon') \in I_j \times Y_j} \Bar{\delta}^{\Upsilon \Upsilon'}_{pp'qq'j} K^{\Upsilon'}_{p'q'j}(z,w) ,
\label{eq:deltabar} \\
\lip z,w \rip K^\Upsilon_{pqj}(z,w) 
&= \sum_{(p',q',\Upsilon') \in I_j \times Y_j} \delta^{\Upsilon \Upsilon'}_{pp'qq'j} K^{\Upsilon'}_{p'q'j}(z,w) , \label{eq:delta}
\end{align}
where
\begin{align*}
\delta^{\Psi\Psi}_{p(p+1)qqj} &= \frac{p+n+1-j}{p+q+n} \cdot \frac{p+1}{p+n-j}  \,,
&
\Bar{\delta}^{\Phi\Phi}_{ppq(q+1)j} &= \frac{q+1+j}{p+q+n} \cdot \frac{q}{q+j}\,, 
\\
\delta^{\Psi\Psi}_{ppq(q-1)j} &=  \frac{q+n-2}{p+q+n-2} \cdot \frac{q-2+j}{q-1+j}\,,
&
\Bar{\delta}^{\Phi\Phi}_{p(p-1)qqj} &= \frac{p+n-1}{p+q+n-2} \cdot \frac{p+n-2-j}{p+n-1-j} \,,
\\
\delta^{\Phi\Phi}_{ppq(q-1)j} &=\frac{q+n-2}{p+q+n-2}\,,
&
\Bar{\delta}^{\Psi\Psi}_{p(p-1)qqj} &=\frac{p+n-1}{p+q+n-2}\,,
\\
\delta^{\Phi\Phi}_{p(p+1)qqj} &= \frac{p+1}{p+q+n} \,, 
&
\Bar{\delta}^{\Psi\Psi}_{ppq(q+1)j} &= \frac{q}{p+q+n}\,,
\\
\delta^{\Phi\Psi}_{ppqqj} &= \frac{n-1-j}{(q+j)(p+n-1-j)},
&
\Bar{\delta}^{\Psi\Phi}_{ppqqj} &= \frac{j}{(q-1+j)(p+n-j)}\,,
\end{align*}
while all the other coefficients vanish.
\end{theorem}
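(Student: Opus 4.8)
The plan is to translate the two identities into statements about operators, to fix their shape by representation theory, and only then to pin down the constants. Write $M_{z_m}$ and $M_{\overline{z}_m}$ for the operators of multiplication by the coordinate functions $z_m$ and $\overline{z}_m$ on $\Forms{j}$; these are bounded (the $z_m$ are bounded by $1$ on $\Sphere$) and mutually adjoint, since $\overline{z_m}=\overline{z}_m$ on $\Sphere$. A direct computation with integral kernels shows that, for any integral operator $T$ on $\Forms{j}$ with kernel $K_T$, the operator $\sum_{m} M_{z_m} T M_{\overline{z}_m}$ has kernel $\lip z,w \rip\, K_T(z,w)$, while $\sum_{m} M_{\overline{z}_m} T M_{z_m}$ has kernel $\overline{\lip z,w \rip}\, K_T(z,w)$. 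Hence \eqref{eq:delta} and \eqref{eq:deltabar} are equivalent to the operator identities
\[
\sum_{m} M_{z_m} P^{\Upsilon}_{pqj} M_{\overline{z}_m} = \sum_{(p',q',\Upsilon')\in I_j\times Y_j} \delta^{\Upsilon\Upsilon'}_{pp'qq'j}\, P^{\Upsilon'}_{p'q'j}, \qquad \sum_{m} M_{\overline{z}_m} P^{\Upsilon}_{pqj} M_{z_m} = \sum_{(p',q',\Upsilon')\in I_j\times Y_j} \overline{\delta}^{\Upsilon\Upsilon'}_{pp'qq'j}\, P^{\Upsilon'}_{p'q'j}.
\]

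Since $\sum_m z_m\otimes\overline{z}_m$ is $\group{U}(n)$-invariant, the superoperator $T\mapsto\sum_m M_{z_m} T M_{\overline{z}_m}$ commutes with the conjugation action of $\group{U}(n)$ on operators on $\Forms{j}$; as each $P^{\Upsilon}_{pqj}$ is fixed by this action and the representation of $\group{U}(n)$ on $\Forms{j}$ is multiplicity-free, Schur's lemma forces $\sum_m M_{z_m} P^{\Upsilon}_{pqj} M_{\overline{z}_m}$ to lie in the commutative algebra spanned by the projections $P^{\Upsilon'}_{p'q'j}$, with real coefficients because the superoperator preserves self-adjointness. This already yields the existence of the decompositions; to see that only the listed indices occur, observe that $M_{\overline{z}_m}$ carries $\Upsilon'_{p'q'j}$ into the span of the $\group{U}(n)$-types appearing in $\Upsilon'_{p'q'j}\otimes\rho(0,\dots,0,-1)$, which by Pieri's rule are exactly those whose highest weight is obtained from that of $\Upsilon'_{p'q'j}$ by lowering one entry by $1$. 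Composing with $P^{\Upsilon}_{pqj}$, with $M_{z_m}$, and with the projection onto $\Forms{j}$, we retain only the $(p',q',\Upsilon')$ for which the highest weight of $\Upsilon'_{p'q'j}$ differs from that of $\Upsilon_{pqj}$ by raising one entry by $1$ and which moreover labels a summand of $\Forms{j}$ (and, for \eqref{eq:deltabar}, the analogous condition with ``raising'' replaced by ``lowering''). Running this against Folland's decomposition in the three cases $j=0$, $1\le j\le n-2$, $j=n-1$ produces precisely the index pairs in the statement; in particular the sums in \eqref{eq:deltabar} and \eqref{eq:delta} are finite.

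It remains to compute the coefficients, and I would first cut down their number. Computing the trace of $\bigl(\sum_m M_{z_m} P^{\Upsilon}_{pqj} M_{\overline{z}_m}\bigr) P^{\Upsilon'}_{p'q'j}$ in two ways — directly, and after moving the multiplication operators around the trace using cyclicity and the second operator identity above — gives
\[
\delta^{\Upsilon\Upsilon'}_{pp'qq'j}\,\dim\Upsilon'_{p'q'j} = \overline{\delta}^{\Upsilon'\Upsilon}_{p'pq'qj}\,\dim\Upsilon_{pqj},
\]
so that, granted Weyl's dimension formula for $\group{U}(n)$, the $\overline{\delta}$'s are determined by the $\delta$'s (and the two columns of the table are forced to be consistent). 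Next, $\dbarb$ commutes with multiplication by $\lip z,w \rip$, because in $d(\lip z,w \rip\,\alpha)=\lip z,w \rip\, d\alpha+\bigl(\sum_m \overline{w}_m\,dz_m\bigr)\wedge\alpha$ the second term has bidegree $(1,j)$ and so is killed by $\pi_{j+1}$; dually, $\dbarba$ commutes with multiplication by $\overline{\lip z,w \rip}$. Combined with the factorisation $P^{\Psi}_{pq(j+1)} = \lambda_{pqj}^{-2}\,\dbarb\, P^{\Phi}_{pqj}\, \dbarba$ extracted from Proposition~\ref{prp:eigenvalues}, this shows that $\sum_m M_{z_m}P^{\Psi}_{pq(j+1)}M_{\overline{z}_m}$ equals $\lambda_{pqj}^{-2}\,\dbarb\bigl(\sum_m M_{z_m}P^{\Phi}_{pqj}M_{\overline{z}_m}\bigr)\dbarba$; expanding the inside via the first operator identity (only the $\Phi$-terms survive, since $\dbarb$ annihilates the $\Psi$-components) yields $\delta^{\Psi\Psi}_{pp'qq'(j+1)} = (\lambda_{p'q'j}/\lambda_{pqj})^2\,\delta^{\Phi\Phi}_{pp'qq'j}$. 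Thus the whole table is reduced to the coefficients $\delta^{\Phi\Phi}$ and to the single ``mixing'' coefficient $\delta^{\Phi\Psi}_{ppqqj}$.

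These last coefficients I would obtain by explicit computation. Evaluating the first operator identity on a highest weight vector of $\Phi_{pqj}$ — provided explicitly by Folland's analysis of the spaces $\Forms{j}$ — turns the identity into scalar equations in which $M_{z_m}$ and $M_{\overline{z}_m}$ act by elementary polynomial manipulations, and the normalising constants reduce to $L^2(\Sphere)$-norms of monomials and to the dimensions $\dim\Upsilon_{pqj}$ (computed via Proposition~\ref{prp:HS-projection} and Weyl's formula). For $\delta^{\Phi\Phi}$ this is enough, and it shows, as the table asserts, that these coefficients do not depend on $j$; for the mixing coefficient one needs in addition to know how a suitable tensor-product vector splits between the two $\group{U}(n)$-irreducibles of $\Forms{j}$ carrying the same pair $(p,q)$, which, together with symmetry properties of Clebsch--Gordan coefficients, is established in Section~6. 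I expect the main obstacle to be precisely this last piece of bookkeeping: the intertwiners in Proposition~\ref{prp:eigenvalues} are only specified up to unitary multiples, Proposition~\ref{prp:HS-projection} and the branching rules each contribute their own normalising factors, and the real work is in checking that all of these assemble into the clean rational expressions claimed — the equivariance argument guarantees the shape of the answer, but not its numerology.
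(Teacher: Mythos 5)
Your outline follows essentially the same route as the paper: the kernel identities are read as operator identities $\sum_m M_{z_m}P^{\Upsilon}_{pqj}M_{\Bar{z}_m}=\sum\delta^{\Upsilon\Upsilon'}_{pp'qq'j}P^{\Upsilon'}_{p'q'j}$ (this is the content of Proposition~\ref{prp:TheoremProjectionv}, obtained there by averaging over $\group{U}(n)$), the surviving indices are identified via the Pieri-type branching of $\Upsilon_{pqj}\otimes\C^n$ and $\Upsilon_{pqj}\otimes{\C^n}^*$ (Lemma~\ref{lem:decomp-phi-psi}), the commutation of $\dbarb$ with $M_{z_m}$ gives $\delta^{\Psi\Psi}_{pp'qq'(j+1)}=(\lambda_{p'q'j}/\lambda_{pqj})^2\delta^{\Phi\Phi}_{pp'qq'j}$ (Corollary~\ref{cor:gammarelation}), and the base coefficients are extracted from highest weight forms, Weyl's formula and the Clebsch--Gordan symmetry of Section~6. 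Your one genuine variant is the cyclicity-of-trace identity $\delta^{\Upsilon\Upsilon'}_{pp'qq'j}\dim\Upsilon'_{p'q'j}=\Bar{\delta}^{\Upsilon'\Upsilon}_{p'pq'qj}\dim\Upsilon_{pqj}$, which is correct and neatly halves the table; the paper instead uses the sum rule of Corollary~\ref{cor:TheoremProjectionv} (the trace of the operator identity against the identity, using $\sum_m\labs z_m\rabs^2=1$). Note that you discard that sum rule, and it would save you the hardest of your remaining computations: once $\delta^{\Phi\Phi}_{p(p+1)qqj}$ and $\delta^{\Phi\Phi}_{ppq(q-1)j}$ are known, the mixing coefficient $\delta^{\Phi\Psi}_{ppqqj}$ falls out of $\sum_{(p',q',\Upsilon')}\delta^{\Upsilon\Upsilon'}_{pp'qq'j}\dim\Upsilon'_{p'q'j}=\dim\Upsilon_{pqj}$ with no further representation theory, which is exactly Step~4 of the paper's proof.

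Two points need repair. First, your branching bookkeeping is reversed: the span of $\Bar{z}_1,\dots,\Bar{z}_n$ transforms under the standard representation $\rho(1,\underline{0}_{n-1})$, not its contragredient, so $M_{\Bar{z}_m}$ \emph{raises} one entry of the highest weight and, in \eqref{eq:delta}, the surviving $\Upsilon'_{p'q'j}$ are those whose highest weight is obtained from that of $\Upsilon_{pqj}$ by \emph{lowering} one entry (compare $\delta^{\Phi\Phi}_{p(p+1)qqj}$: $-p\mapsto -p-1$); your two sign slips are self-consistent but would interchange the index sets of \eqref{eq:delta} and \eqref{eq:deltabar}. Second, and more substantively, the decisive quantitative step is deferred. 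Even for the ``clean'' coefficients $\delta^{\Phi\Phi}_{p(p+1)qqj}$ one must evaluate $\sum_m\lnorm P^{\Phi}_{(p+1)qj}(z_m\hwv^{\Phi}_{pqj})\rnorm^2$, and the projection of $e^*_m\otimes v_{pqj}$ onto the Cartan component of ${\C^n}^*\otimes V_{pqj}$ is not an ``elementary polynomial manipulation'' for $m\neq n$: this is precisely where the paper needs Lemma~\ref{lem:KlymikVilenkin} (via Proposition~\ref{prp:fracdimVpqj}), not only for the mixing coefficient as you suggest. Since the content of the theorem is the explicit table, the equivariance arguments fix only its shape; the proof is not complete until the norms \eqref{eq:phipqj-norm}--\eqref{eq:psipqj-norm}, the dimension ratios of Lemma~\ref{lem:collection}, and the Clebsch--Gordan normalisation are actually assembled into the stated fractions, which is the part of the argument you have left as a plan rather than carried out.
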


\begin{remark}\label{rem:symmetry}
Note that the coefficients in Theorem \ref{thm:Theorem-delta} are all real; the bar does not indicate complex conjugation.

The fractions $q/(q+j)$ and $(p+1)/(p+n-j)$ are interpreted as $1$ when they are of the form $0/0$.

The right-hand side fractions in each row are symmetric; one becomes the other when we  exchange $p$ with $q-1$ and $j$ with $n-1-j$.
This may be explained using representation theory, as follows.

In analogy with the Hodge star operator, one may define an isometric antilinear vector bundle isomorphism $*$ from $\Lambda^{0,j}$ to $\Lambda^{0,n-1-j}$ (see \cite[p.~5]{Ge}), which induces an operator, also denoted by $*$, from $\Forms{j}$ to $\Forms{n-1-j}$.
It is not difficult to check that $*$ intertwines:
\begin{enumerate}[(i)]
\item the standard representation $\pi_j$ of $\group{U}(n)$ on  $\Forms{j}$ and the representation $\hat\pi_{n-1-j}$ of $\group{U}(n)$ on $\Forms{n-1-j}$ given by
\[
\hat\pi_{n-1-j}(g) = (\det g)^{-1} \pi_{n-1-j}(g) \quad\text{for all $g \in \group{U}(n)$;}
\]
\item the operators $\boxb$ on $\Forms{j}$ and $\boxb$ on $\Forms{n-1-j}$ \cite[Lemma 1.1]{Ge};
\item the multiplication operators $M_m,\Bar{M}_m$ on $\Forms{j}$ and $\Bar{M}_m,M_m$ on $\Forms{n-1-j}$, where
\[
M_m f(z) = z_m f(z), \qquad \Bar{M}_m f(z) = \Bar{z}_m f(z).
\]
\end{enumerate}
If $J : \Forms{j} \to (\Forms{j})^*$ is the canonical antilinear isomorphism between a Hilbert space and its dual, then the composition $* J^{-1} : (\Forms{j})^* \to \Forms{n-1-j}$  is a linear isomorphism that intertwines the representation $\tilde\pi_j$ contragredient to $\pi_j$ and the representation $\hat\pi_{n-1-j}$.
Hence, by (i) and Schur's lemma, 
\begin{equation}\label{eqn:Hodge-star-on-Upsilon}
{*}(\Phi_{pqj}) = \Psi_{(q-1)(p+1)(n-1-j)}
\qquad\text{and}\qquad
{*}(\Psi_{pqj}) = \Phi_{(q-1)(p+1)(n-1-j)}
\end{equation}
for all $(p,q) \in I_j$.
Consequently (ii) justifies the symmetry
\begin{equation}\label{eq:eigenvaluesymmetry}
\lambda_{pqj} = \lambda_{(q-1)(p+1)(n-2-j)}
\end{equation}
of the eigenvalues of $\boxb$. 
Moreover, $\lip z,w \rip K_{pqj}^\Upsilon(z,w)$ and $\overline{\lip z,w \rip} K_{pqj}^\Upsilon(z,w)$ are the integral kernels of the operators $\sum_{m=1}^{n} M_m P^\Upsilon_{pqj} \Bar{M}_m$ and $\sum_{m=1}^{n} \Bar{M}_m P^\Upsilon_{pqj} M_m$, and so (iii) and \eqref{eqn:Hodge-star-on-Upsilon} lead to the conclusion that 
\[
\delta_{pp'qq'j}^{\Upsilon \Upsilon'} = \Bar{\delta}_{(q-1)(q'-1)(p+1)(p'+1)(n-1-j)}^{\Bar{\Upsilon} \Bar{\Upsilon}'} \,,
\]
where $\Bar{\Phi} = \Psi$ and $\Bar{\Psi} = \Phi$.
\end{remark}

The proof of Theorem \ref{thm:Theorem-delta} is based on the following preliminary result.

\begin{proposition}\label{prp:TheoremProjectionv}
Let $0 \leq j \leq n-1$.
The decompositions \eqref{eq:deltabar} and \eqref{eq:delta} hold, and
\begin{align*}
\Bar{\delta}_{pp'qq'j}^{\Upsilon\Upsilon'}
&=
\frac{\dim \Upsilon_{pqj}}{\dim \Upsilon'_{p'q'j}} \sum_{m=1}^{n} \frac{\lnorm P_{p'q'j}^{\Upsilon'} (\Bar{z}_m \alpha) \rnorm^2}{\lnorm\alpha\rnorm^2} \,, \\
\noalign{\noindent{and}}
\delta_{pp'qq'j}^{\Upsilon\Upsilon'}
&= \frac{\dim \Upsilon_{pqj}}{\dim \Upsilon'_{p'q'j}} \sum_{m=1}^{n} \frac{\lnorm P_{p'q'j}^{\Upsilon'} (z_m \alpha)  \rnorm^2} {\lnorm\alpha\rnorm^2},
\end{align*}
for all $(p,q,\Upsilon), (p',q',\Upsilon') \in I_j \times Y_j$; here $\alpha$ is any nonzero element of $\Upsilon_{pqj}$.
\end{proposition}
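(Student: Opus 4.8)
The plan is to reduce the multiplication-by-weight identities to statements about orthogonal projections, using the orthogonality relations of Proposition \ref{prp:HS-projection} as the bridge between integral kernels and Hilbert--Schmidt inner products. First I would establish the decompositions \eqref{eq:deltabar} and \eqref{eq:delta}: the function $z \mapsto \lip z,w\rip$ is a (restriction to $\Sphere$ of a) linear polynomial, and multiplication by it maps each finite-dimensional, $\group{U}(n)$-isotypic space $\Upsilon_{pqj}$ into the (finite) algebraic sum of those $\Upsilon'_{p'q'j}$ that appear in the tensor product of $\Upsilon_{pqj}$ with the defining representation of $\group{U}(n)$; hence $\overline{\lip z,w\rip}\, K^\Upsilon_{pqj}(z,\cdot)$, viewed as a section of the appropriate bundle, lies in $\bigoplus_{p'q'\Upsilon'} \Upsilon'_{p'q'j}$ with only finitely many nonzero components, and the same for $\lip z,w\rip$. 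Writing this finite expansion with as-yet-undetermined scalar coefficients $\Bar\delta^{\Upsilon\Upsilon'}_{pp'qq'j}$ (respectively $\delta^{\Upsilon\Upsilon'}_{pp'qq'j}$) gives \eqref{eq:deltabar} and \eqref{eq:delta}; that these coefficients are genuinely scalars (independent of the pair $(z,w)$) follows because each projection $P^{\Upsilon'}_{p'q'j}$ commutes, up to unitaries, with the $\group{U}(n)$-action, and multiplication by $\lip z,w\rip$ is $\group{U}(n)$-equivariant in the appropriate twisted sense.

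Next I would extract a formula for the coefficients. Fix $w$ and pair both sides of, say, \eqref{eq:deltabar} with $K^{\Upsilon'}_{p'q'j}(\cdot,w)$ in the Hilbert--Schmidt inner product along the fibre over $z$, then integrate $d\sigma(z)$ over $\Sphere$. By the orthogonality relations of Proposition \ref{prp:HS-projection}, the right-hand side collapses to $\Bar\delta^{\Upsilon\Upsilon'}_{pp'qq'j}\,\dim\Upsilon'_{p'q'j}/\sigma(\Sphere)$. The left-hand side is
\[
\int_\Sphere \overline{\lip z,w\rip}\, \biglip K^\Upsilon_{pqj}(z,w), K^{\Upsilon'}_{p'q'j}(z,w)\bigrip_{\HS}\, d\sigma(z).
\]
Here I would use the orthonormal-basis representation of the kernels (as in the proof of Proposition \ref{prp:HS-projection}): writing $K^\Upsilon_{pqj}(z,w)=\sum_r \lip\cdot,\alpha^r(w)\rip\,\alpha^r(z)$ with $\{\alpha^r\}$ an orthonormal basis of $\Upsilon_{pqj}$, and similarly $\{\beta^s\}$ for $\Upsilon'_{p'q'j}$, the Hilbert--Schmidt pairing becomes $\sum_{r,s}\lip\beta^s(w),\alpha^r(w)\rip\,\lip\alpha^r(z),\beta^s(z)\rip$. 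Multiplying by $\overline{\lip z,w\rip}=\sum_m z_m\overline{w_m}$, integrating in $z$, and recognising $\int_\Sphere \overline{z_m}\,\lip\alpha^r(z),\beta^s(z)\rip\,d\sigma(z) = \overline{\llangle \Bar z_m\, \beta^s, \alpha^r\rrangle}$, one reassembles the sum over $r$ of $\lnorm P^{\Upsilon'}_{p'q'j}(\Bar z_m \alpha^r)\rnorm^2$-type quantities; averaging over orthonormal bases and using $\group{U}(n)$-invariance (exactly as in the proof of Proposition \ref{prp:HS-projection}) lets one replace the sum over the orthonormal basis by $\dim\Upsilon_{pqj}$ times the value on a single normalised $\alpha$, which after dividing by the factor $\dim\Upsilon'_{p'q'j}/\sigma(\Sphere)$ from the right-hand side yields precisely
\[
\Bar\delta^{\Upsilon\Upsilon'}_{pp'qq'j} = \frac{\dim\Upsilon_{pqj}}{\dim\Upsilon'_{p'q'j}}\sum_{m=1}^n \frac{\lnorm P^{\Upsilon'}_{p'q'j}(\Bar z_m\alpha)\rnorm^2}{\lnorm\alpha\rnorm^2}.
\]
The identity for $\delta^{\Upsilon\Upsilon'}_{pp'qq'j}$ is obtained in the same way, pairing \eqref{eq:delta} instead and using $\lip z,w\rip=\sum_m z_m\overline{w_m}$ in place of its conjugate, so $\Bar z_m$ is replaced by $z_m$ throughout.

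The main obstacle I anticipate is the first step: rigorously justifying that multiplication by $\lip z,w\rip$ maps $\Upsilon_{pqj}$ into a \emph{finite} sum of the spaces $\Upsilon'_{p'q'j}$ with scalar coefficients. This needs the observation that $M_m$ (respectively $\Bar M_m$) intertwines the $\group{U}(n)$-representation on $\Forms{j}$ tensored with the standard (respectively conjugate-standard) representation of $\group{U}(n)$ into $\Forms{j}$, together with the fact that the relevant tensor products decompose into finitely many irreducibles (Pieri's rule), each of which is among the $\Upsilon'_{p'q'j}$ by Folland's decomposition; the multiplicity-free property of $\Forms{j}$ then forces each component to be hit by a scalar multiple of a unitary, whence the coefficients $\delta^{\Upsilon\Upsilon'}_{pp'qq'j}$ are well-defined scalars. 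Granting this structural input — which one expects to be spelled out in the later Section 6 on representation theory — the remainder is the bookkeeping with orthonormal bases and $\group{U}(n)$-averaging already rehearsed in the proof of Proposition \ref{prp:HS-projection}, and is routine.
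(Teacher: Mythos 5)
Your proposal is correct, but it reaches the coefficient formulae by a genuinely different route from the paper's. The paper never pairs against the projection kernels: it realises $K^\Upsilon_{pqj}$ itself as a $\group{U}(n)$-average of rank-one kernels $K_{\alpha,\alpha}$ (Schur's lemma gives $\tilde P_{\alpha,\alpha}=\lnorm\alpha\rnorm^2(\dim\Upsilon_{pqj})^{-1}P^\Upsilon_{pqj}$), uses the invariance $\lip z,w\rip=\lip\pi(g)z,\pi(g)w\rip$ to push the multiplication inside the average, so that $\lip z,w\rip K_{\alpha,\alpha}$ becomes $\sum_m K_{z_m\alpha,z_m\alpha}$, and then decomposes $z_m\alpha$ into its components and applies Schur once more; this yields the decomposition \eqref{eq:delta} and the coefficient formula in a single sweep. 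Your route --- first get \eqref{eq:delta} abstractly from the $\group{U}(n)$-equivariance and self-adjointness of $\sum_m M_mP^\Upsilon_{pqj}\Bar M_m$ together with multiplicity-freeness, then extract the coefficients by taking the $\HS$-pairing with $K^{\Upsilon'}_{p'q'j}(\dummy,w)$, integrating in $z$, and invoking Proposition \ref{prp:HS-projection} --- is more modular and does close up: after the $z$-integration one is left with $\sum_{m,r}w_m\biglip \bigl(P^{\Upsilon'}_{p'q'j}(\Bar z_m\alpha^r)\bigr)(w),\alpha^r(w)\bigrip$, and a further average over $w\in\Sphere$ (legitimate, since the other side of the identity is constant in $w$) converts this into $\sum_{m,r}\lnorm P^{\Upsilon'}_{p'q'j}(\Bar z_m\alpha^r)\rnorm^2$; the final reduction to a single $\alpha$ is Schur again, applied to the equivariant self-adjoint operator $\sum_m M_mP^{\Upsilon'}_{p'q'j}\Bar M_m$ restricted to $\Upsilon_{pqj}$. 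What the paper's argument buys is that existence of the decomposition and the value of the coefficients come out together; what yours buys is a clean separation of the two, with Proposition \ref{prp:HS-projection} doing the bookkeeping.

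Two small repairs. First, the identity you use should read $\int_\Sphere\Bar z_m\lip\alpha^r(z),\beta^s(z)\rip\,d\sigma(z)=\llangle\Bar z_m\alpha^r,\beta^s\rrangle$; as written, $\overline{\llangle\Bar z_m\beta^s,\alpha^r\rrangle}$ equals $\int_\Sphere z_m\lip\alpha^r(z),\beta^s(z)\rip\,d\sigma(z)$, which has the wrong conjugation and would assemble the wrong projections downstream. Second, the ``main obstacle'' you flag is lighter than you fear: for this proposition you need neither Pieri's rule nor finiteness of the sum (the paper does not establish finiteness at this stage either --- that is the later Lemma \ref{lem:decomp-phi-psi}); equivariance, self-adjointness and multiplicity-freeness already force $\sum_m\Bar M_mP^\Upsilon_{pqj}M_m$ to be an ($L^2$-convergent) real combination of the projections $P^{\Upsilon'}_{p'q'j}$, which is all that \eqref{eq:deltabar} asserts.
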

\begin{proof}
Let $\Upsilon',\Upsilon'' \in \TT_j$. 
For given $\alpha' \in \Upsilon'_{p'q'j}$ and $\alpha'' \in \Upsilon''_{p''q''j}$, define the operator $P_{\alpha',\alpha''} \in \Lin(\Forms{j})$ by $P_{\alpha',\alpha''}:= \lip \dummy  , \alpha'\rip \alpha''$.
Consider the average $\tilde P_{\alpha',\alpha''}$ over $\group{U}(n)$ of $P_{\alpha',\alpha''}$, that is,
\[
\tilde P_{\alpha',\alpha''} := \int_{\group{U}(n)} \pi (g)\,P_{\alpha',\alpha''} \, \pi (g)^{-1} \,dg\,,
\]
where $\pi$ is the representation of $\group{U}(n)$ on $\Forms{j}$ and $dg$ denotes the normalized Haar measure on $\group{U}(n)$. 
Then $\tilde P_{\alpha',\alpha''} \in \Lin(\Forms{j})$ is an intertwining operator; moreover $\tilde P_{\alpha',\alpha''}(\Upsilon'_{p'q'j}) \subseteq \Upsilon''_{p''q''j}$ and $\tilde P_{\alpha',\alpha''}\rest_{{\Upsilon'_{p'q'j}}^\perp} = 0$.  
Hence from Schur's lemma 
\[
\tilde P_{\alpha',\alpha''} = 
\begin{dcases}
\frac{\lip \alpha'', \alpha'\rip}{\dim \Upsilon'_{p'q'j}}
 P^{\Upsilon'}_{p'q'j} &\text{when $\Upsilon_{pqj} = \Upsilon'_{p'q'j}$,}\\
 0 &\text{otherwise,}
\end{dcases}
\]
where the first identification relies on the facts that $\tr \tilde P_{\alpha',\alpha''} = \tr P_{\alpha',\alpha''} = \lip \alpha'',\alpha'\rip$ and $\tr P^{\Upsilon'}_{p'q'j} = \dim \Upsilon'_{p'q'j}$.
Correspondingly, let $K_{\alpha',\alpha''}$ denote the kernel of the operator $\tilde P_{\alpha',\alpha''}$; then
\begin{equation}  \label{eq:relazione-nuclei}
\tilde K_{\alpha',\alpha''} =
\begin{dcases}
\frac{\lip \alpha'', \alpha'\rip}{\dim \Upsilon'_{p'q'j}}
 K^{\Upsilon'}_{p'q'j} &\text{when $\Upsilon_{pqj} = \Upsilon'_{p'q'j}$,} \\
 0 &\text{otherwise.}
\end{dcases}
\end{equation}
On the other hand, starting from the definition of $\tilde P_{\alpha',\alpha''}$ it  is easy to check that
\[
K_{\alpha',\alpha''}(z,w)= \int_{\group{U}(n)}\lip \dummy  , \pi(g) \alpha'(w)\rip \, \pi (g) \alpha''(z) \, dg\,.
\]

If now $z_m \alpha$ is decomposed as
\[
z_m \alpha=\sum_{\Upsilon' \in \TT_j} \sum_{p',q'} \alpha^{\Upsilon'}_{p'q'm},
\]
where $\alpha^{\Upsilon'}_{p'q'm} = P^{\Upsilon'}_{p'q'j}(z_m \alpha) \in \Upsilon'_{p'q'j}$, then we observe that
\[\begin{aligned}
\lip z,w\rip K_{\alpha,\alpha} (z,w) 
&= \int_{\group{U}(n)} \lip z,w\rip \, \lip \dummy  , \pi(g) \alpha(w)\rip \, \pi (g) \alpha(z) \, dg\\
&=\int_{\group{U}(n)} \lip \pi(g) z,\pi (g) w\rip \, \lip \dummy  , \pi(g) \alpha(w)\rip \, \pi (g) \alpha(z) dg\\
&=\sum_{m=1}^{n} \int_{\group{U}(n)} \pi(g) z_m \, \overline{\pi (g) w_m}  \, \lip \dummy  , \pi(g) \alpha(w)\rip \, \pi (g) \alpha(z) \, dg\\
&=\sum_{m=1}^{n} \int_{\group{U}(n)} \lip \dummy  , \pi(g) (w_m \alpha)(w)\rip \, \pi(g)( z_m \alpha)(z) \, dg\\
&=\sum_{m=1}^{n} \sum_{\substack{(p',q',\Upsilon') \in I_j \times Y_j \\ (p'',q'',\Upsilon'') \in I_j \times Y_j}} \int_{\group{U}(n)} \biglip \dummy  ,  \pi(g)  \alpha^{\Upsilon'}_{p'q'm}(w) \bigrip \, \pi(g)  \alpha^{\Upsilon''}_{p''q''m}(z) \,dg \\
&= \sum_{m=1}^{n} \sum_{\substack{(p',q',\Upsilon') \in I_j \times Y_j\\ (p'',q'',\Upsilon'') \in I_j \times Y_j}} K_{\alpha_{p'q'm}^{\Upsilon'},\alpha_{p''q''}^{\Upsilon''}}(z,w).
 \end{aligned}\]
By \eqref{eq:relazione-nuclei} the summands in the above expression vanish unless $\Upsilon'_{p'q'j} = \Upsilon''_{p''q''j}$ and we deduce that
\[
\lip z,w\rip
\frac{\lnorm\alpha\rnorm^2}{\dim \Upsilon_{pqj}} \, K_{pqj}^\Upsilon(z,w) 
= \sum_{m=1}^{n} \sum_{(p',q',\Upsilon') \in I_j \times Y_j} \frac{\lnorm\alpha^{\Upsilon'}_{p'q'm}\rnorm^2}{\dim \Upsilon'_{p'q'j}} \, K_{p'q'j}^{\Upsilon'}(z,w),
\]
whence the desired formula for $\delta^{\Upsilon\Upsilon'}_{pp'qq'j}$ follows.
The formula for $\Bar{\delta}^{\Upsilon\Upsilon'}_{pp'qq'j}$ may be proved analogously.
\end{proof}

\begin{corollary}\label{cor:TheoremProjectionv}
Let $0 \leq j \leq n-1$.
With the notation and conventions of Proposition~\ref{prp:TheoremProjectionv},
\[
\sum_{(p,q,\Upsilon) \in I_j \times Y_j} \frac{\dim \Upsilon'_{p'q'j}}{\dim \Upsilon_{pqj}} \, \Bar{\delta}_{pp'qq'j}^{\Upsilon\Upsilon'}
= \sum_{(p,q,\Upsilon) \in I_j \times Y_j}  \frac{\dim \Upsilon'_{p'q'j}}{\dim \Upsilon_{pqj}} \, \delta_{pp'qq'j}^{\Upsilon\Upsilon'} = 1.
\]
\end{corollary}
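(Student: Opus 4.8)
The plan is to obtain this as an immediate consequence of Proposition~\ref{prp:TheoremProjectionv}, using the completeness of the orthogonal decomposition of $\Forms{j}$ into the subspaces $\Upsilon'_{p'q'j}$, $(p',q',\Upsilon')\in I_j\times Y_j$, together with the elementary identity $\sum_{m=1}^{n}\labs z_m\rabs^{2}=1$ on $\Sphere$.

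Fix $(p,q,\Upsilon)\in I_j\times Y_j$ and a nonzero $\alpha\in\Upsilon_{pqj}$; since $\alpha$ is a smooth section of $\Lambda^{0,j}$, the products $z_m\alpha$ and $\Bar{z}_m\alpha$ are again (smooth, hence $L^2$) elements of $\Forms{j}$. By Proposition~\ref{prp:TheoremProjectionv},
\[
\frac{\dim\Upsilon'_{p'q'j}}{\dim\Upsilon_{pqj}}\,\delta_{pp'qq'j}^{\Upsilon\Upsilon'}=\sum_{m=1}^{n}\frac{\lnorm P_{p'q'j}^{\Upsilon'}(z_m\alpha)\rnorm^{2}}{\lnorm\alpha\rnorm^{2}}
\]
for every $(p',q',\Upsilon')\in I_j\times Y_j$, and likewise for $\Bar{\delta}$ with $\Bar{z}_m$ in place of $z_m$. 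Summing over $(p',q',\Upsilon')\in I_j\times Y_j$ and interchanging this sum with the finite sum over $m$ --- legitimate since the terms are nonnegative, and in fact only finitely many are nonzero by Theorem~\ref{thm:Theorem-delta} --- I would invoke Parseval's identity for the decomposition $\Forms{j}=\bigoplus_{(p',q',\Upsilon')\in I_j\times Y_j}\Upsilon'_{p'q'j}$, applied to $z_m\alpha$ (resp. $\Bar{z}_m\alpha$):
\[
\sum_{(p',q',\Upsilon')\in I_j\times Y_j}\lnorm P_{p'q'j}^{\Upsilon'}(z_m\alpha)\rnorm^{2}=\lnorm z_m\alpha\rnorm^{2}.
\]
This reduces both sums to $\lnorm\alpha\rnorm^{-2}\sum_{m=1}^{n}\lnorm z_m\alpha\rnorm^{2}$ (respectively with $\Bar{z}_m$).

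Finally, since $\lnorm z_m\alpha\rnorm^{2}=\int_{\Sphere}\labs z_m\rabs^{2}\labs\alpha(z)\rabs^{2}\,d\sigma(z)$, summing over $m$ and using $\sum_{m=1}^{n}\labs z_m\rabs^{2}=\lip z,z\rip=1$ for $z\in\Sphere$ (and likewise $\sum_{m=1}^{n}\labs\Bar{z}_m\rabs^{2}=1$) gives $\sum_{m=1}^{n}\lnorm z_m\alpha\rnorm^{2}=\int_{\Sphere}\labs\alpha(z)\rabs^{2}\,d\sigma(z)=\lnorm\alpha\rnorm^{2}$, so each sum equals $1$. The same argument run with the roles of the two index triples exchanged --- using instead that, by \eqref{eq:delta} and the identification in Remark~\ref{rem:symmetry} of $\lip z,w\rip K_{pqj}^{\Upsilon}(z,w)$ as the kernel of $\sum_{m}M_m P_{pqj}^{\Upsilon}\Bar{M}_m$, one has $\delta_{pp'qq'j}^{\Upsilon\Upsilon'}=\lnorm\beta\rnorm^{-2}\sum_{m=1}^{n}\lnorm P_{pqj}^{\Upsilon}(\Bar{z}_m\beta)\rnorm^{2}$ for any nonzero $\beta\in\Upsilon'_{p'q'j}$ --- yields $\sum_{(p,q,\Upsilon)\in I_j\times Y_j}\delta_{pp'qq'j}^{\Upsilon\Upsilon'}=1$ in the same way.

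I do not foresee a real obstacle: granted Proposition~\ref{prp:TheoremProjectionv}, the whole thing is a one-line computation. The only points that deserve a word are that $z_m\alpha,\Bar{z}_m\alpha\in\Forms{j}$, so that Parseval is applicable, and the --- entirely routine --- justification of the interchange of the two summations.
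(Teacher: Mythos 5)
Your main computation is correct and it is precisely the paper's proof: the paper's one\-/line argument is the observation that $\sum_{m=1}^n\lnorm z_m\alpha\rnorm^2=\sum_{m=1}^n\lnorm\Bar{z}_m\alpha\rnorm^2=\lnorm\alpha\rnorm^2$, with the summation of the formulae of Proposition~\ref{prp:TheoremProjectionv} over the orthogonal decomposition of $\Forms{j}$ (Parseval) left implicit, exactly as you spell it out. One point deserves attention, and your proposal shows you half\-/noticed it: what this argument proves is
\[
\sum_{(p',q',\Upsilon')\in I_j\times Y_j}\frac{\dim\Upsilon'_{p'q'j}}{\dim\Upsilon_{pqj}}\,\delta^{\Upsilon\Upsilon'}_{pp'qq'j}=1
\]
for each \emph{fixed} $(p,q,\Upsilon)$, i.e.\ the sum runs over the primed (target) triple; this is also the form in which the corollary is invoked in Steps 2 and 4 of the proof of Theorem~\ref{thm:Theorem-delta}. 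The statement as printed, with the sum over $(p,q,\Upsilon)$ but the same ratio $\dim\Upsilon'_{p'q'j}/\dim\Upsilon_{pqj}$, is not what either of your computations yields, and it is in fact false: for instance, with $n=3$, $j=1$, $\Upsilon'=\Phi$ and $(p',q')=(0,1)$, the only nonzero term would be $(\dim\Phi_{011}/\dim\Phi_{021})\cdot 1=3/8$. So the summation variable in the printed statement is a typo for $(p',q',\Upsilon')$. Your second argument --- dualising via the operator identity $\sum_m M_mP^{\Upsilon}_{pqj}\Bar{M}_m=\sum_{(p',q',\Upsilon')}\delta^{\Upsilon\Upsilon'}_{pp'qq'j}P^{\Upsilon'}_{p'q'j}$ and pairing against $\beta\in\Upsilon'_{p'q'j}$ --- is also correct, but as you yourself wrote it, it gives $\sum_{(p,q,\Upsilon)}\delta^{\Upsilon\Upsilon'}_{pp'qq'j}=1$ \emph{without} the dimension ratio: a true but genuinely different identity, not the printed one. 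It would be worth stating explicitly that the two sums carry different weights, rather than presenting the second as ``the same argument with the roles exchanged.''
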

\begin{proof}
It is sufficient to observe that
\[
\sum_{m=1}^{n} \lnorm\Bar{z}_m \alpha\rnorm^2 = \sum_{m=1}^{n} \lnorm z_m \alpha\rnorm^2 = \sum_{m=1}^{n} \int_{\Sphere} \labs z_m\rabs^2 \, \labs \alpha(z)\rabs^2 \,d\sigma(z) = \lnorm\alpha\rnorm^2
\]
for all $\alpha \in \Upsilon_{pqj}$.
\end{proof}

\begin{corollary}\label{cor:gammarelation}
Let $0 \leq j \leq n-2$.
If $(p,q),(p',q') \in I_j \cap I_{j+1}$, then
\[
\delta^{\Psi\Psi}_{pp'qq'(j+1)} = \frac{\lambda_{p'q'j}^2}{\lambda_{pqj}^2} \, \delta^{\Phi\Phi}_{pp'qq'j} \qquad\text{and}\qquad 
\Bar{\delta}^{\Phi\Phi}_{pp'qq'j} = \frac{\lambda_{p'q'j}^2}{\lambda_{pqj}^2} \, \Bar{\delta}^{\Psi\Psi}_{pp'qq'(j+1)}.
\]
\end{corollary}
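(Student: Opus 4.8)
The plan is to use the operators $\dbarb$ and $\dbarba$ to transport the identities of Proposition~\ref{prp:TheoremProjectionv} from level $j$ to level $j+1$. Two facts will be needed. First, since $z_m$ is the restriction to $\Sphere$ of a holomorphic function on $\C^n$, one has $\dbarb z_m = 0$ (with the Hermitian metric, $(1,0)$-forms are orthogonal to $\Lambda^{0,1} = \Bar{\CRL}^*$), so the Leibniz rule $\dbarb(f\alpha) = (\dbarb f) \wedge \alpha + f\dbarb\alpha$ reduces to $\dbarb(z_m \alpha) = z_m \dbarb\alpha$, that is, $\dbarb M_m = M_m \dbarb$; taking formal adjoints yields $\dbarba \Bar{M}_m = \Bar{M}_m \dbarba$. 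Second, by the $\group{U}(n)$-equivariance of $\dbarb$ together with Proposition~\ref{prp:eigenvalues}(i)--(ii), every $\gamma \in \Forms{j}$ satisfies $\dbarb\gamma = \sum_{(a,b) \in I_j} \dbarb P^\Phi_{abj}\gamma$ with each summand lying in $\Psi_{ab(j+1)}$; projecting onto the $(p',q')$-component (which is legitimate since $(p',q') \in I_j \cap I_{j+1}$) gives $P^\Psi_{p'q'(j+1)} \dbarb = \dbarb P^\Phi_{p'q'j}$ on $\Forms{j}$, and similarly $P^\Phi_{p'q'j} \dbarba = \dbarba P^\Psi_{p'q'(j+1)}$ on $\Forms{j+1}$ by Proposition~\ref{prp:eigenvalues}(iii)--(iv).

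Next I would fix $(p,q),(p',q') \in I_j \cap I_{j+1}$ and note that in every case this forces $q, q' \geq 1$, whence (since $0 \leq j \leq n-2$) $\lambda_{pqj}, \lambda_{p'q'j} > 0$; in particular $\dbarb\rest_{\Phi_{pqj}}$ and $\dbarb\rest_{\Phi_{p'q'j}}$ are bijective, so that $\dim \Phi_{pqj} = \dim \Psi_{pq(j+1)}$ and $\dim \Phi_{p'q'j} = \dim \Psi_{p'q'(j+1)}$. Then I would choose a nonzero $\alpha \in \Phi_{pqj}$ and set $\beta = \dbarb\alpha \in \Psi_{pq(j+1)}$, so that $\lnorm\beta\rnorm^2 = \lambda_{pqj}^2 \lnorm\alpha\rnorm^2$. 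Combining the two facts above, $P^\Psi_{p'q'(j+1)}(z_m \beta) = P^\Psi_{p'q'(j+1)} \dbarb(z_m \alpha) = \dbarb\bigl(P^\Phi_{p'q'j}(z_m \alpha)\bigr)$, and since $\dbarb\rest_{\Phi_{p'q'j}}$ is $\lambda_{p'q'j}$ times a unitary, this gives $\lnorm P^\Psi_{p'q'(j+1)}(z_m \beta)\rnorm^2 = \lambda_{p'q'j}^2 \lnorm P^\Phi_{p'q'j}(z_m \alpha)\rnorm^2$. Summing over $m$ and inserting this into the expressions of Proposition~\ref{prp:TheoremProjectionv} for $\delta^{\Psi\Psi}_{pp'qq'(j+1)}$ and $\delta^{\Phi\Phi}_{pp'qq'j}$, the dimension ratios cancel and the first identity follows. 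The second identity is obtained in the same way, this time starting from a nonzero $\beta \in \Psi_{pq(j+1)}$, setting $\alpha = \dbarba\beta \in \Phi_{pqj}$ (so $\lnorm\alpha\rnorm^2 = \lambda_{pqj}^2 \lnorm\beta\rnorm^2$), and using $\dbarba \Bar{M}_m = \Bar{M}_m \dbarba$ together with $P^\Phi_{p'q'j} \dbarba = \dbarba P^\Psi_{p'q'(j+1)}$.

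The computation is short, and I expect the only delicate points to be the justification of the commutation $\dbarb M_m = M_m \dbarb$ — which rests on the Leibniz rule for $\dbarb$ and on $z_m$ restricted to $\Sphere$ being a CR function — and the bookkeeping needed to check that the hypothesis $(p,q),(p',q') \in I_j \cap I_{j+1}$ indeed makes all the relevant eigenvalues $\lambda_{\cdot\cdot j}$ nonzero, which is exactly what legitimises the bijectivity and dimension claims and the divisions by $\lambda_{pqj}^2$.
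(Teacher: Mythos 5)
Your argument is correct and is essentially the paper's own proof: both rest on $\dbarb z_m=0$ giving the commutation $\dbarb M_m=M_m\dbarb$ (and its adjoint), the $\group{U}(n)$-equivariance identity $P^\Psi_{p'q'(j+1)}\dbarb=\dbarb P^\Phi_{p'q'j}$, Proposition~\ref{prp:eigenvalues} to convert norms by factors of $\lambda^2$, and substitution into the formulae of Proposition~\ref{prp:TheoremProjectionv}. Your extra bookkeeping (that $(p,q),(p',q')\in I_j\cap I_{j+1}$ forces $q,q'\geq 1$ and hence nonzero eigenvalues, so that $\dbarb\rest_{\Phi_{pqj}}$ is bijective and the dimension ratios match) only makes explicit what the paper leaves implicit.
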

\begin{proof}
Note that $\dbarb z_m = 0$ and consequently $\dbarb (z_m \alpha ) = z_m \, \dbarb \alpha$ for all forms $\alpha$.
We deduce that $\dbarb$ commutes with multiplication by $z_m$ and, by considering the formal adjoints, $\dbarba$ commutes with multiplication by $\Bar{z}_m$.

If $\alpha \in \Phi_{pqj}$, then $\dbarb \alpha \in \Psi_{pq(j+1)}$ and
\[
\lnorm \dbarb \alpha \rnorm^2 = \lambda_{pqj}^2 \lnorm \alpha \rnorm^2,
\]
by Proposition \ref{prp:eigenvalues}. 
On the other hand, $\dbarb : \Forms{j} \to \Forms{j+1}$ is $\group{U}(n)$-equivariant, hence
\begin{align*}
\lnorm P^\Psi_{p'q'(j+1)} (z_m \dbarb \alpha) \rnorm^2 
&= \lnorm P^\Psi_{p'q'(j+1)} (\dbarb (z_m \alpha)) \rnorm^2 \\
&= \lnorm \dbarb P^\Phi_{p'q'j} (z_m \alpha) \rnorm^2 
   = \lambda_{p'q'j}^2 \lnorm P^\Phi_{p'q'j} (z_m \alpha) \rnorm^2,
\end{align*}
again by Proposition \ref{prp:eigenvalues}.
Consequently, by Proposition~\ref{prp:TheoremProjectionv},
\begin{align*}
\delta_{pp'qq'(j+1)}^{\Psi\Psi}
&= \frac{\dim \Psi_{pq(j+1)}}{\dim \Psi_{p'q'(j+1)}} \sum_{m=1}^{n} \frac{\lnorm P_{p'q'j}^{\Psi} (z_m \dbarb \alpha) \rnorm^2}{\lnorm\dbarb \alpha\rnorm^2}
\\
&= \frac{\dim \Phi_{pqj}}{\dim \Phi_{p'q'j}}  \frac{\lambda_{p'q'j}^2}{\lambda_{pqj}^2} \sum_{m=1}^{n} \frac{\lnorm P_{p'q'j}^{\Psi} (z_m \dbarb \alpha) \rnorm^2}{\lnorm\dbarb \alpha\rnorm^2} =  \frac{\lambda_{p'q'j}^2}{\lambda_{pqj}^2} \delta_{pp'qq'j}^{\Phi\Phi}.
\end{align*}
The proof of the other formula exploits the properties of $\dbarba$ and is analogous.
\end{proof}

According to Proposition~\ref{prp:TheoremProjectionv}, the coefficients in the decompositions \eqref{eq:deltabar} and \eqref{eq:delta} will be determined once we compute the dimensions of the spaces $\Phi_{pqj}$ and $\Psi_{pqj}$ and we know how the product of an element of $\Phi_{pqj}$ or $\Psi_{pqj}$ with $z_m$ or $\Bar{z}_m$ decomposes as a sum of elements of $\Phi_{p'q'j}$ and $\Psi_{p'q'j}$.
The first problem is easily solved by an application of Weyl's dimension formula (see, for example, \cite[\S 7.1.4, ex.\ 8]{GW2}).

\begin{lemma}\label{lem:dimensions}
Let $0\le j\le n-2$.
Then $\dim \Phi_{pqj}$ is given by
\[
\frac{p+1}{p+n-1-j} \cdot \frac{q}{q+j} \cdot \frac{p+q+n-1}{n-1} \, \binom{n-2}{j} \binom{p+n-1}{n-2} \binom{q+n-2}{n-2} \,.
\]
for all $(p,q)\in I_j$.
The same expression also gives $\dim \Psi_{pq(j+1)}$ for all $(p,q)\in I_{j+1}$.
\end{lemma}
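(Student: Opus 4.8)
The plan is to read off the highest weights from Folland's parametrisation and apply Weyl's dimension formula. First I would note that, on replacing $j$ by $j+1$ in the description of $\Psi$, the space $\Psi_{pq(j+1)}$ affords the representation $\rho(q,\underline{1}_j,\underline{0}_{n-2-j},-p)$ --- exactly the representation afforded by $\Phi_{pqj}$ --- so the two spaces are isomorphic as $\group{U}(n)$-modules and in particular have equal dimension; it therefore suffices to compute $\dim\Phi_{pqj}$.

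Next I would set $\nu_i := \ell_i + n - i$, where $\ell = (q,\underline{1}_j,\underline{0}_{n-2-j},-p)$ is the highest weight of $\Phi_{pqj}$, and record the structural fact on which everything rests: $\nu_1 = q+n-1$, $\nu_n = -p$, and as $i$ runs from $2$ to $n-1$ the shifted weight $\nu_i$ runs exactly once through each element of $\{1,\dots,n-1\}\setminus\{n-1-j\}$. (The constraint $q\ge 1$ when $j\ge 1$, built into the index set $I_j$, ensures that $\nu_1$ is distinct from all the other $\nu_i$, so that Weyl's formula applies in the usual form.) Weyl's dimension formula then reads $\dim\Phi_{pqj} = \prod_{1\le i<k\le n}(\nu_i-\nu_k)/(k-i)$, and the remaining work is to evaluate this product.

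I would split the product into three parts: the pairs with $i=1$, the pairs with $k=n$ and $i\ge 2$, and the pairs with $2\le i<k\le n-1$. For the first part, $\prod_{k=2}^{n}(\nu_1-\nu_k)$ is $(p+q+n-1)$ times the product of the elements of $\{q,q+1,\dots,q+n-2\}$ with the single entry $q+j$ removed, so after dividing by $(n-1)!$ this part collapses to $\frac{q}{q+j}\cdot\frac{p+q+n-1}{n-1}\binom{q+n-2}{n-2}$. For the second part, $\prod_{i=2}^{n-1}(\nu_i+p)$ is the product of the elements of $\{p+1,\dots,p+n-1\}$ with the entry $p+n-1-j$ removed, and dividing by $(n-2)!$ collapses it to $\frac{p+1}{p+n-1-j}\binom{p+n-1}{n-2}$. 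For the third part, pairs with $i$ and $k$ both in the block of $1$'s (indices $2,\dots,j+1$) or both in the block of $0$'s (indices $j+2,\dots,n-1$) contribute $1$ each, because there $\nu_i-\nu_k = k-i$; a pair with $i$ in the first block and $k$ in the second contributes $(k-i+1)/(k-i)$, and telescoping first over $k$ and then over $i$ turns the product of these factors into $\binom{n-2}{j}$. Multiplying the three parts gives the expression in the statement.

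The main obstacle will be the bookkeeping in the third part, together with keeping careful track of the ``missing'' shifted value $n-1-j$ in the first two parts. I would also need to check the degenerate cases $j=0$ and $j=n-2$ (where one of the two blocks is empty) and the value $p=-1$ (which arises only when $j=n-2$, through $I_{n-1}$), and to verify that the $0/0$ conventions noted in Remark \ref{rem:symmetry} then produce the correct answer; but in each case the Weyl-formula computation yields the value directly, so these are only a matter of care, not of substance.
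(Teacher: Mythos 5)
Your proposal is correct and follows exactly the route the paper intends: the paper proves this lemma simply by invoking Weyl's dimension formula (citing \cite{GW2}), and your computation of the shifted weights $\nu_i$, the three-way splitting of the product, and the handling of the $0/0$ conventions all check out. The identification $\dim\Psi_{pq(j+1)}=\dim\Phi_{pqj}$ via the common highest weight $(q,\underline{1}_j,\underline{0}_{n-2-j},-p)$ is likewise the paper's implicit argument.
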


\begin{remark}
The fractions $(p+1)/(p+n-1-j)$ and $q/(q+j)$ are interpreted as $1$ when they are of the form $0/0$.
\end{remark}

For the second problem, instead, we need a more explicit description of the spaces $\Phi_{pqj}$ and $\Psi_{pqj}$. 
Following \cite{Fo}, the representation $\rho(q,\underline{1}_k,\underline{0}_{n-2-k},-p)$ may be identified with a subrepresentation of the canonical representation of $\group{U}(n)$ on
\begin{equation*}
C_{pqk}=\Bigl(
\bigotimes{}^p\, {\C^{n}}^*\Bigr)\otimes
\Bigl( \Lambda^{k+1}\C^n\Bigr)\otimes
\Bigl( \bigotimes{ }^{q-1}\,\C^n\Bigr)\,
\end{equation*}
(here  $0\le k\le n-2$, $ p \geq 0$ and $q \geq 1$).
An  orthonormal basis  for $C_{pqk}$ is given by
\begin{align*}
&\Biglset e^*_{a_1}\otimes \ldots\otimes e^*_{a_p}\otimes \bigl( e_{c_1}\wedge\ldots \wedge e_{c_{k+1}}\bigr) \otimes  e_{b_1}\otimes \ldots \otimes  e_{b_{q-1}}\,: \\
&\qquad\qquad\qquad\qquad
 1\leq a_i\leq n,  1\leq c_1\leq \ldots \leq c_{k+1}\leq n\,,\, 1\leq b_i \leq n \Bigrset\,,
\end{align*}
where $\lset e_i :\, 1\leq i \leq n \rset$ and $\lset e_i^* :\, 1\leq i \leq n \rset$ denote the canonical basis for $\C^n$ and the corresponding dual basis of ${\C^n}^*$.
The space $V_{pqk}$ on which $\rho(q,\underline{1}_k,\underline{0}_{n-2-k},-p)$ acts, that is, $V(q,\underline{1}_k,\underline{0}_{n-2-k},-p)$ in the notation of \cite{Fo}, is the smallest $\group{U}(n)$-invariant subspace of $C_{pqk}$ containing the ``primitive vector''
\begin{equation*}
v_{pqk} 
= \Bigl(\underbrace{ e^*_{n}\otimes\ldots\otimes e^*_n}_{\text{$p$ times}}\Bigr) \otimes \bigl( e_{1}\wedge\ldots \wedge e_{k+1}\bigr) \otimes  \bigl(\underbrace{ e_{1}\otimes\ldots \otimes\ e_{1}}_{\text{$q-1$ times}}\bigr)
\end{equation*}
($P(q,\underline{1}_k,\underline{0}_{n-2-k},-p)$ in the notation of \cite{Fo}).

Similarly one may define $V_{p00}$ (for $p \geq 0)$ and $V_{(-1)q(n-2)}$ (for $q \geq 1$) as the smallest $\group{U}(n)$-invariant subspaces of
\begin{equation*}
C_{p00}
= \bigotimes{ }^p\, {\C^{n}}^*\, 
\qquad\text{and}\qquad
C_{(-1)q(n-2)}
= \Bigl( \Lambda^{n}\C^n\Bigr)\otimes \Bigl( \bigotimes{ }^{q-1}\,\C^n\Bigr)\,
\end{equation*}
containing the primitive vectors
\[
v_{p00} = \underbrace{ e^*_{n}\otimes\ldots\otimes e^*_n}_{\text{$p$ times}}
\quad\text{and}\quad
v_{(-1)q(n-2)} = \bigl(e_1 \wedge \dots \wedge e_n\bigr) \otimes \bigl(\underbrace{  e_{1}\otimes\ldots \otimes e_{1}}_{\text{$q-1$ times}}\bigr)
\]
respectively, and identify $\rho(\underline{0}_{n-1},-p)$ and $\rho(q,\underline{1}_{n-1})$ with the corresponding restrictions of the canonical representations of $\group{U}(n)$.
The sets
\begin{gather*}
\lset e^*_{a_1}\otimes \ldots\otimes e^*_{a_p} \,: \,  1\leq a_i\leq n  \rset
\end{gather*}
and
\begin{gather*}
\lset \bigl( e_{1}\wedge\ldots \wedge e_{n}\bigr) \otimes  e_{b_1}\otimes \ldots \otimes  e_{b_{q-1}}\,: \, 1\leq b_i \leq n \rset
\end{gather*}
are bases for $C_{p00}$ and $C_{(-1) q (n-2)}$ respectively.

Folland identified the spaces $\Phi_{pqj}$ and $\Psi_{pqj}$ with subspaces and quotient spaces of the spaces $C_{pqk}$. 
The index $k$ is equal to $j$ in the case of $\Phi$ and to $j-1$ in the case of $\Psi$. 
The statements of the identifications are a little nicer with a bit more notation: we set $C^\Phi_{pqj} = C_{pqj} $  and $C^\Psi_{pqj} = C_{pq(j-1)}$, and let $v^\Phi_{pqj} = v_{pqj}$ and $v^\Psi_{pqj} = v_{pq(j-1)}$.

Now, when $p\ge 0$ and $q\ge 1$,  we define the map $F^{\Phi}_{pqj}\,:\, C^\Phi_{pqj}\to  \Forms{j}$ by
\begin{align*}
&F^{\Phi}_{pqj}\bigl( e^*_{a_1}\otimes \ldots\otimes e^*_{a_p}\otimes \bigl( e_{c_{1}}\wedge\ldots \wedge e_{c_{j+1}}\bigr) \otimes  e_{b_1}\otimes \ldots \otimes  e_{b_{q-1}}  \bigr)\\
&\qquad =
z_{a_1}\ldots z_{a_p} \Bar{z}_{b_1} \ldots \Bar{z}_{b_{q-1}} \sum_{i=1}^{j+1}(-1)^{i-1} \Bar{z}_{c_{i}} {\zeta}_{c_1}\wedge \ldots    \wedge{\zeta}_{c_{i-1}}  \wedge{\zeta}_{c_{i+1}} \wedge \ldots \wedge {\zeta}_{c_{j+1}}\,,
\end{align*}
(when $j \leq n-2$) and the map  $F^{\Psi}_{pqj}\,:\, C^\Psi_{pqj}\to \Forms{j}$ by
\begin{align*}
&F^{\Psi}_{pqj}\bigl(
e^*_{a_1}\otimes \ldots\otimes e^*_{a_p}\otimes
\bigl( e_{c_1}\wedge\ldots \wedge e_{c_{j}}\bigr) \otimes
 e_{b_1}\otimes \ldots \otimes  e_{b_{q-1}}
 \bigr)\\
&\qquad =
z_{a_1}\ldots z_{a_p}
\Bar{z}_{b_1}
\ldots
\Bar{z}_{b_{q-1}}
{\zeta}_{c_1}\wedge
\ldots \wedge {\zeta}_{c_{j}}
\end{align*}
(when $j \geq 1$), where $\zeta_c = \dbarb \Bar{z}_c$, when $c=1,\dots,n$ (see \cite{Fo}).
Analogously, we set
\[
F^{\Phi}_{p00}\bigl( e^*_{a_1}\otimes \ldots\otimes e^*_{a_p}\bigr)  = z_{a_1}\ldots z_{a_p}\,
\]
and
\begin{align*}
&F^{\Psi}_{(-1)q(n-1)}
\bigl( \bigl( e_{1}\wedge\ldots \wedge e_{n}\bigr) \otimes e_{b_1}\otimes \ldots \otimes  e_{b_{q-1}}
 \bigr)\\
 &\qquad=
\Bar{z}_{b_1} \ldots \Bar{z}_{b_{q-1}} \sum_{i=1}^{n} (-1)^{i+n} \Bar{z}_{i} {\zeta}_{1}\wedge \ldots  \wedge{\zeta}_{i-1}  \wedge{\zeta}_{i+1}  \wedge \ldots \wedge {\zeta}_{n}\,.
\end{align*}
We define $\hwv^{\Upsilon}_{pqj} =  F^{\Upsilon}_{pqj}(v^\Upsilon_{pqj})$; the $\hwv^{\Upsilon}_{pqj}$ are called ``highest weight forms''.

Using Schur's lemma, Folland identified the subspaces $\Upsilon_{pqj}$ in terms of the maps $F^{\Upsilon}_{pqj}$.

\begin{proposition}[{\cite[Theorem 3, Theorem 5 and following]{Fo}}]
Let $0 \leq j \leq n-1$.
For all $(p,q,\Upsilon) \in I_j \times Y_j$, the map $F^{\Upsilon}_{pqj}$ intertwines the $\group{U}(n)$-actions on $C^\Upsilon_{pqj}$ and $\Upsilon_{pqj}$.
Moreover the restrictions  ${F^{\Upsilon}_{pqj}}\rest_{V^\Upsilon_{pqj}}$ are nonzero, since
\begin{align}
\lnorm\hwv^{\Phi}_{pqj}\rnorm^2 &= \frac{2^{j+1}\pi^n p! \, (q-1)!}{(p+q+n-1)!}(q+j)\,\label{eq:phipqj-norm}
\\
\noalign{\noindent{and}}
\lnorm\hwv^{\Psi}_{pqj}\rnorm^2 &= \frac{2^{j+1}\pi^n p! \, (q-1)!}{(p+q+n-1)!}(p+n-j)\label{eq:psipqj-norm}\,.
\end{align}
In particular,
\[
F^{\Upsilon}_{pqj}(V^\Upsilon_{pqj}) = \Upsilon_{pqj} .
\]
\end{proposition}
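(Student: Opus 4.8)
The plan is to establish the three assertions in turn; the analytic core is the pair of norm identities \eqref{eq:phipqj-norm}--\eqref{eq:psipqj-norm}, and everything else then follows from Schur's lemma together with the multiplicity-free decomposition of $\Forms{j}$ recalled above.

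\emph{Equivariance of $F^\Upsilon_{pqj}$.} First I would observe that each $F^\Upsilon_{pqj}$ is a composition of elementary $\group{U}(n)$-equivariant operations. A direct computation with the defining action gives $\pi_j(g)(z_m \alpha) = \sum_k \overline{g_{km}} \, z_k \, \pi_j(g)\alpha$ and $\pi_j(g)(\Bar{z}_m \alpha) = \sum_k g_{km} \, \Bar{z}_k \, \pi_j(g)\alpha$, so that $e^*_m \mapsto z_m$ and $e_m \mapsto \Bar{z}_m$ define $\group{U}(n)$-equivariant maps of $(\C^n)^*$ and $\C^n$ into the multiplication operators on the spaces of forms. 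Since $\dbarb$ is equivariant, the same holds for $e_m \mapsto \zeta_m = \dbarb \Bar{z}_m$, whence $e_{c_1} \wedge \dots \wedge e_{c_j} \mapsto \zeta_{c_1} \wedge \dots \wedge \zeta_{c_j}$ is an equivariant map $\Lambda^j \C^n \to C^\infty(\Lambda^{0,j})$; the analogous map on $\Lambda^{j+1}\C^n$ entering $F^\Phi_{pqj}$ is well defined because $\sum_i (-1)^{i-1} \Bar{z}_{c_i} \zeta_{c_1} \wedge \dots \wedge \zeta_{c_{i-1}} \wedge \zeta_{c_{i+1}} \wedge \dots \wedge \zeta_{c_{j+1}}$ (a contraction of $\zeta_{c_1}\wedge\dots\wedge\zeta_{c_{j+1}}$ against a vector) is alternating in $c_1, \dots, c_{j+1}$. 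Composing these with the canonical equivariant comultiplication $\Lambda^{j+1}\C^n \to \Lambda^j\C^n \otimes \C^n$ and with multiplication of forms by functions shows that $F^\Phi_{pqj}$ and $F^\Psi_{pqj}$ intertwine the canonical $\group{U}(n)$-actions on $C^\Upsilon_{pqj}$ and on $\Forms{j}$.

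\emph{The norm of the highest weight form.} The key point is a pointwise identity for the forms $\zeta_c$: writing $\zeta_c = d\Bar{z}_c - \Bar{z}_c \sum_m z_m \, d\Bar{z}_m$ for the fibrewise orthogonal projection of $d\Bar{z}_c|_\Sphere$ onto $\Lambda^{0,1}$, and using $\lip d\Bar{z}_a, d\Bar{z}_b \rip = 2\delta_{ab}$, one obtains $\lip \zeta_c, \zeta_{c'} \rip_z = 2(\delta_{cc'} - \Bar{z}_c z_{c'})$ for $z \in \Sphere$. When $\Upsilon = \Psi$ we have $\hwv^\Psi_{pqj} = z_n^p \Bar{z}_1^{q-1} \, \zeta_1 \wedge \dots \wedge \zeta_j$, hence $\labs \hwv^\Psi_{pqj}(z) \rabs^2 = 2^j \labs z_n\rabs^{2p} \labs z_1\rabs^{2(q-1)} \det(\delta_{ab} - \Bar{z}_a z_b)_{1 \le a,b \le j}$, and the matrix determinant lemma gives $\det(\delta_{ab} - \Bar{z}_a z_b)_{1\le a,b\le j} = 1 - \sum_{a=1}^j \labs z_a\rabs^2 = \sum_{a=j+1}^n \labs z_a\rabs^2$. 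When $\Upsilon = \Phi$ we have $\hwv^\Phi_{pqj} = z_n^p \Bar{z}_1^{q-1} \sum_{i=1}^{j+1} (-1)^{i-1} \Bar{z}_i \, \omega_i$ with $\omega_i = \zeta_1 \wedge \dots \wedge \zeta_{i-1} \wedge \zeta_{i+1} \wedge \dots \wedge \zeta_{j+1}$; expanding $\labs \sum_i (-1)^{i-1} \Bar{z}_i \omega_i \rabs^2 = \sum_{i,i'} (-1)^{i+i'} \Bar{z}_i z_{i'} \lip \omega_i, \omega_{i'} \rip$ and recognising $\lip \omega_i, \omega_{i'} \rip$ as $2^j$ times a minor of $N = (\delta_{ab} - \Bar{z}_a z_b)_{1\le a,b\le j+1}$, hence as $2^j(-1)^{i+i'}$ times an entry of $\mathrm{adj}\, N$, the signs cancel and the sum becomes $2^j \sum_{i,i'} \Bar{z}_i z_{i'} (\mathrm{adj}\,N)_{i'i}$; since Sherman--Morrison gives $\mathrm{adj}\,N = (1-\tau) I + (\Bar{z}_a z_b)_{1\le a,b\le j+1}$ with $\tau = \sum_{a=1}^{j+1}\labs z_a\rabs^2$, this collapses to $2^j\tau = 2^j\sum_{a=1}^{j+1}\labs z_a\rabs^2$. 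In either case one is left with a sum of monomials in $\labs z_1\rabs^2, \dots, \labs z_n\rabs^2$, which are integrated over $\Sphere$ by the classical formula $\int_\Sphere \labs z_1\rabs^{2\alpha_1} \cdots \labs z_n\rabs^{2\alpha_n} \, d\sigma = 2\pi^n \, \alpha_1! \cdots \alpha_n! \,/\, (\alpha_1 + \dots + \alpha_n + n - 1)!$, and summing the resulting terms yields exactly \eqref{eq:phipqj-norm} and \eqref{eq:psipqj-norm}. The boundary cases $j = 0$ with $q = 0$, and $j = n-1$ with $p = -1$, are handled in the same way using the separate definitions of $F^\Phi_{p00}$ and $F^\Psi_{(-1)q(n-1)}$ (in the latter the Gram matrix is singular and its adjugate has rank one).

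\emph{Conclusion.} Since the norms just computed are strictly positive, $F^\Upsilon_{pqj}$ does not annihilate the primitive vector $v^\Upsilon_{pqj} \in V^\Upsilon_{pqj}$. Being $\group{U}(n)$-equivariant, $F^\Upsilon_{pqj}$ has a $\group{U}(n)$-invariant kernel, which therefore meets the irreducible space $V^\Upsilon_{pqj}$ trivially; hence $F^\Upsilon_{pqj}$ restricts to a $\group{U}(n)$-isomorphism of $V^\Upsilon_{pqj}$ onto an irreducible subspace of $\Forms{j}$ carrying the representation $\rho(q, \underline{1}_k, \underline{0}_{n-2-k}, -p)$ (with $k = j$ for $\Phi$ and $k = j-1$ for $\Psi$). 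By the decomposition of Folland recalled above, that representation occurs in $\Forms{j}$ with multiplicity one and its isotypic component is by definition $\Upsilon_{pqj}$; therefore $F^\Upsilon_{pqj}(V^\Upsilon_{pqj}) = \Upsilon_{pqj}$. The step I expect to be most delicate is the $\Phi$-case of the norm computation, in particular the bookkeeping of signs in the passage from the wedge expression through the minors to the adjugate of $N$; the $\Psi$-case, the equivariance, and the final soft argument are routine.
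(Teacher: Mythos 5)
Your proposal is correct. Note that the paper itself gives no proof of this proposition --- it is quoted verbatim from Folland \cite{Fo} --- so what you have written is a self-contained reconstruction of the cited result rather than an alternative to an argument in the paper. I checked the two computational pivots and they hold: with the normalisation $\labs d\Bar{z}_c \rabs^2 = 2$ used elsewhere in the paper, the projection $\zeta_c = d\Bar{z}_c - \Bar{z}_c \sum_m z_m \, d\Bar{z}_m$ indeed gives $\lip \zeta_c, \zeta_{c'} \rip = 2(\delta_{cc'} - \Bar{z}_c z_{c'})$; the Gram determinant then yields $\labs \hwv^\Psi_{pqj}(z)\rabs^2 = 2^j \labs z_n\rabs^{2p}\labs z_1\rabs^{2(q-1)}\sum_{a>j}\labs z_a\rabs^2$, and in the $\Phi$-case the sign bookkeeping through the minors and the adjugate of $N = I - \Bar{\mathbf{z}}\mathbf{z}^T$ does collapse the double sum to $2^j \sum_{a \leq j+1}\labs z_a\rabs^2$, after which the monomial integration formula reproduces \eqref{eq:phipqj-norm} and \eqref{eq:psipqj-norm} exactly (splitting off the exponents $a=1$ and $a=n$ gives the factors $q+j$ and $p+n-j$). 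Two small points deserve explicit mention if this were written out: first, at the boundary values $q=0$ (for $\Phi$, $j=0$) and $p=-1$ (for $\Psi$, $j=n-1$) the stated formulae are of the form $0 \cdot (-1)!$ and must be read via the conventions $q\,(q-1)! = q!$ and $(p+1)\,p! = (p+1)!$, consistent with the $0/0$ conventions the paper adopts elsewhere; second, your concluding Schur argument legitimately uses the multiplicity-free decomposition of $\Forms{j}$ stated earlier in the paper (Folland's Theorem 2), which is logically prior to the present proposition, so there is no circularity. The equivariance argument and the final identification of the image are routine, as you say.
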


By using this description, we may prove that the product of $z_m$ or $\Bar{z}_m$ with an element of $\Phi_{pqj}$ or $\Psi_{pqj}$ has at most three nonzero components with respect to the subspaces $\Phi_{p'q'j}$ and $\Psi_{p'q'j}$ and consequently there are at most three nonzero summands in each of the decompositions \eqref{eq:deltabar}, \eqref{eq:delta}.

\begin{lemma}\label{lem:decomp-phi-psi}
Let $0 \leq j \leq n-1$.
For all $m\in\lset1,\ldots,n\rset$ and $(p,q) \in I_j$,
\begin{align}
\Bar{z}_m \Phi_{pqj} &\subseteq \Phi_{p(q+1)j}\oplus \Phi_{(p-1)qj}\,,\label{eq:prodottozbarphi}\\
\Bar{z}_m \Psi_{pqj} &\subseteq \Psi_{p(q+1)j}\oplus \Phi_{pqj}\oplus \Psi_{(p-1)qj}\,,\label{eq:prodottozbarpsi}\\
{z}_m \Phi_{pqj} &\subseteq \Phi_{(p+1)qj}\oplus\Psi_{pqj} \oplus\Phi_{p(q-1)j}\,,\label{eq:prodottozphi}\\
{z}_m \Psi_{pqj} &\subseteq  \Psi_{(p+1)qj}\oplus  \Psi_{p(q-1)j}\,.\label{eq:prodottozpsi}
\end{align}
\end{lemma}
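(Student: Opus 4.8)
The idea is to read off multiplication by the coordinate functions representation-theoretically and then invoke Schur's lemma together with the multiplicity-freeness of $\Forms{j}$. Fix $m \in \{1,\dots,n\}$ and let $M_m$, $\Bar{M}_m$ be multiplication by $z_m$, $\Bar{z}_m$ on $\Forms{j}$, as in Remark~\ref{rem:symmetry}. Since multiplication by a function commutes with the fibrewise action of $\group{U}(n)$ on $\Lambda^{0,j}$, and $z_m \circ g^{-1} = \sum_{k=1}^n \overline{g_{km}}\, z_k$ for $g = (g_{k\ell}) \in \group{U}(n)$, one finds
\[
\pi_j(g)\, M_m\, \pi_j(g)^{-1} = \sum_{k=1}^{n} \overline{g_{km}}\, M_k , \qquad \pi_j(g)\, \Bar{M}_m\, \pi_j(g)^{-1} = \sum_{k=1}^{n} g_{km}\, \Bar{M}_k .
\]
Hence the maps $e_m^* \otimes \alpha \mapsto z_m \alpha$ and $e_m \otimes \alpha \mapsto \Bar{z}_m \alpha$ extend to $\group{U}(n)$-equivariant linear maps $\Theta \colon {\C^n}^* \otimes \Forms{j} \to \Forms{j}$ and $\Bar{\Theta}\colon \C^n \otimes \Forms{j} \to \Forms{j}$, where $\C^n$ carries the standard representation $\rho(1,\underline{0}_{n-1})$ of $\group{U}(n)$ and ${\C^n}^*$ its contragredient $\rho(\underline{0}_{n-1},-1)$.

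Now fix $(p,q,\Upsilon) \in I_j \times Y_j$. As $\Upsilon_{pqj}$ is $\group{U}(n)$-invariant, $\Theta({\C^n}^* \otimes \Upsilon_{pqj})$ is a $\group{U}(n)$-submodule of $\Forms{j}$; being a quotient of ${\C^n}^* \otimes \Upsilon_{pqj}$, all its irreducible constituents occur in ${\C^n}^* \otimes \Upsilon_{pqj}$. On the other hand, $\Forms{j}$ is multiplicity-free by \cite[Theorem~2]{Fo}, so every submodule of $\Forms{j}$ is the direct sum of a subfamily of the spaces $\Phi_{p'q'j}$, $\Psi_{p'q'j}$. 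Since $z_m\, \Upsilon_{pqj} = \Theta(e_m^* \otimes \Upsilon_{pqj}) \subseteq \Theta({\C^n}^* \otimes \Upsilon_{pqj})$, it follows that $z_m\, \Upsilon_{pqj}$ lies in the direct sum of those $\Phi_{p'q'j}$, $\Psi_{p'q'j}$ whose underlying representation occurs in ${\C^n}^* \otimes \Upsilon_{pqj}$; likewise, $\Bar{z}_m\, \Upsilon_{pqj}$ lies in the direct sum of those occurring in $\C^n \otimes \Upsilon_{pqj}$. It remains to decompose these two tensor products and pick out the summands appearing in $\Forms{j}$.

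This is governed by the single-box Pieri rule for $\group{U}(n)$ (see, e.g., \cite{GW2}): $\rho(\ell_1,\dots,\ell_n) \otimes \rho(1,\underline{0}_{n-1})$ is the direct sum of the $\rho(\ell_1,\dots,\ell_i+1,\dots,\ell_n)$ over those $i$ for which the resulting $n$-tuple is still non-increasing, and $\rho(\ell_1,\dots,\ell_n) \otimes \rho(\underline{0}_{n-1},-1)$ is the direct sum of the $\rho(\ell_1,\dots,\ell_i-1,\dots,\ell_n)$ over those $i$ for which the result is non-increasing. Here $\ell$ is the parameter of $\Upsilon_{pqj}$, namely $(q,\underline{1}_j,\underline{0}_{n-2-j},-p)$ if $\Upsilon = \Phi$ and $(q,\underline{1}_{j-1},\underline{0}_{n-1-j},-p)$ if $\Upsilon = \Psi$, while by \cite[Theorem~2]{Fo} the representations occurring in $\Forms{j}$ are exactly those with parameter of the shape $(q',\underline{1}_j,\underline{0}_{n-2-j},-p')$, equal to $\Phi_{p'q'j}$, or $(q',\underline{1}_{j-1},\underline{0}_{n-1-j},-p')$, equal to $\Psi_{p'q'j}$. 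For \eqref{eq:prodottozphi}, for instance, the admissible decrements of $\ell = (q,\underline{1}_j,\underline{0}_{n-2-j},-p)$ are: decreasing the first entry, which gives $\Phi_{p(q-1)j}$ (present when $(p,q-1) \in I_j$); decreasing the last of the $j$ ones, which gives $\Psi_{pqj}$; and decreasing the last entry, which gives $\Phi_{(p+1)qj}$; any other decrement either breaks the non-increasing condition or produces a parameter not of either shape, hence a representation absent from $\Forms{j}$. This proves \eqref{eq:prodottozphi}, and \eqref{eq:prodottozpsi}, \eqref{eq:prodottozbarphi}, \eqref{eq:prodottozbarpsi} follow in the same way, using increments for the $\Bar{z}_m$ cases. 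The convention $\Upsilon'_{p'q'j} = \{0\}$ for $(p',q',\Upsilon') \notin I_j \times Y_j$ handles the degenerate ranges automatically; for example, the $\Psi_{pqj}$ term above is absent when $j = 0$, and the whole assertion is vacuous when $\Upsilon_{pqj} = \{0\}$.

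The only laborious point is this last verification: one must go through the four products and, for each, through the ranges $j = 0$, $1 \le j \le n-2$, $j = n-1$ and the small values of $p$ and $q$, checking both that no constituent of $\Forms{j}$ other than the three listed ones can appear and that each of those three does appear with the stated index shift. An alternative route, closer to the computations in \cite{CKS}, is to use Folland's explicit models $C^\Upsilon_{pqj}$ and intertwiners $F^\Upsilon_{pqj}$ directly, under which multiplication by $z_m$ or $\Bar{z}_m$ becomes an explicit map between the $C^\Upsilon$-spaces obtained essentially by adjoining an extra ${\C^n}^*$- or $\C^n$-factor; the representation-theoretic argument above is, however, shorter.
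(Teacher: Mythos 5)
Your proof is correct and follows essentially the same strategy as the paper's: decompose the tensor product of $\Upsilon_{pqj}$ with the standard representation on $\C^n$ (for $\Bar{z}_m$) or its dual ${\C^n}^*$ (for $z_m$) via the Pieri rule, then use Schur's lemma and the multiplicity-free decomposition of $\Forms{j}$ to discard the constituents that do not occur there. The only (cosmetic) difference is that you realise coordinate multiplication directly as an equivariant map ${\C^n}^* \otimes \Forms{j} \to \Forms{j}$ (resp.\ $\C^n \otimes \Forms{j} \to \Forms{j}$), whereas the paper routes the same observation through Folland's explicit models $C^\Upsilon_{pqj}$ and the intertwiners $F^\Upsilon_{pqj}$, using identities such as $F^{\Phi}_{p(q+1)j}(v \otimes e_m) = \Bar{z}_m F^{\Phi}_{pqj}(v)$.
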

\begin{proof}
To prove \eqref{eq:prodottozbarphi}, we observe that $V_{pqj} \otimes \C^n \subseteq C_{pqj} \otimes \C^n = C_{p(q+1)j}$ and
\begin{equation}\label{eq:tensorproduct}
F^{\Phi}_{p(q+1)j}(v \otimes e_m) = \Bar{z}_m F^{\Phi}_{pqj}(v)
\end{equation}
for all $v \in C_{pqj}$.
In particular
\[
\Bar{z}_m \Phi_{pqj} = \Bar{z}_m F^{\Phi}_{pqj}(V_{pqj}) \subseteq F^{\Phi}_{p(q+1)j}(V_{pqj} \otimes \C^n).
\]
Note that $\rho(1,\underline{0}_{n-1})$ is the canonical representation of $\group{U}(n)$ on $\C^n$.
It is well known (see \cite[\S 18.2.10, formula (1)]{VK} or \cite[\S 7.1.4, ex.\ 5]{GW2}) that the tensor-product representation $\rho(q,\underline{1}_j,\underline{0}_{n-2-j},-p)\otimes \rho(1,\underline{0}_{n-1})$ decomposes as a direct sum of inequivalent irreducible representations;  more precisely
\[
\rho(q,\underline{1}_j,\underline{0}_{n-2-j},-p) \otimes \rho(1,\underline{0}_{n-1})
\cong \bigoplus_{s \in \bar S} \rho((q,\underline{1}_j,\underline{0}_{n-2-j},-p) + (\underline{0}_{s-1},1,\underline{0}_{n-s})),
\]
where $\bar S$ is the set of all $s \in \lset1, \dots, n\rset$ such that $(q,\underline{1}_j,\underline{0}_{n-2-j},-p) + (\underline{0}_{s-1},1,\underline{0}_{n-s})$ is a nonincreasing $n$-tuple.
It is easily seen that at most four direct summands are present; accordingly the representation space decomposes as
\begin{equation}\label{eq:sumVpqj}
V_{pqj} \otimes \C^n = \bigoplus_{l=1}^4 V_{pqj}^{(l)},
\end{equation}
where
\begin{align*}
V_{pqj}^{(1)} \text{ corresponds to } &\rho(q+1,\underline{1}_j,0_{n-2-j},-p),\\
V_{pqj}^{(2)} \text{ corresponds to } &\rho(q,2,\underline{1}_{j-1},0_{n-2-j},-p),\\
V_{pqj}^{(3)} \text{ corresponds to } &\rho(q,\underline{1}_{j+1},0_{n-3-j},-p),\\
V_{pqj}^{(4)} \text{ corresponds to } &\rho(q,\underline{1}_j,0_{n-2-j},-p+1),
\end{align*}
with the convention that $V_{pqj}^{(l)} = \lset0\rset$ if the corresponding $n$-tuple is not nonincreasing.
Note that $V_{pqj}^{(1)} = V_{p(q+1)j} $.

Since $F^{\Phi}_{p(q+1)j} : C_{p(q+1)j} \to \Forms{j}$ is an intertwining operator, a comparison of the decompositions of $V_{pqj} \otimes \C^n$ and $\Forms{j}$ into irreducible representations and an application of Schur's lemma show that
\begin{align*}
F^{\Phi}_{p(q+1)j}(V_{pqj}^{(1)}) &\subseteq \Phi_{p(q+1)j}, & F^{\Phi}_{p(q+1)j}(V_{pqj}^{(2)}) &= \lset0\rset, \\
F^{\Phi}_{p(q+1)j}(V_{pqj}^{(3)}) &= \lset0\rset, & F^{\Phi}_{p(q+1)j}(V_{pqj}^{(4)}) &\subseteq \Phi_{(p-1)qj}
\end{align*}
and \eqref{eq:prodottozbarphi} follows.

To prove \eqref{eq:prodottozbarpsi}, we first check that
$\Bar{z}_m \Psi_{pqj} \subseteq F^{\Psi}_{p(q+1)j}(V_{pq(j-1)} \otimes  \C^n)$, and then apply the decomposition of $V_{pq(j-1)} \otimes \C^n$ given by \eqref{eq:sumVpqj}.
Since $F^{\Psi}_{p(q+1)j}$ is an intertwining operator from $: C_{p(q+1)(j-1)}$ to $\Forms{j}$,  Schur's lemma implies that
\begin{align*}
F^{\Psi}_{p(q+1)j}(V_{pq(j-1)}^{(1)}) &\subseteq \Psi_{p(q+1)j}, & 
F^{\Psi}_{p(q+1)j}(V_{pq(j-1)}^{(2)}) &= \lset0\rset, \\
F^{\Psi}_{p(q+1)j}(V_{pq(j-1)}^{(3)}) &\subseteq \Phi_{pqj}, & 
F^{\Psi}_{p(q+1)j}(V_{pq(j-1)}^{(4)}) &\subseteq \Psi_{(p-1)qj}
\end{align*}
and \eqref{eq:prodottozbarpsi} follows.

To prove \eqref{eq:prodottozphi},  we observe that
\[
z_m \Phi_{pqj} = z_m F^{\Phi}_{pqj}(V_{pqj}) \subseteq F^{\Phi}_{(p+1)qj}({\C^n}^* \otimes V_{pqj}),
\]
since ${\C^n}^* \otimes V_{pqj} \subseteq {\C^n}^* \otimes C_{pqj} = C_{(p+1)qj}$ and $z_m F^{\Phi}_{pqj}(v) = F^{\Phi}_{(p+1)qj}(e^*_m \otimes v)$ for all $v \in C_{pqj}$.
Now $\rho(\underline{0}_{n-1},-1)$ is the canonical representation of $\group{U}(n)$ on ${\C^n}^*$ and
\[
\rho(\underline{0}_{n-1},-1) \otimes \rho(q,\underline{1}_j,\underline{0}_{n-2-j},-p)
\cong \bigoplus_{s\in S}  \rho((q,\underline{1}_j,\underline{0}_{n-2-j},-p)-(\underline{0}_{s-1},1,\underline{0}_{n-s})),
\]
where $S$ is the set of all $s \in \lset1,\dots,n\rset$ such that $(q,\underline{1}_j,\underline{0}_{n-2-j},-p) - (\underline{0}_{s-1},1,\underline{0}_{n-s})$ is a nonincreasing $n$-tuple (see \cite[\S 18.2.10, formula (2)]{VK}).
Accordingly
\begin{equation}\label{eq:sumVmpqj}
{\C^n}^* \otimes V_{pqj} = \bigoplus_{l=1}^4 V_{pqj}^{(-l)},
\end{equation}
where
\begin{align*}
V_{pqj}^{(-1)} \text{ corresponds to } &\rho(q,\underline{1}_j,0_{n-2-j},-p-1),\\
V_{pqj}^{(-2)} \text{ corresponds to } &\rho(q,\underline{1}_{j},0_{n-3-j},-1,-p),\\
V_{pqj}^{(-3)} \text{ corresponds to } &\rho(q,\underline{1}_{j-1},0_{n-1-j},-p),\\
V_{pqj}^{(-4)} \text{ corresponds to } &\rho(q-1,\underline{1}_j,0_{n-2-j},-p),
\end{align*}
with the convention that $V_{pqj}^{(-l)} = \lset0\rset$ if the corresponding $n$-tuple is not nonincreasing.
Note that $V_{pqj}^{(-1)} = V_{(p+1)qj}$.
Again by Schur's lemma we conclude that
\begin{align*}
F^{\Phi}_{(p+1)qj}(V_{pqj}^{(-1)}) &\subseteq \Phi_{(p+1)qj}, & F^{\Phi}_{(p+1)qj}(V_{pqj}^{(-2)}) &= \lset0\rset, \\
F^{\Phi}_{(p+1)qj}(V_{pqj}^{(-3)}) &\subseteq \Psi_{pqj}, & F^{\Phi}_{(p+1)qj}(V_{pqj}^{(-4)}) &\subseteq \Phi_{p(q-1)j}
\end{align*}
and \eqref{eq:prodottozphi} follows.

We prove \eqref{eq:prodottozpsi} analogously, by noting that $z_m \Psi_{pqj} \subseteq F^{\Psi}_{(p+1)qj}({\C^n}^* \otimes V_{pq(j-1)})$ and that
\begin{align*}
F^{\Psi}_{(p+1)qj}(V_{pq(j-1)}^{(-1)}) &\subseteq \Psi_{(p+1)qj}, & F^{\Psi}_{(p+1)qj}(V_{pq(j-1)}^{(-2)}) &= \lset0\rset, \\
F^{\Psi}_{(p+1)qj}(V_{pq(j-1)}^{(-3)}) &= \lset0\rset, & F^{\Psi}_{(p+1)qj}(V_{pq(j-1)}^{(-4)}) &\subseteq \Psi_{p(q-1)j}
\end{align*}
by Schur's lemma.
\end{proof}

Proposition \ref{prp:TheoremProjectionv} and Lemma \ref{lem:decomp-phi-psi} show that all the coefficients not explicitly mentioned in Theorem \ref{thm:Theorem-delta} vanish.
In order to compute the remaining coefficients, the following consequence of the symmetry of Clebsch--Gordan coefficients will be useful (compare with \cite[\S 18.2.1]{VK}).

\begin{lemma}\label{lem:KlymikVilenkin}
Let $\mu$ and $\nu$ be irreducible unitary representations of a compact group $G$ on Hilbert spaces $V^\mu$ and $V^\nu$.
Let $H$ be a minimal nontrivial invariant subspace of $V^\mu \otimes V^\nu$ with respect to the representation $\mu \otimes \nu$ of $G$ and let $\xi$ be the subrepresentation of $\mu \otimes \nu$ on $H$.
Suppose that $\xi$ appears with multiplicity $1$ in $\mu \otimes \nu$.
Let $P_H \in \Lin(V^\mu \otimes V^\nu)$ be the orthogonal projection onto $H$ and $\lset e^\nu_\ell\rset_\ell$ be an orthonormal basis of $V^\nu$.
Then
\[
\sum_{\ell=1}^{\dim V^\nu} \lnorm P_H ( v \otimes e^\nu_\ell ) \rnorm^2 = \frac{\dim H}{\dim V^\mu} \lnorm v\rnorm^2
\]
for all $v \in V^\mu$.
\end{lemma}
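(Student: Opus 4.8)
The plan is to interpret the left-hand side as the quadratic form of the \emph{partial trace} of $P_H$ over $V^\nu$ and then to apply Schur's lemma. Let $\mathrm{Tr}_{V^\nu}\colon \Lin(V^\mu \otimes V^\nu) \to \Lin(V^\mu)$ denote the partial trace, characterised by
\[
\lip \mathrm{Tr}_{V^\nu}(X) \, v, w \rip = \sum_{\ell=1}^{\dim V^\nu} \lip X(v \otimes e^\nu_\ell), w \otimes e^\nu_\ell \rip
\]
for all $v,w \in V^\mu$, the right-hand side being independent of the choice of orthonormal basis $\lset e^\nu_\ell \rset_\ell$ of $V^\nu$; equivalently, $\mathrm{Tr}_{V^\nu}$ is the unique linear map with $\mathrm{Tr}_{V^\nu}(A \otimes B) = (\tr B)\, A$. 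Set $S = \mathrm{Tr}_{V^\nu}(P_H)$. Since $P_H$ is an orthogonal projection, $P_H = P_H^* = P_H^2$, and therefore
\[
\sum_{\ell=1}^{\dim V^\nu} \lnorm P_H(v \otimes e^\nu_\ell) \rnorm^2 = \sum_{\ell=1}^{\dim V^\nu} \lip P_H(v \otimes e^\nu_\ell), v \otimes e^\nu_\ell \rip = \lip S v, v \rip
\]
for all $v \in V^\mu$; in particular $S$ is a positive semidefinite self-adjoint operator on $V^\mu$.

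The next step is to check that $S$ commutes with $\mu(g)$ for every $g \in G$. This rests on the identity
\[
\mathrm{Tr}_{V^\nu}\bigl( (\mu(g) \otimes \nu(g)) \, X \, (\mu(g)^* \otimes \nu(g)^*) \bigr) = \mu(g) \, \mathrm{Tr}_{V^\nu}(X) \, \mu(g)^* ,
\]
valid for all $X \in \Lin(V^\mu \otimes V^\nu)$, which in turn follows from the elementary facts $\mathrm{Tr}_{V^\nu}((A \otimes \id)X) = A\,\mathrm{Tr}_{V^\nu}(X)$, $\mathrm{Tr}_{V^\nu}(X(A \otimes \id)) = \mathrm{Tr}_{V^\nu}(X)\,A$, and the cyclicity of the trace in the $V^\nu$ variable, namely $\mathrm{Tr}_{V^\nu}((\id \otimes B)X) = \mathrm{Tr}_{V^\nu}(X(\id \otimes B))$. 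Since $H$ is invariant under $\mu \otimes \nu$, the projection $P_H$ commutes with $(\mu \otimes \nu)(g) = \mu(g) \otimes \nu(g)$ for every $g$, so applying the displayed identity with $X = P_H$ yields $\mu(g) \, S \, \mu(g)^* = S$.

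Finally, since $\mu$ is irreducible, Schur's lemma forces $S = c \, \id_{V^\mu}$ for some scalar $c$, which is nonnegative by the positivity observed above. To identify $c$ I would take traces: on one hand $\tr S = c \dim V^\mu$, while on the other hand the fact that the total trace equals the trace of the partial trace gives $\tr S = \tr_{V^\mu \otimes V^\nu}(P_H) = \dim H$, the trace of an orthogonal projection being the dimension of its range. Hence $c = \dim H / \dim V^\mu$, and combining this with the first display proves the lemma. I do not expect a serious obstacle here: the only delicate points are the bookkeeping for the partial-trace identities (all standard) and the basis-independence of the definition of $\mathrm{Tr}_{V^\nu}$, which is checked directly. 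I remark that the multiplicity-one hypothesis on $\xi$ is not actually needed for this computation; it serves only to single out $H$ as the $\xi$-isotypic component of $\mu \otimes \nu$, which is the form in which the lemma will be applied.
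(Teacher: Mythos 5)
Your proof is correct, and it takes a genuinely different route from the one in the paper. You average nothing over the group and invoke no orthogonality relations: you observe that the quantity $\sum_\ell \lnorm P_H(v\otimes e^\nu_\ell)\rnorm^2$ is the quadratic form of the partial trace $S=\mathrm{Tr}_{V^\nu}(P_H)\in\Lin(V^\mu)$, that $S$ intertwines $\mu$ because $P_H$ commutes with $\mu\otimes\nu$ and the partial trace is covariant under conjugation by $\mu(g)\otimes\nu(g)$, and then you pin down the scalar via $\tr S=\tr P_H=\dim H$. All the partial-trace identities you list are standard and unproblematic in finite dimensions (which is automatic here, as irreducible unitary representations of compact groups are finite-dimensional). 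The paper instead proves an enhanced form of Schur's orthogonality relations (Lemma \ref{lem:orthorelations}), expands $\lnorm P_H(v\otimes e^\nu_\ell)\rnorm^2$ in an orthonormal basis of $H$, rewrites the resulting inner products as $L^2(G)$-inner products of matrix coefficients, and exploits the fact that $\mu$ occurs with multiplicity one in $\xi\otimes\tilde\nu$ --- in effect a proof of the symmetry of Clebsch--Gordan coefficients, which is the form of the statement cited from \cite{VK}. Your argument is shorter and, as you correctly note, strictly more general: it uses neither the irreducibility of $\nu$, nor the minimality of $H$, nor the multiplicity-one hypothesis --- only the invariance of $H$ and the irreducibility of $\mu$. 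What the paper's route buys is the orthogonality relations themselves (which have independent interest) and the explicit link to the Clebsch--Gordan symmetry invoked elsewhere in the text; what yours buys is brevity and weaker hypotheses.
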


We give a proof of this lemma in Section 6.
Now we can determine some of the coefficients in the decomposition.

\begin{proposition}\label{prp:fracdimVpqj}
Let $0 \leq j \leq n-1$.
For all $\Upsilon \in Y_j$ and all $(p,q) \in I_j$,
\begin{align}
\Bar{\delta}^{\Upsilon\Upsilon}_{ppq(q+1)j} &=
\frac{\lnorm\hwv^{\Upsilon}_{p(q+1)j}\rnorm^2}{\lnorm\hwv^{\Upsilon}_{pqj}\rnorm^2},
\label{eq:diagdeltabarphi}\\
\delta^{\Upsilon\Upsilon}_{p(p+1)qqj} &=
\frac{\lnorm\hwv^{\Upsilon}_{(p+1)qj}\rnorm^2}{\lnorm\hwv^{\Upsilon}_{pqj}\rnorm^2} .
\label{eq:diagdeltaphi}
\end{align}
\end{proposition}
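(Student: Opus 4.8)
The plan is to combine the representation-theoretic formulas of Proposition~\ref{prp:TheoremProjectionv} for $\Bar\delta$ and $\delta$ with the symmetry identity of Lemma~\ref{lem:KlymikVilenkin}, via the concrete realization of $\Upsilon_{pqj}$ through the intertwiners $F^\Upsilon_{pqj}$. As a preliminary I would record a normalization fact: since $F^\Upsilon_{pqj}$ intertwines the $\group{U}(n)$-actions on $C^\Upsilon_{pqj}$ and $\Upsilon_{pqj}$, its restriction to $V^\Upsilon_{pqj}$ is a nonzero intertwiner between irreducible representations of the same type, so by Schur's lemma it equals $c^\Upsilon_{pqj}$ times a linear isometry for some $c^\Upsilon_{pqj}>0$; evaluating on the primitive vector $v^\Upsilon_{pqj}$, which is one of the orthonormal basis vectors of $C^\Upsilon_{pqj}$ and hence has norm $1$, gives $(c^\Upsilon_{pqj})^2 = \lnorm\hwv^\Upsilon_{pqj}\rnorm^2$ (recall \eqref{eq:phipqj-norm}--\eqref{eq:psipqj-norm}). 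In particular $F^\Upsilon_{pqj}$ is injective on $V^\Upsilon_{pqj}$, so $\dim\Upsilon_{pqj}=\dim V^\Upsilon_{pqj}$, and any nonzero $\alpha\in\Upsilon_{pqj}$ may be written $\alpha = F^\Upsilon_{pqj}(v)$ with $\lnorm\alpha\rnorm^2 = (c^\Upsilon_{pqj})^2\lnorm v\rnorm^2$.

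Next I would revisit the proof of Lemma~\ref{lem:decomp-phi-psi}, where multiplication by $\Bar{z}_m$ corresponds, under the intertwiners, to the map $v\mapsto v\otimes e_m$ from $V^\Upsilon_{pqj}$ into $V^\Upsilon_{pqj}\otimes\C^n\subseteq C^\Upsilon_{p(q+1)j}$ (formula~\eqref{eq:tensorproduct} and its $\Psi$-analogue), and where $V^\Upsilon_{p(q+1)j}$ is precisely the ``top'' irreducible summand $H$ of $V^\Upsilon_{pqj}\otimes\C^n$, while $F^\Upsilon_{p(q+1)j}$ maps every other summand either to $\lset0\rset$ or into an irreducible subspace of $\Forms{j}$ inequivalent to $\Upsilon_{p(q+1)j}$ (cf.\ \eqref{eq:prodottozbarphi}, \eqref{eq:prodottozbarpsi}). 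Since the decomposition of $\Forms{j}$ is multiplicity-free, those other images are orthogonal to $\Upsilon_{p(q+1)j}$, so for $\alpha=F^\Upsilon_{pqj}(v)$ one obtains
\[
P^\Upsilon_{p(q+1)j}(\Bar{z}_m\alpha)=F^\Upsilon_{p(q+1)j}\bigl(P_H(v\otimes e_m)\bigr),
\qquad
\lnorm P^\Upsilon_{p(q+1)j}(\Bar{z}_m\alpha)\rnorm^2=(c^\Upsilon_{p(q+1)j})^2\,\lnorm P_H(v\otimes e_m)\rnorm^2.
\]

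Summing the last identity over $m$ and applying Lemma~\ref{lem:KlymikVilenkin}, with $\mu$ the representation on $V^\Upsilon_{pqj}$, $\nu=\rho(1,\underline{0}_{n-1})$ on $\C^n$ carrying the standard basis $\lset e_m\rset$, and $\xi$ the subrepresentation on $H=V^\Upsilon_{p(q+1)j}$ (which has multiplicity one, the tensor product being multiplicity-free), gives $\sum_{m}\lnorm P_H(v\otimes e_m)\rnorm^2=(\dim\Upsilon_{p(q+1)j}/\dim\Upsilon_{pqj})\lnorm v\rnorm^2$. Inserting this into the formula for $\Bar{\delta}^{\Upsilon\Upsilon}_{ppq(q+1)j}$ from Proposition~\ref{prp:TheoremProjectionv}, the dimension ratios cancel and the factor $\lnorm v\rnorm^2/\lnorm\alpha\rnorm^2=(c^\Upsilon_{pqj})^{-2}$ combines with $(c^\Upsilon_{p(q+1)j})^2$ to leave exactly $\lnorm\hwv^\Upsilon_{p(q+1)j}\rnorm^2/\lnorm\hwv^\Upsilon_{pqj}\rnorm^2$, which is~\eqref{eq:diagdeltabarphi}. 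Identity~\eqref{eq:diagdeltaphi} follows in the same way with $\Bar{z}_m$ replaced by $z_m$: this corresponds to $v\mapsto e^*_m\otimes v$ from $V^\Upsilon_{pqj}$ into ${\C^n}^*\otimes V^\Upsilon_{pqj}\subseteq C^\Upsilon_{(p+1)qj}$, whose top summand is $V^\Upsilon_{(p+1)qj}$ (cf.\ \eqref{eq:prodottozphi} and its $\Psi$-analogue), and after composing with the equivariant unitary flip ${\C^n}^*\otimes V^\Upsilon_{pqj}\to V^\Upsilon_{pqj}\otimes{\C^n}^*$ one applies Lemma~\ref{lem:KlymikVilenkin} with $\nu=\rho(\underline{0}_{n-1},-1)$ and basis $\lset e^*_m\rset$.

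The only genuinely non-formal point is the identification $P^\Upsilon_{p(q+1)j}(\Bar{z}_m\alpha)=F^\Upsilon_{p(q+1)j}(P_H(v\otimes e_m))$ (and its $z_m$ counterpart): it uses the precise description, from the proof of Lemma~\ref{lem:decomp-phi-psi}, of where $F^\Upsilon_{p(q+1)j}$ sends each irreducible summand of $V^\Upsilon_{pqj}\otimes\C^n$, together with the multiplicity-free decomposition of $\Forms{j}$, which ensures that the non-top summands land in subspaces orthogonal to $\Upsilon_{p(q+1)j}$. Everything else is bookkeeping with the Schur constants $c^\Upsilon_{pqj}$ and the dimensions.
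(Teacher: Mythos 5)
Your proposal is correct and follows essentially the same route as the paper: transport multiplication by $\Bar{z}_m$ (resp.\ $z_m$) to the map $v\mapsto v\otimes e_m$ (resp.\ $e_m^*\otimes v$) via the intertwiners $F^\Upsilon_{pqj}$, use Schur's lemma and the multiplicity-free decomposition to commute the projections with $F^\Upsilon_{p(q+1)j}$, apply Lemma~\ref{lem:KlymikVilenkin} to the sum over $m$, and cancel the dimension ratios against those in Proposition~\ref{prp:TheoremProjectionv}. Your explicit bookkeeping with the Schur constants $c^\Upsilon_{pqj}$ and the unit norm of the primitive vectors is just a slightly more detailed version of the paper's observation that $\lnorm v_{pqj}\rnorm=\lnorm v_{p(q+1)j}\rnorm$, and your remark about the flip ${\C^n}^*\otimes V\to V\otimes{\C^n}^*$ fills in a detail the paper leaves to ``analogously''.
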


\begin{proof}
We consider the case where $\Upsilon = \Phi$.
Let $P^V_{{p(q+1)j}} \in \Lin(V_{pqj}\otimes \C^n)$ be the orthogonal projection onto $V_{p(q+1)j}$.
Then, by \eqref{eq:tensorproduct},
\[\begin{aligned}
\sum_{m=1}^{n} \lnorm P_{p(q+1)j}^{\Phi} (\Bar{z}_m \hwv^{\Phi}_{pqj})\rnorm^2 
&=
\sum_{m=1}^{n} \lnorm P_{p(q+1)j}^{\Phi}
(
F^{\Phi}_{p(q+1)j}
(v_{pqj} \otimes  e_m))\rnorm^2\\
&=\sum_{m=1}^{n}
\lnorm
F^{\Phi}_{p(q+1)j}(P^V_{{p(q+1)j}}
(v_{pqj} \otimes  e_m))\rnorm^2\\
&=
\frac{\lnorm
F^{\Phi}_{p(q+1)j}(
v_{p(q+1)j})
\rnorm^2}{
\lnorm v_{p(q+1)j}\rnorm^2}
\sum_{m=1}^{n}
\lnorm
P^V_{{p(q+1)j}}
(
v_{pqj} \otimes  e_m)\rnorm^2\,,
\end{aligned}\]
where we have repeatedly used the fact that $F^{\Phi}_{p(q+1)j}$ is an intertwining operator, and more precisely that $F^{\Phi}_{p(q+1)j}\bigrest_{V_{p(q+1)j}} : V_{p(q+1)j} \to \Phi_{p(q+1)j}$ is a multiple of a unitary operator.

Note that $F^{\Phi}_{p(q+1)j}(v_{p(q+1)j})=\hwv^{\Phi}_{p(q+1)j}$.
Moreover $\rho(q+1,\underline{1}_j,\underline{0}_{n-j-2},-p)$ is contained once in $\rho(q,\underline{1}_j,\underline{0}_{n-j-2},-p) \otimes \rho(1,\underline{0}_{n-1})$ and $\rho(q,\underline{1}_j,\underline{0}_{n-j-2},-p)$ is contained once in $\rho(q+1,\underline{1}_j,\underline{0}_{n-j-2},-p) \otimes \rho(\underline{0}_{n-1},-1)$ (see the proof of Lemma~\ref{lem:decomp-phi-psi}).
Therefore from Lemma~\ref{lem:KlymikVilenkin}, we deduce that
\[
\sum_{m=1}^{n} \lnorm P_{p(q+1)j}^{\Phi} (\Bar{z}_m \hwv^{\Phi}_{pqj})\rnorm^2 = \lnorm\hwv^{\Phi}_{p(q+1)j}\rnorm^2 \frac{\lnorm v_{pqj}\rnorm^2}{\lnorm v_{p(q+1)j}\rnorm^2} \cdot\frac{\dim V_{p(q+1)j}}{\dim V_{pqj}}.
\]
Since $\lnorm v_{pqj}\rnorm = \lnorm v_{p(q+1)j}\rnorm$, formula \eqref{eq:diagdeltabarphi} follows from Proposition~\ref{prp:TheoremProjectionv}.
The other formulae are proved analogously.
\end{proof}

In the following lemma, we collect some results, which may be easily deduced from \eqref{eq:phipqj-norm}, \eqref{eq:psipqj-norm}, \eqref{eq:deflambdapqj} and Lemma~\ref{lem:dimensions}.

\begin{lemma}\label{lem:collection}
Let $0 \leq j \leq n-1$.
For all $(p,q) \in I_j$, 
\begin{align*}
\frac{\lnorm\hwv^{\Phi}_{(p+1)qj}\rnorm^2}{\lnorm\hwv^{\Phi}_{pqj}\rnorm^2} &= \frac{p+1}{p+q+n},
\\
\frac{\lnorm\hwv^{\Phi}_{p(q+1)j}\rnorm^2}{\lnorm\hwv^{\Phi}_{pqj}\rnorm^2} &= \frac{q}{q+j} \cdot \frac{q+1+j}{p+q+n},
\end{align*}
when $j \leq n-2$, and, if $j \geq 1$, then
\begin{align*}
\frac{\lnorm\hwv^{\Psi}_{(p+1)qj}\rnorm^2}{\lnorm\hwv^{\Psi}_{pqj}\rnorm^2} &= \frac{p+1}{p+n-j} \cdot \frac{p+1+n-j}{p+q+n},
\\
\frac{\lnorm\hwv^{\Psi}_{p(q+1)j}\rnorm^2}{\lnorm\hwv^{\Psi}_{pqj}\rnorm^2} &= \frac{q}{p+q+n}.
\end{align*}
If $0 \leq j \leq n-2$ and $(p,q) \in I_j \cap I_{j+1}$, then
\begin{align*}
\frac{\lambda_{(p+1)qj}^2}{\lambda_{pqj}^2} &= \frac{p+n-j}{p+n-j-1},\\
\frac{\lambda_{p(q+1)j}^2}{\lambda_{pqj}^2} &= \frac{q+1+j}{q+j}.
\end{align*}
Moreover, if $0 \leq j \leq n-2$, then
\begin{align*}
\frac{\dim \Phi_{(p+1)qj}}{\dim \Phi_{pqj}} &= \frac{p+q+n}{p+q+n-1}\cdot\frac{p+n}{p+n-j}\cdot\frac{p+n-j-1}{p+1} = \frac{\dim \Psi_{(p+1)q(j+1)}}{\dim \Psi_{pq(j+1)}},\\
\frac{\dim \Phi_{p(q+1)j}}{\dim \Phi_{pqj}} &= \frac{p+q+n}{p+q+n-1}\cdot\frac{q+j}{q}\cdot\frac{q+n-1}{q+j+1} = \frac{\dim \Psi_{p(q+1)(j+1)}}{\dim \Psi_{pq(j+1)}};
\end{align*}
the equalities on the left hold if $(p,q) \in I_j$ and those on the right if $(p,q) \in I_{j+1}$. 
Finally, if $1 \leq j \leq n-2$ and $(p,q) \in I_j$, then
\begin{align*}
\frac{\dim \Psi_{pqj}}{\dim \Phi_{pqj}} &=
\frac{j}{n-1-j}
\cdot\frac{p+n-j-1}{p+n-j}
\cdot\frac{q+j}{q+j-1}.
\end{align*}
\end{lemma}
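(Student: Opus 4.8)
The plan is to verify each of the listed identities by direct substitution into the closed-form expressions \eqref{eq:phipqj-norm}, \eqref{eq:psipqj-norm} and \eqref{eq:deflambdapqj} and into the dimension formula of Lemma~\ref{lem:dimensions}, followed by elementary cancellation of factorials and binomial coefficients; no new idea is required, and the work is purely bookkeeping.

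First I would deal with the ratios of norms of highest weight forms and the eigenvalue ratios. By \eqref{eq:phipqj-norm}, the quantity $\lnorm\hwv^{\Phi}_{pqj}\rnorm^2$ depends on $p$ only through $p!/(p+q+n-1)!$ and on $q$ only through $(q-1)!\,(q+j)/(p+q+n-1)!$; hence replacing $p$ by $p+1$ multiplies it by $(p+1)/(p+q+n)$, while replacing $q$ by $q+1$ multiplies it by $q(q+1+j)/\bigl((p+q+n)(q+j)\bigr)$. For $\Psi$ one repeats this using \eqref{eq:psipqj-norm}: the only change is that the factor $q+j$ becomes $p+n-j$, which is untouched by $q\mapsto q+1$ and becomes $p+1+n-j$ under $p\mapsto p+1$. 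This yields the four stated norm ratios. For the eigenvalues, \eqref{eq:deflambdapqj} gives $\lambda_{pqj}^2 = 2(q+j)(p+n-1-j)$, a product of two linear factors, so $\lambda_{(p+1)qj}^2/\lambda_{pqj}^2 = (p+n-j)/(p+n-1-j)$ and $\lambda_{p(q+1)j}^2/\lambda_{pqj}^2 = (q+1+j)/(q+j)$ follow immediately.

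Next I would handle the dimension ratios. The key observation is that the expression in Lemma~\ref{lem:dimensions} for $\dim\Phi_{pqj}$ also equals $\dim\Psi_{pq(j+1)}$; this identification turns every assertion about a ratio of dimensions of $\Psi$-spaces into the corresponding assertion about $\Phi$-spaces, which is precisely what the equalities between the left-hand and right-hand expressions in the statement record. For $\dim\Phi_{(p+1)qj}/\dim\Phi_{pqj}$ and $\dim\Phi_{p(q+1)j}/\dim\Phi_{pqj}$ one cancels the factors unaffected by the shift and is left with the binomial ratios $\binom{p+n}{n-2}/\binom{p+n-1}{n-2} = (p+n)/(p+2)$, respectively $\binom{q+n-1}{n-2}/\binom{q+n-2}{n-2} = (q+n-1)/(q+1)$, together with the elementary ratios of the remaining linear factors; collecting terms gives the two displayed formulas. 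For $\dim\Psi_{pqj}/\dim\Phi_{pqj}$ one uses that $\dim\Psi_{pqj}$ is obtained from the formula for $\dim\Phi_{pqj}$ by the substitution $j\mapsto j-1$, so the quotient reduces to $\binom{n-2}{j-1}/\binom{n-2}{j} = j/(n-1-j)$ times the elementary factor $\bigl((p+n-1-j)(q+j)\bigr)/\bigl((p+n-j)(q+j-1)\bigr)$, which is exactly the claimed expression.

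The only points needing a little care are bookkeeping ones: one must respect the convention that $0/0$ fractions are read as $1$ in the degenerate cases, and one must check that each parameter shift keeps the indices inside the set ($I_j$, $I_{j+1}$, or $I_j\cap I_{j+1}$) for which the relevant identity is asserted. I expect no substantial obstacle.
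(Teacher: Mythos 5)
Your proposal is correct and matches the paper's (implicit) argument: the paper gives no written proof of this lemma, stating only that the identities "may be easily deduced from" \eqref{eq:phipqj-norm}, \eqref{eq:psipqj-norm}, \eqref{eq:deflambdapqj} and Lemma~\ref{lem:dimensions}, which is exactly the direct-substitution-and-cancellation computation you carry out. Your individual cancellations (in particular the binomial ratios $(p+n)/(p+2)$, $(q+n-1)/(q+1)$ and $\binom{n-2}{j-1}/\binom{n-2}{j}=j/(n-1-j)$) all check out.
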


Now we can compute \emph{all} the coefficients.

\begin{proof}[Proof of Theorem \ref{thm:Theorem-delta}]
As already observed, the coefficients which are not explicitly mentioned in the statement of Theorem~\ref{thm:Theorem-delta} vanish by Proposition~\ref{prp:TheoremProjectionv} and Lemma~\ref{lem:decomp-phi-psi}.
The remaining coefficients will be computed in four steps.

\emph{Step 1: we find $\Bar{\delta}^{\Phi\Phi}_{ppq(q+1)j}$, $\delta^{\Phi\Phi}_{p(p+1)qqj}$, $\Bar{\delta}^{\Psi\Psi}_{ppq(q+1)j}$, and $\delta^{\Psi\Psi}_{p(p+1)qqj}$.}
These coefficients may be computed immediately by inserting the expressions from Lemma~\ref{lem:collection} into the corresponding formulae in Proposition~\ref{prp:fracdimVpqj}.

\emph{Step 2: we find $\Bar{\delta}^{\Phi\Phi}_{p(p-1)qqj}$ and $\delta^{\Psi\Psi}_{ppq(q-1)j}$.}
By Corollary~\ref{cor:TheoremProjectionv},
\[
\frac{\dim \Phi_{p(q+1)j}}{\dim \Phi_{pqj}} \Bar{\delta}^{\Phi\Phi}_{ppq(q+1)j} + \frac{\dim \Phi_{(p-1)qj}}{\dim \Phi_{pqj}} \Bar{\delta}^{\Phi\Phi}_{p(p-1)qqj} = 1,
\]
hence
\[
\Bar{\delta}^{\Phi\Phi}_{p(p-1)qqj} 
= \frac{\dim \Phi_{pqj}}{\dim \Phi_{(p-1)qj}} \biggl( 1-\frac{\dim \Phi_{p(q+1)j}}{\dim \Phi_{pqj}} \Bar{\delta}^{\Phi\Phi}_{ppq(q+1)j} \biggr).
\]
Analogously,
\[
\delta^{\Psi\Psi}_{ppq(q-1)j} 
= \frac{\dim \Psi_{pqj}}{\dim \Psi_{p(q-1)j}} \biggl( 1-\frac{\dim \Psi_{(p+1)qj}}{\dim \Psi_{pqj}} \delta^{\Psi\Psi}_{p(p+1)qqj} \biggr).
\]
Combining the expressions from Step 1 and Lemma~\ref{lem:collection} with these formulae, we complete Step 2.

\emph{Step 3: we find $\Bar{\delta}^{\Psi\Psi}_{p(p-1)qqj}$ and $\delta^{\Phi\Phi}_{ppq(q-1)j}$.}
The coefficients $\Bar{\delta}^{\Psi\Psi}_{p(p-1)qq(n-1)}$ and $\delta^{\Phi\Phi}_{ppq(q-1)0}$ may be computed as in Step 2.
For the other coefficients, from Corollary~\ref{cor:gammarelation} we see that
\begin{align*}
\Bar{\delta}^{\Psi\Psi}_{p(p-1)qqj} &= \frac{\lambda_{pq(j-1)}^2}{\lambda_{(p-1)q(j-1)}^2} \Bar{\delta}^{\Phi\Phi}_{p(p-1)qq(j-1)},\\
\delta^{\Phi\Phi}_{ppq(q-1)j} &= \frac{\lambda_{pqj}^2}{\lambda_{p(q-1)j}^2} \delta^{\Psi\Psi}_{ppq(q-1)(j+1)},
\end{align*}
and the step follows from these formulae and the expressions from Step 2 and Lemma~\ref{lem:collection}.

\emph{Step 4: we find $\Bar{\delta}^{\Psi\Phi}_{ppqqj}$ and $\delta^{\Phi\Psi}_{ppqqj}$.}
Corollary~\ref{cor:TheoremProjectionv} implies that
\begin{align*}
\Bar{\delta}^{\Psi\Phi}_{ppqqj} 
&= \frac{\dim \Psi_{pqj}}{\dim \Phi_{pqj}} \biggl(1-\frac{\dim \Psi_{p(q+1)j}}{\dim \Psi_{pqj}} \Bar{\delta}^{\Psi\Psi}_{ppq(q+1)j} - \frac{\dim \Psi_{(p-1)qj}}{\dim \Psi_{pqj}} \Bar{\delta}^{\Psi\Psi}_{p(p-1)qqj} \biggr),\\
\delta^{\Phi\Psi}_{ppqqj} 
&= \frac{\dim \Phi_{pqj}}{\dim \Psi_{pqj}} \biggl(1-\frac{\dim \Phi_{(p+1)qj}}{\dim \Phi_{pqj}} \delta^{\Phi\Phi}_{p(p+1)qqj} - \frac{\dim \Phi_{p(q-1)j}}{\dim \Phi_{pqj}} \delta^{\Phi\Phi}_{ppq(q-1)j} \biggr),
\end{align*}
and the results follow by combining these formulae with the expressions from Steps 1 and 3 and Lemma~\ref{lem:collection}.
\end{proof}

\begin{corollary}\label{cor:Corollary-epsilon}
Let $0 \leq j \leq n-1$. 
For all $(p,q,\Upsilon) \in I_j \times Y_j$,
\[
\labs \lip z,w \rip\rabs^2 K^\Upsilon_{pqj}(z,w) = \sum_{(p',q',\Upsilon') \in I_j \times Y_j} \varepsilon^{\Upsilon\Upsilon'}_{pp'qq'j} K^{\Upsilon'}_{p'q'j}(z,w), \\
\]
where in particular
\begin{gather*}
\varepsilon^{\Phi\Phi}_{ppqqj}
= \Bar{\delta}^{\Phi\Phi}_{ppq(q+1)j} \delta^{\Phi\Phi}_{pp(q+1)qj} + \Bar{\delta}^{\Phi\Phi}_{p(p-1)qqj} \delta^{\Phi\Phi}_{(p-1)pqqj},
\\
\varepsilon^{\Psi\Psi}_{ppqqj}
= \delta^{\Psi\Psi}_{p(p+1)qqj} \Bar{\delta}^{\Psi\Psi}_{(p+1)pqqj} + \delta^{\Psi\Psi}_{ppq(q-1)j} \Bar{\delta}^{\Psi\Psi}_{pp(q-1)qj}
\end{gather*}
\end{corollary}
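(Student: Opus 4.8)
The plan is to obtain the decomposition by applying Theorem~\ref{thm:Theorem-delta} twice. Since $\labs \lip z,w \rip\rabs^2 = \lip z,w \rip \, \overline{\lip z,w \rip}$, one first uses \eqref{eq:deltabar} to write $\overline{\lip z,w \rip} K^\Upsilon_{pqj}$ as a combination of the kernels $K^{\Upsilon''}_{p''q''j}$, and then applies \eqref{eq:delta} to each of these. Collecting the coefficients of $K^{\Upsilon'}_{p'q'j}$ yields the claimed identity, with
\[
\varepsilon^{\Upsilon\Upsilon'}_{pp'qq'j} = \sum_{(p'',q'',\Upsilon'') \in I_j \times Y_j} \Bar{\delta}^{\Upsilon\Upsilon''}_{pp''qq''j} \, \delta^{\Upsilon''\Upsilon'}_{p''p'q''q'j}.
\]
All the manipulations are rearrangements of finite sums: the kernels $K^\Upsilon_{pqj}$ are smooth, since the spaces $\Upsilon_{pqj}$ are finite-dimensional spaces of smooth forms, and by Lemma~\ref{lem:decomp-phi-psi} each application of Theorem~\ref{thm:Theorem-delta} contributes at most three nonzero summands, so the sum defining $\varepsilon^{\Upsilon\Upsilon'}_{pp'qq'j}$ is finite. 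One may equally well compose the other way round, using $\labs \lip z,w \rip\rabs^2 = \overline{\lip z,w \rip}\,\lip z,w \rip$; the two prescriptions give the same coefficients because both compute the expansion of the kernel of multiplication by the fixed function $\labs \lip z,w \rip\rabs^2$, which is unique by the orthogonality relations of Proposition~\ref{prp:HS-projection}.

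To read off the diagonal coefficients it is convenient to choose, in each case, the order of the two multiplications for which the intermediate subspace is smallest. For $\Upsilon = \Phi$ I would multiply first by $\overline{\lip z,w \rip}$: by \eqref{eq:prodottozbarphi} the only intermediate pieces are $\Phi_{p(q+1)j}$ and $\Phi_{(p-1)qj}$, and the $\Phi_{pqj}$-component of $\lip z,w \rip$ times each of them is governed, via \eqref{eq:prodottozphi} and Theorem~\ref{thm:Theorem-delta}, by $\delta^{\Phi\Phi}_{pp(q+1)qj}$ and $\delta^{\Phi\Phi}_{(p-1)pqqj}$ respectively; this gives the stated formula for $\varepsilon^{\Phi\Phi}_{ppqqj}$. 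Dually, for $\Upsilon = \Psi$ I would multiply first by $\lip z,w \rip$: by \eqref{eq:prodottozpsi} the only intermediate pieces are $\Psi_{(p+1)qj}$ and $\Psi_{p(q-1)j}$, and applying $\overline{\lip z,w \rip}$ and using \eqref{eq:prodottozbarpsi} the $\Psi_{pqj}$-component picks up $\Bar{\delta}^{\Psi\Psi}_{(p+1)pqqj}$ and $\Bar{\delta}^{\Psi\Psi}_{pp(q-1)qj}$, yielding the stated formula for $\varepsilon^{\Psi\Psi}_{ppqqj}$.

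The argument is essentially bookkeeping and presents no serious obstacle; the only points requiring a little care are the finiteness remark above, which legitimises rearranging the double sum, and keeping track of which of the $\Bar{\delta}$ and $\delta$ coefficients of Theorem~\ref{thm:Theorem-delta} vanish, so as to be sure that precisely two terms survive in each of the two diagonal coefficients. If the remaining $\varepsilon^{\Upsilon\Upsilon'}_{pp'qq'j}$ are wanted explicitly, they follow by substituting the values from Theorem~\ref{thm:Theorem-delta} into the composed-sum formula above, but only the two diagonal expressions are needed for the subsequent estimates.
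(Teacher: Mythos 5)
Your proposal is correct and is essentially the argument the paper intends: the corollary follows by composing the two decompositions \eqref{eq:deltabar} and \eqref{eq:delta} of Theorem~\ref{thm:Theorem-delta}, and your choice of order of the two multiplications (conjugate first for $\Phi$, unconjugated first for $\Psi$) together with the vanishing pattern of the $\delta$, $\Bar{\delta}$ coefficients is exactly what produces the two-term diagonal formulas as stated. The uniqueness of the expansion via Proposition~\ref{prp:HS-projection} correctly justifies that both orders yield the same coefficients.
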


\begin{lemma}\label{lem:conti}
Let $0 \leq j \leq n-1$.
For all $(p,q,\Upsilon) \in I_j \times Y_j$, 
\[
0
<\varepsilon^{\Upsilon\Upsilon}_{ppqqj}<1\,,\qquad 1-\varepsilon^{\Upsilon\Upsilon}_{ppqqj} 
\lesssim \frac{(2+p)(2+q)}{(2+p+q)^2}.
\]
\end{lemma}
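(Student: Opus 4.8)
The plan is to reduce everything to the explicit rational formulas for $\varepsilon^{\Upsilon\Upsilon}_{ppqqj}$ obtained by feeding the values of Theorem~\ref{thm:Theorem-delta} into Corollary~\ref{cor:Corollary-epsilon}. Carrying out the substitution (with the index shifts built into the notation) one finds
\begin{align*}
\varepsilon^{\Phi\Phi}_{ppqqj} &= \frac{q\,(q+1+j)(q+n-1)}{(q+j)(p+q+n)(p+q+n-1)} + \frac{p\,(p+n-1)(p+n-2-j)}{(p+n-1-j)(p+q+n-2)(p+q+n-1)}, \\
\varepsilon^{\Psi\Psi}_{ppqqj} &= \frac{(p+1)(p+n+1-j)(p+n)}{(p+n-j)(p+q+n)(p+q+n-1)} + \frac{(q-1)(q+n-2)(q+j-2)}{(q+j-1)(p+q+n-2)(p+q+n-1)},
\end{align*}
each a sum of two nonnegative terms; the duality of Remark~\ref{rem:symmetry} (which swaps $\Phi\leftrightarrow\Psi$, $p\leftrightarrow q-1$, $j\leftrightarrow n-1-j$) turns the second into the first, so I would only argue for $\varepsilon^{\Phi\Phi}_{ppqqj}$. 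In the handful of degenerate configurations --- $p=0$; $q=0$ (which forces $\Upsilon=\Phi$ and $j=0$); $n=2$ --- an out-of-range term drops out by the convention $\Upsilon_{p'q'j}=\{0\}$ for $(p',q',\Upsilon')\notin I_j\times Y_j$, and these I would check separately (e.g.\ $\varepsilon^{\Phi\Phi}_{pp000}=(p+1)/(p+n)$).

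For the qualitative bounds $0<\varepsilon^{\Upsilon\Upsilon}_{ppqqj}<1$ I would argue operator-theoretically, which I find cleaner than unpacking the formula. By Corollary~\ref{cor:Corollary-epsilon} the operator $B$ on $\Forms{j}$ with kernel $|\lip z,w\rip|^2\,K^\Upsilon_{pqj}(z,w)$ equals $\sum_{(p',q',\Upsilon')}\varepsilon^{\Upsilon\Upsilon'}_{pp'qq'j}P^{\Upsilon'}_{p'q'j}$, hence acts on $\Upsilon_{pqj}$ as multiplication by $\varepsilon^{\Upsilon\Upsilon}_{ppqqj}$. Writing $|\lip z,w\rip|^2=\sum_{m,m'}(z_m\Bar z_{m'})(\Bar w_m w_{m'})$ shows $B=\sum_{m,m'}M_m\Bar M_{m'}\,P^\Upsilon_{pqj}\,(M_m\Bar M_{m'})^*$, whence $\lip Bf,f\rip=\sum_{m,m'}\lnorm P^\Upsilon_{pqj}(\Bar z_m z_{m'}f)\rnorm^2$ and so $0\le\lip Bf,f\rip\le\sum_{m,m'}\lnorm\Bar z_m z_{m'}f\rnorm^2=\lnorm f\rnorm^2$; thus $B$ is a positive contraction and $0\le\varepsilon^{\Upsilon\Upsilon}_{ppqqj}\le1$. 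Strict positivity is visible in the formula (the first summand is positive once $q\ge1$, and the $q=0$ case is the one computed above); for strict $\varepsilon^{\Upsilon\Upsilon}_{ppqqj}<1$ one notes that $\lip Bf,f\rip=\lnorm f\rnorm^2$ would force $\Bar z_m z_{m'}f\in\Upsilon_{pqj}$ for all $m,m'$, which fails for $0\ne f\in\Upsilon_{pqj}$: by Lemma~\ref{lem:decomp-phi-psi} some $\Bar z_m f$ has a nonzero component in $\Upsilon_{p(q+1)j}$ (as $\Bar\delta^{\Upsilon\Upsilon}_{ppq(q+1)j}\ne0$), and multiplying it by a suitable $z_{m'}$ reintroduces a nonzero component in $\Upsilon_{(p+1)(q+1)j}$ (as $\delta^{\Upsilon\Upsilon}_{p(p+1)(q+1)(q+1)j}\ne0$) that cannot cancel.

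For the quantitative estimate the efficient route is via the conservation identity
\[
\sum_{(p',q',\Upsilon')\in I_j\times Y_j}\frac{\dim\Upsilon'_{p'q'j}}{\dim\Upsilon_{pqj}}\,\varepsilon^{\Upsilon\Upsilon'}_{pp'qq'j}=1,
\]
which one proves exactly as Corollary~\ref{cor:TheoremProjectionv}, from the $|\lip z,w\rip|^2$-analogue of Proposition~\ref{prp:TheoremProjectionv} together with $\sum_{m,m'}\lnorm\Bar z_m z_{m'}\alpha\rnorm^2=\lnorm\alpha\rnorm^2$. Then $1-\varepsilon^{\Phi\Phi}_{ppqqj}=\sum_{(p',q',\Upsilon')\ne(p,q,\Phi)}\frac{\dim\Upsilon'_{p'q'j}}{\dim\Phi_{pqj}}\,\varepsilon^{\Phi\Upsilon'}_{pp'qq'j}$, and by Lemma~\ref{lem:decomp-phi-psi} the only nonzero summands are the four with $(p',q',\Upsilon')$ among $(p+1,q+1,\Phi)$, $(p-1,q-1,\Phi)$, $(p,q+1,\Psi)$, $(p-1,q,\Psi)$. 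Each such $\varepsilon^{\Phi\Upsilon'}_{pp'qq'j}$ equals a product of exactly two of the coefficients of Theorem~\ref{thm:Theorem-delta} (the two steps in $|\lip z,w\rip|^2=\Bar{\lip z,w\rip}\,\lip z,w\rip$ that reach that target), and a glance at the table shows that in each case one factor is $\lesssim(2+q)/(2+p+q)$ and the other $\lesssim(2+p)/(2+p+q)$, while the dimension ratios are $\simeq1$ with constants depending only on $n,j$ by Lemma~\ref{lem:collection}; combining them one bounds each of the four contributions --- and hence $1-\varepsilon^{\Phi\Phi}_{ppqqj}$ --- by a constant multiple of $(2+p)(2+q)/(2+p+q)^2$, using elementary inequalities such as $(q+n-1)(2+p+q)\le(n-1)(2+q)(p+q+n)$ and $(2+p+q)^2\le(p+1)(p+q+1)(2+p)(2+q)$. (One could instead merge the two terms of $\varepsilon^{\Phi\Phi}_{ppqqj}$ over the common denominator $(q+j)(p+n-1-j)(p+q+n)(p+q+n-1)(p+q+n-2)$ and estimate the numerator directly; this also works but is more calculation.) The $\Psi$ case then follows from Remark~\ref{rem:symmetry}.

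The main obstacle is purely administrative: matching each of the four leakage coefficients with its precise pair of $\delta$/$\Bar\delta$ factors in the table of Theorem~\ref{thm:Theorem-delta}, treating the few degenerate index configurations, and running the short elementary estimates. Conceptually there is nothing delicate once Lemmas~\ref{lem:decomp-phi-psi} and~\ref{lem:collection} and the conservation identity are in hand.
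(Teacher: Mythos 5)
Your proposal is correct, and while it rests on the same inputs as the paper's proof (the coefficient table of Theorem~\ref{thm:Theorem-delta}, Corollary~\ref{cor:Corollary-epsilon}, and the $\Phi\leftrightarrow\Psi$ duality of Remark~\ref{rem:symmetry}), it organizes the two estimates quite differently. The paper works purely with the explicit rational expression: for $0<\varepsilon^{\Phi\Phi}_{ppqqj}<1$ it writes the two summands as a convex combination with weights $(q+n-1)/(p+q+n-1)$ and $p/(p+q+n-1)$ of numbers in $[0,1)$, and for the quantitative bound it splits $1-\varepsilon^{\Phi\Phi}_{ppqqj}=M+R_j$ and estimates each piece by direct algebra. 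You instead get $0\le\varepsilon^{\Upsilon\Upsilon}_{ppqqj}\le1$ by exhibiting the operator with kernel $\labs\lip z,w\rip\rabs^2K^\Upsilon_{pqj}(z,w)$ as a positive contraction, and you derive the upper estimate from the sum rule $\sum_{(p',q',\Upsilon')}(\dim\Upsilon'_{p'q'j}/\dim\Upsilon_{pqj})\,\varepsilon^{\Upsilon\Upsilon'}_{pp'qq'j}=1$ --- which does follow by applying Corollary~\ref{cor:TheoremProjectionv} twice through the factorization of $\varepsilon$ into $\Bar{\delta}$'s and $\delta$'s --- so that $1-\varepsilon^{\Upsilon\Upsilon}_{ppqqj}$ becomes a sum of four nonnegative leakage terms, each a product of exactly two table entries (unique intermediate space, by Lemma~\ref{lem:decomp-phi-psi}) times a dimension ratio that is $\simeq1$ by Lemma~\ref{lem:collection}. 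Your route is more structural: positivity of $1-\varepsilon$ and the bound $\varepsilon\le1$ come for free, and the source of the smallness (leakage into the four neighbouring representations) is transparent; the cost is the extra sum rule and four products to estimate instead of one two-term expression. The paper's route is shorter on the page but hinges on an algebraic regrouping that is less illuminating. The remaining work in your sketch --- the degenerate index configurations where the conventions $0/0=1$ and $\Upsilon_{p'q'j}=\lset0\rset$ intervene, and the elementary inequalities for each of the four products --- is genuinely routine and at the same level of detail the paper itself leaves to the reader (``the other cases may be checked similarly'').
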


\begin{proof}
We consider the case where $\Upsilon=\Phi$ and $1 \leq j \leq n-2$; the other cases may be checked similarly.
By Theorem~\ref{thm:Theorem-delta} and Corollary~\ref{cor:Corollary-epsilon}, $\varepsilon^{\Phi\Phi}_{ppqqj}$ is equal to
\[\begin{gathered}
\frac{q+1+j}{p+q+n} \cdot \frac{q}{q+j} \cdot \frac{q+n-1}{p+q+n-1}
+ \frac{p+n-1}{p+q+n-2} \cdot \frac{p+n-2-j}{p+n-1-j} \cdot \frac{p}{p+q+n-1}\\
=  \frac{q+1+j}{p+q+n} \cdot \frac{q}{q+j} \cdot \theta
+ \frac{p+n-1}{p+q+n-2} \cdot\frac{p+n-2-j}{p+n-1-j} \cdot (1 - \theta) \,
\end{gathered}\]
say, where $0 \leq \theta \leq 1$.
Here $q \neq 0$ since $j \geq 1$.
If $p+n-2-j \neq 0$, then $\varepsilon^{\Phi\Phi}_{ppqqj} $ is a convex combination of two numbers, each of which lies in the interval $(0,1)$, hence  $0<\varepsilon^{\Phi\Phi}_{ppqqj}<1$.
When $p+n-2-j = 0$, one of the summands is $0$ and the other lies in $(0,1)$.

Next  we write
\[\begin{aligned}
1-\varepsilon^{\Phi\Phi}_{ppqqj}
&= M + R_j,
\end{aligned}\]
where
\begin{align*}
M   &= 1- \frac{q}{p+q+n} \cdot\frac{q+n-1}{p+q+n-1} - \frac{p+n-1}{p+q+n-2} \cdot\frac{p}{p+q+n-1},\\
R_j &= - \frac{q}{p+q+n} \cdot \frac{1}{q+j} \cdot \frac{q+n-1}{p+q+n-1} + \frac{p+n-1}{p+q+n-2} \cdot \frac{1}{p+n-1-j} \cdot \frac{p}{p+q+n-1}.
\end{align*}

It is easy to see that $\labs R_j\rabs \lesssim (2+p+q)^{-2} (2+p) (2+q)$ when $0\le j\le n-2$.
Further,
\[\begin{aligned}
M 
&= \biggl( 1- \frac{q}{p+q+n} \biggr) \frac{q+n-1}{p+q+n-1} + \biggl(1 - \frac{p+n-1}{p+q+n-2}\biggr) \frac{p}{p+q+n-1} \\
&= \frac{p+n}{p+q+n} \cdot \frac{q+n-1}{p+q+n-1} + \frac{q-1}{p+q+n-2} \cdot \frac{p}{p+q+n-1}
\end{aligned}\]
and the estimate $M \lesssim (2+p+q)^{-2} (2+p) (2+q)$ follows immediately.

The bounds for $\varepsilon^{\Psi\Psi}_{ppqqj}$  may be established similarly, or alternatively by using the symmetry $\varepsilon^{\Psi\Psi}_{ppqqj} = \varepsilon^{\Phi\Phi}_{(q-1)(q-1)(p+1)(p+1)(n-1-j)}$ (see Remark \ref{rem:symmetry}).
\end{proof}

\section{Plancherel type weighted $L^2$ estimates}

We now prove a ``weighted  Plancherel estimate''.
We shall use the weight $\weight$ given by
\[
\weight(w,z) = \biglabs 1 - \labs\lip w,z\rip\rabs ^2\bigrabs^{1/2}
\qquad\forall w,z \in \C^n\,.
\]

We say that $K$ is a kernel polynomial if
\begin{equation}\label{eq:def-polynomials}
K=\sum_{(p,q,\Upsilon) \in I_j \times Y_j} c^\Upsilon_{pq} K^\Upsilon_{pqj},
\end{equation}
where only finitely many complex coefficients $c_{pq}^\Upsilon$ are nonzero.

We define the linear operators $M^\theta$ on the space of kernel polynomials for all $\theta \in [0,1]$ by
\[
M^\theta K=\sum_{(p,q,\Upsilon) \in I_j \times Y_j} (1-\varepsilon^{\Upsilon\Upsilon}_{ppqqj})^{\theta/2} \, c^\Upsilon_{pq} K^\Upsilon_{pqj},
\]
where $K$ is given by \eqref{eq:def-polynomials} and $\varepsilon^{\Upsilon\Upsilon}_{ppqqj}$ by Corollary \ref{cor:Corollary-epsilon}.
Note that $M^0$ is the identity operator.
We write $M$ in place of $M^1$.

\begin{proposition}\label{prp:L-4.6}
Let $0 \leq j \leq n-1$.
There is a constant $C$, depending only on $n$ and $j$, such that
\[
\lnorm
\labs
  \weight(\dummy ,w) K(\dummy ,w)
  \rabs_{\HS}
  \rnorm_2
\leq C
\lnorm
\labs
M K(\dummy ,w)
  \rabs_{\HS}\rnorm_2 
  \qquad\forall w \in \Sphere
\]
for all kernel polynomials $K$.
\end{proposition}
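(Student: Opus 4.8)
The idea is to turn the asserted norm comparison into an inequality between two quadratic forms in the coefficients of $K$, and then to exploit positivity together with a uniform ``finite band'' structure.

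Write $K = \sum_{(p,q,\Upsilon) \in I_j \times Y_j} c^\Upsilon_{pq} K^\Upsilon_{pqj}$ and, for $I = (p,q,\Upsilon) \in I_j \times Y_j$, set $d_I = c^\Upsilon_{pq} (\dim \Upsilon_{pqj})^{1/2}$. Applying the orthogonality relations of Proposition~\ref{prp:HS-projection} directly gives
\[
\lnorm \labs M K(\dummy,w) \rabs_{\HS} \rnorm_2^2 = \frac{1}{\sigma(\Sphere)} \sum_{I} A'_{II} \labs d_I \rabs^2,
\qquad\text{where } A'_{(p,q,\Upsilon),(p,q,\Upsilon)} = 1 - \varepsilon^{\Upsilon\Upsilon}_{ppqqj};
\]
in particular the left-hand side does not depend on $w$. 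Since $\weight(z,w)^2 = 1 - \labs \lip z,w\rip \rabs^2$, I would next use Corollary~\ref{cor:Corollary-epsilon} to write $\labs \lip z,w\rip\rabs^2 K(z,w)$ as a kernel polynomial and then apply Proposition~\ref{prp:HS-projection} a second time, which yields
\[
\lnorm \labs \weight(\dummy,w) K(\dummy,w) \rabs_{\HS} \rnorm_2^2 = \frac{1}{\sigma(\Sphere)} \sum_{I,\,J} A'_{IJ} \, d_I \, \overline{d_J},
\]
again independent of $w$ (so the quantifier over $w$ is vacuous), where off the diagonal $A'_{(p,q,\Upsilon),(p',q',\Upsilon')} = -\varepsilon^{\Upsilon\Upsilon'}_{pp'qq'j} (\dim \Upsilon'_{p'q'j}/\dim \Upsilon_{pqj})^{1/2}$. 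Thus the proposition reduces to the inequality $\sum_{I,J} A'_{IJ} d_I \overline{d_J} \le C^2 \sum_I A'_{II} \labs d_I\rabs^2$, uniformly in the (finitely supported, complex) family $(d_I)$.

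Two structural facts make this work. First, since $\lnorm \labs \weight(\dummy,w)K(\dummy,w)\rabs_{\HS}\rnorm_2^2 = \int_\Sphere \weight(z,w)^2 \labs K(z,w)\rabs_{\HS}^2 \, d\sigma(z) \ge 0$ for every choice of the $c^\Upsilon_{pq}$, the form $\sum_{I,J} A'_{IJ} d_I \overline{d_J}$ is nonnegative, so the matrix $A'$ is self-adjoint — hence symmetric, its entries being real — and positive semidefinite; in particular $\labs A'_{IJ}\rabs \le (A'_{II} A'_{JJ})^{1/2}$ for all $I,J$. (Conceptually, the map carrying a kernel $K_T(z,w)$ to $\labs \lip z,w\rip\rabs^2 K_T(z,w)$ is self-adjoint for the Hilbert--Schmidt inner product on operators.) Secondly, writing multiplication of a kernel by $\labs \lip z,w\rip\rabs^2 = \lip z,w\rip\,\overline{\lip z,w\rip}$ as a composition of two of the near-diagonal operators of Theorem~\ref{thm:Theorem-delta} (equivalently, inspecting Corollary~\ref{cor:Corollary-epsilon}), one sees that $\varepsilon^{\Upsilon\Upsilon'}_{pp'qq'j}$, and hence $A'_{IJ}$, vanishes unless $\labs p - p'\rabs \le 1$ and $\labs q - q'\rabs \le 1$; so each row of $A'$ has at most some absolute number $b$ of nonzero off-diagonal entries (and, by symmetry, so does each column).

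Given these, the conclusion is a Schur-test estimate: splitting the double sum into its diagonal and off-diagonal parts, bounding each off-diagonal $\labs A'_{IJ}\rabs$ by $(A'_{II} A'_{JJ})^{1/2}$ and invoking the arithmetic--geometric mean inequality gives
\[
\sum_{I,J} A'_{IJ} d_I \overline{d_J} \le (1+b) \sum_I A'_{II} \labs d_I\rabs^2,
\]
which is precisely $\lnorm \labs \weight(\dummy,w)K(\dummy,w)\rabs_{\HS}\rnorm_2 \le (1+b)^{1/2} \lnorm \labs MK(\dummy,w)\rabs_{\HS}\rnorm_2$, the assertion with $C = (1+b)^{1/2}$. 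I expect the only delicate point to be the band estimate — determining exactly which coefficients $\varepsilon^{\Upsilon\Upsilon'}_{pp'qq'j}$ are nonzero and checking that the off-diagonal count $b$ is bounded uniformly in $p$, $q$, $j$ — while the positivity argument and the Schur-test computation are routine. Note that Lemma~\ref{lem:conti} is not needed here: positivity alone already forces $A'_{II} = 1 - \varepsilon^{\Upsilon\Upsilon}_{ppqqj} \ge 0$.
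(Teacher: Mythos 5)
Your proof is correct, but it takes a genuinely different route from the paper's. The paper splits $K$ into the (at most six) pieces $K^\Upsilon_\ell$ according to the type $\Upsilon$ and the residue of $p+q$ modulo $3$; within each piece the cross terms $\int_{\Sphere}\lip \lip z,w\rip K^\Upsilon_{pqj}(z,w),\lip z,w\rip K^\Upsilon_{p'q'j}(z,w)\rip_{\HS}\,d\sigma(z)$ vanish identically, because multiplication by $\lip z,w\rip$ shifts $p+q$ by exactly $\pm 1$ by Theorem~\ref{thm:Theorem-delta}, so the paper obtains the \emph{exact identity} $\lnorm\labs\weight(\dummy,w)K^\Upsilon_\ell(\dummy,w)\rabs_{\HS}\rnorm_2=\lnorm\labs MK^\Upsilon_\ell(\dummy,w)\rabs_{\HS}\rnorm_2$ and then recombines the pieces with the triangle and Cauchy--Schwarz inequalities, yielding $C=\sqrt{6}$. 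You instead keep all cross terms and control them by the positive semidefiniteness of the quadratic form (which gives $\labs A'_{IJ}\rabs\le (A'_{II}A'_{JJ})^{1/2}$ and, incidentally, disposes of any worry about rows with $A'_{II}=0$, since then the whole row vanishes) together with the uniformly banded support $\labs p-p'\rabs\le 1$, $\labs q-q'\rabs\le 1$ of the $\varepsilon$'s, which does follow by composing the two decompositions of Theorem~\ref{thm:Theorem-delta} as you indicate; the Schur test then gives $C=(1+b)^{1/2}$ with $b$ an absolute constant (at most $17$). Your argument is more robust --- it would survive if the coefficients had a larger but still uniformly bounded band --- at the cost of a worse constant and of losing the exact per-piece identity that the mod-$3$ decomposition reveals; both arguments rest on the same inputs (Proposition~\ref{prp:HS-projection} and Theorem~\ref{thm:Theorem-delta} via Corollary~\ref{cor:Corollary-epsilon}), and you are right that Lemma~\ref{lem:conti} is not needed at this stage.
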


\begin{proof}
We write $K =\sum_{\Upsilon \in \TT_j} \sum_{\ell=0}^2 K^\Upsilon_\ell$, where $K^\Upsilon_\ell$ consists only of those terms in \eqref{eq:def-polynomials} corresponding to the given $\Upsilon$ for which $p+q\equiv \ell \pmod 3$, that is,
\[
K^\Upsilon_\ell=\sum_{(p,q) \in I_j^\ell}c^\Upsilon_{pq} K^\Upsilon_{pqj}  ,
\]
where $I_j^\ell$ denotes $\lset (p,q) \in I_j : p+q \equiv \ell \pmod 3 \rset$.
Then $M K= \sum_{\Upsilon \in \TT_j} \sum_{\ell=0}^2 M K^\Upsilon_\ell$, and Proposition~\ref{prp:HS-projection} implies that
\[
\lnorm \labs M K(\dummy ,w) \rabs_{\HS}  \rnorm_2^2
= \sum_{\Upsilon \in \TT_j} \sum_{\ell=0}^2 \lnorm \labs M K_\ell^\Upsilon (\dummy ,w) \rabs_{\HS} \rnorm_2^2.
\]
Since
\[
\lnorm \labs M K^\Upsilon_\ell (\dummy ,w) \rabs_{\HS} \rnorm_2 
\leq \lnorm \labs M K(\dummy ,w) \rabs_{\HS} \rnorm_2\,,
\]
for $\Upsilon \in \TT_j$ and $\ell=0,1,2$, it suffices to prove that
\begin{equation}\label{eq:Eq-4.6}
\lnorm
\labs
  \weight(\dummy ,w) K^\Upsilon_\ell (\dummy ,w)
  \rabs_{\HS}
  \rnorm_2
\leq C
\lnorm
\labs
M  K^\Upsilon_\ell (\dummy ,w)
  \rabs_{\HS}\rnorm_2\,.
\end{equation}

We observe that
\begin{equation}\label{eq:intermedia}\begin{aligned}
&\lnorm
\labs
  \weight(\dummy ,w) K^\Upsilon_\ell (\dummy ,w)
  \rabs_{\HS}
  \rnorm_2^2\\
&=
\int_{\Sphere}  \labs K^\Upsilon_\ell (z,w) \rabs_{\HS}^2 \,d\sigma (z)
-\int_{\Sphere}  \labs \lip z,w\rip\rabs^2  \labs K^\Upsilon_\ell (z,w)   \rabs_{\HS}^2 \,d\sigma (z)
\\
&=
\lnorm
\labs
 K^\Upsilon_\ell (\dummy ,w)  \rabs_{\HS}\rnorm_2^2
-\lnorm
 \labs \lip \dummy ,w\rip K^\Upsilon_\ell (\dummy ,w)
  \rabs_{\HS} \rnorm_2^2
 \,.
\end{aligned}\end{equation}

To prove \eqref{eq:Eq-4.6}, note first from Proposition~\ref{prp:HS-projection} that
\begin{equation*}
\lnorm
\labs
 K^\Upsilon_\ell (\dummy ,w)
  \rabs_{\HS}
\rnorm_2^2
=
\sum_{(p,q) \in I_j^\ell} \labs c^\Upsilon_{pq}\rabs^2 \lnorm \labs K^\Upsilon_{pqj}(\dummy ,w)\rabs_{\HS}
 \rnorm_2^2.
\end{equation*}

A similar expression is needed for $\lnorm \labs \lip \dummy ,w\rip  K^\Upsilon_\ell (\dummy ,w)  \rabs_{\HS} \rnorm_2^2 $.
Observe that
\begin{equation*}
\begin{aligned}
&\lnorm
\labs
\lip \dummy ,w\rip
 K^\Upsilon_\ell (\dummy ,w)
  \rabs_{\HS}
\rnorm_2^2 \\
&\qquad=\int_{\Sphere}
 \labs \lip z, w\rip\rabs^2\bigglip \sum_{(p,q) \in I_j^\ell}c^\Upsilon_{pq}
K^\Upsilon_{pqj}(z,w)
,  \sum_{(p',q')\in I_j^ \ell} c^\Upsilon_{p'q'}
K^\Upsilon_{p'q'j}
 (z,w) \biggrip_{\HS} \, d\sigma(z)\\
&\qquad=\sum_{(p,q) \in I_j^\ell}\sum_{(p',q')\in I_j ^ \ell}
 c^\Upsilon_{pq} c^\Upsilon_{p'q'}
 \int_{\Sphere}
\lip
  \lip z,w\rip
K^\Upsilon_{pqj}(z,w)
,
 {\lip z,w\rip}
K^\Upsilon_{p'q'j}
 (z,w) \rip_{\HS} \,d\sigma(z)\,.\\
\end{aligned}
\end{equation*}

We proved in Theorem \ref{thm:Theorem-delta} that
\begin{align*}
\lip z,w \rip K^\Psi_{pqj}(z,w) &= \delta^{\Psi\Psi}_{p(p+1)qqj} K^\Psi_{(p+1)qj}(z,w) + \delta^{\Psi\Psi}_{ppq(q-1)j} K^\Psi_{p(q-1)j}(z,w), \\
\lip z,w \rip K^\Phi_{pqj}(z,w) &= \delta^{\Phi\Phi}_{p(p+1)qqj} K^\Phi_{(p+1)qj}(z,w) + \delta^{\Phi\Phi}_{ppq(q-1)j} K^\Phi_{p(q-1)j}(z,w) 
\\&\qquad
+ \delta^{\Phi\Psi}_{ppqqj} K^\Psi_{pqj}(z,w).
\end{align*}
Hence, from Proposition~\ref{prp:HS-projection}, if $p+q\equiv p'+q' \pmod 3$, then
\[
\int_{\Sphere} \biglip
\lip z,w\rip
K^\Upsilon_{pqj}(z,w)
,
\lip z,w\rip
K^\Upsilon_{p'q'j}
 (z,w) \bigrip_{\HS} \, d\sigma(z)=0
\]
unless $(p,q)=(p',q')$.
Thus, from Corollary \ref{cor:Corollary-epsilon},
\[\begin{aligned}
&\lnorm
\labs
\lip \dummy ,w\rip
 K^\Upsilon_\ell (\dummy ,w)
  \rabs_{\HS}
\rnorm_2^2 \\
&\qquad=\sum_{(p,q) \in I_j^\ell}
\labs  c^\Upsilon_{pq}\rabs^2
 \int_{\Sphere}
\labs   \lip z,w\rip\rabs^2
\biglip K^\Upsilon_{pqj}(z,w)
,
K^\Upsilon_{pqj}(z,w) \bigrip_{\HS} \,d\sigma(z) \\
&\qquad=\sum_{(p,q) \in I_j^\ell}
  \sum_{(p',q',\Upsilon') \in I_j \times Y_j}
\labs  c^\Upsilon_{pq}\rabs^2 \varepsilon^{\Upsilon\Upsilon'}_{pp'qq'j}
 \int_{\Sphere}\biglip K^{\Upsilon'}_{p'q'j}(z,w),
K^\Upsilon_{pqj}(z,w) \bigrip_{\HS} \,d\sigma(z) \\
&\qquad=\sum_{(p,q) \in I_j^\ell}
\labs  c^\Upsilon_{pq}\rabs^2
 \varepsilon^{\Upsilon\Upsilon}_{ppqqj}
 \int_{\Sphere}\biglip  K^\Upsilon_{pqj}(z,w) ,K^\Upsilon_{pqj} (z,w)
\bigrip_{\HS} \,d\sigma(z) \\
&\qquad=\sum_{(p,q) \in I_j^\ell}
\labs  c^\Upsilon_{pq}\rabs^2
 \varepsilon^{\Upsilon\Upsilon}_{ppqqj}
\lnorm \labs  K^\Upsilon_{pqj}(\dummy ,w)\rabs_{\HS}
\rnorm_2^2\,.
\end{aligned}\]

Comparing the expressions for $\lnorm \labs  K^\Upsilon_\ell (\dummy ,w)   \rabs_{\HS} \rnorm_2^2$ and $\lnorm \labs \lip \dummy ,w\rip  K^\Upsilon_\ell (\dummy ,w) \rabs_{\HS} \rnorm_2^2$ above, from \eqref{eq:intermedia} we obtain 
\begin{align*}
&\lnorm
\labs
  \weight(\dummy ,w) K^\Upsilon_\ell (\dummy ,w)
  \rabs_{\HS}
  \rnorm_2^2
\\
&\qquad=
\lnorm
\labs
 K^\Upsilon_\ell (\dummy ,w)  \rabs_{\HS}\rnorm_2^2
-\lnorm
 \labs \lip \dummy ,w\rip\rabs
 \labs K^\Upsilon_\ell (\dummy ,w)
  \rabs_{\HS} \rnorm_2^2 \\
&\qquad=
\sum_{(p,q) \in I_j^\ell} \labs c^\Upsilon_{pq}\rabs^2
\bigl(
1-
 \varepsilon^{\Upsilon\Upsilon}_{ppqqj}\bigr)
\lnorm
\labs
 K^\Upsilon_{pqj} (\dummy ,w)  \rabs_{\HS}\rnorm_2^2
\\
&\qquad=\lnorm
\labs
 M K^\Upsilon_\ell (\dummy ,w)  \rabs_{\HS}\rnorm_2^2\,,
\end{align*}
proving \eqref{eq:Eq-4.6}.
\end{proof}

\begin{corollary}\label{cor:C-5.1v}
There is a constant $C$, depending only on $n$ and $j$, such that 
\begin{equation}\label{eq:convex}
\lnorm
\labs
  \weight(\dummy ,w)\rabs^\theta \labs K(\dummy ,w)
  \rabs_{\HS}
  \rnorm_2
\leq C
\lnorm
\labs
M^{\theta}  K(\dummy ,w)
  \rabs_{\HS}\rnorm_2
  \qquad\forall w \in \Sphere
\end{equation}
for all kernel polynomials $K$ and all $\theta \in [0,1]$.
\end{corollary}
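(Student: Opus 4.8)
The plan is to deduce \eqref{eq:convex} for all $\theta\in[0,1]$ from the two endpoint cases by a standard complex-interpolation argument. The case $\theta=1$ is precisely Proposition~\ref{prp:L-4.6}, and $\theta=0$ is trivial since $M^0$ is the identity; the intermediate cases will follow from the three-lines lemma applied to a suitable analytic family of operators. Fix $w\in\Sphere$ and write $\lnorm g\rnorm$ for $\lnorm\labs g(\dummy,w)\rabs_{\HS}\rnorm_2$ (with $w$ understood). Since $1-\varepsilon^{\Upsilon\Upsilon}_{ppqqj}\in(0,1)$ by Lemma~\ref{lem:conti}, for each $\theta$ the operator $M^\theta$ is a bijection of the space of kernel polynomials onto itself, whose inverse $M^{-\theta}$ multiplies the coefficient of $K^\Upsilon_{pqj}$ by $(1-\varepsilon^{\Upsilon\Upsilon}_{ppqqj})^{-\theta/2}$; substituting $K=M^{-\theta}L$, the estimate \eqref{eq:convex} is thus equivalent to
\[
\lnorm \weight(\dummy,w)^\theta\,(M^{-\theta}L)(\dummy,w)\rnorm \leq C\,\lnorm L\rnorm,
\]
uniformly in $w\in\Sphere$ and $\theta\in[0,1]$, for all kernel polynomials $L$.

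First I would introduce, for $s$ in the closed strip $\lset s\in\C:0\le\Re s\le1\rset$, the linear operator $T_s$ defined on kernel polynomials $L=\sum_{(p,q,\Upsilon)\in I_j\times Y_j}c^\Upsilon_{pq}K^\Upsilon_{pqj}$ by
\[
T_s L = \weight(\dummy,w)^s\,(M^{-s}L)(\dummy,w) = \sum_{(p,q,\Upsilon)\in I_j\times Y_j}\weight(\dummy,w)^s\,(1-\varepsilon^{\Upsilon\Upsilon}_{ppqqj})^{-s/2}\,c^\Upsilon_{pq}\,K^\Upsilon_{pqj}(\dummy,w),
\]
where $\weight(z,w)^s=e^{s\log\weight(z,w)}$ with the real logarithm. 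This is well defined for $\sigma$-almost every $z$, since $\lset z\in\Sphere:\weight(z,w)=0\rset$ is the circle $\lset\lambda w:\labs\lambda\rabs=1\rset$, of measure zero; and because $0\le\weight(z,w)\le1$ we have $\labs\weight(z,w)^s\rabs=\weight(z,w)^{\Re s}\le1$ throughout the strip. I would then verify the boundary bounds using the orthogonality relations of Proposition~\ref{prp:HS-projection}. On $\Re s=0$: since $\labs\weight(z,w)^{it}\rabs=1$ a.e.\ and $\labs(1-\varepsilon^{\Upsilon\Upsilon}_{ppqqj})^{-it/2}\rabs=1$, Proposition~\ref{prp:HS-projection} gives $\lnorm T_{it}L\rnorm=\lnorm L\rnorm$. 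On $\Re s=1$: here $\labs\weight(z,w)^{1+it}\rabs=\weight(z,w)$ a.e., so $\lnorm T_{1+it}L\rnorm=\lnorm\weight(\dummy,w)\labs\tilde K(\dummy,w)\rabs_{\HS}\rnorm_2$ where $\tilde K=M^{-(1+it)}L$ is a kernel polynomial; applying Proposition~\ref{prp:L-4.6} to $\tilde K$ and noting that $M\tilde K=\sum(1-\varepsilon^{\Upsilon\Upsilon}_{ppqqj})^{-it/2}c^\Upsilon_{pq}K^\Upsilon_{pqj}$, so that $\lnorm M\tilde K\rnorm=\lnorm L\rnorm$ by Proposition~\ref{prp:HS-projection} again, yields $\lnorm T_{1+it}L\rnorm\le C\lnorm L\rnorm$.

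For a fixed $L$ (a finite sum) and any square-integrable section $g$, the function $s\mapsto\int_\Sphere\langle (T_sL)(z),g(z)\rangle_{\HS}\,d\sigma(z)$ is analytic on the open strip and continuous and bounded on its closure, because each summand $\weight(z,w)^s(1-\varepsilon^{\Upsilon\Upsilon}_{ppqqj})^{-s/2}$ is entire in $s$ for almost every fixed $z$, while the uniform bound $\labs\weight(z,w)^s\rabs\le1$ provides the domination needed to pass the $s$-derivative under the integral sign. The three-lines lemma then gives $\bigl\lvert\int_\Sphere\langle(T_\theta L)(z),g(z)\rangle_{\HS}\,d\sigma(z)\bigr\rvert\le C^\theta\,\lnorm L\rnorm\,\lnorm\labs g\rabs_{\HS}\rnorm_2$ for $\theta\in(0,1)$; taking the supremum over $g$ gives $\lnorm T_\theta L\rnorm\le C^\theta\lnorm L\rnorm\le\max(1,C)\lnorm L\rnorm$. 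Specialising $L=M^\theta K$, so that $T_\theta L=\weight(\dummy,w)^\theta K(\dummy,w)$, recovers \eqref{eq:convex}.

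The reason this route seems unavoidable, and the point I expect to require the most care, is that the clean combinatorial identities underlying the case $\theta=1$ do not persist for non-integer $\theta$: for such $\theta$ the factor $\weight(\dummy,w)^{2\theta}=(1-\labs\lip\dummy,w\rip\rabs^2)^\theta$ expands as an infinite series in the powers $\labs\lip\dummy,w\rip\rabs^{2k}$, and each of these spreads a single $K^\Upsilon_{pqj}$ over a growing range of indices, so one cannot reduce matters to an orthogonal sum of scalar identities as in the proof of Proposition~\ref{prp:L-4.6}. A naive pointwise Hölder estimate, using $\weight^\theta=\weight^\theta\cdot1^{1-\theta}$, only bounds $\lnorm\weight(\dummy,w)^\theta\labs K(\dummy,w)\rabs_{\HS}\rnorm_2$ by $\lnorm\weight(\dummy,w)\labs K(\dummy,w)\rabs_{\HS}\rnorm_2^\theta\lnorm\labs K(\dummy,w)\rabs_{\HS}\rnorm_2^{1-\theta}$, which does not appear to suffice, precisely because multiplication by $\weight(\dummy,w)$ does not commute with the diagonal operators $M^\theta$. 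The only genuine technical issue in the interpolation argument itself is the admissibility of the analytic family near the zero set of $\weight(\dummy,w)$; this is settled by the bounds $\weight(z,w)^{\Re s}\le1$ together with $x^{\Re s}\labs\log x\rabs\le C$ for $x\in(0,1]$, the latter uniform for $s$ in compact subsets of $\lset0<\Re s<1\rset$.
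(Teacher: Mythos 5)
Your proof is correct and follows essentially the same route as the paper: both deduce the general case by interpolation with change of measure between the trivial endpoint $\theta=0$ (handled via Proposition \ref{prp:HS-projection}) and the endpoint $\theta=1$ supplied by Proposition \ref{prp:L-4.6}. The only difference is that the paper packages the interpolation by applying the Stein--Weiss theorem to the operator $c\mapsto K$ between weighted sequence spaces and weighted $L^2$ spaces of sections, whereas you reprove that step by hand with an explicit analytic family and the three-lines lemma; your boundary estimates and the admissibility discussion near the zero set of $\weight(\dummy,w)$ are both sound.
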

\begin{proof}
Let $T$ be the linear operator that maps a sequence $c = (c_{pq}^\Upsilon)_{(p,q,\Upsilon) \in I_j \times Y_j}$ of complex numbers with a finite number of nonzero terms to the associated kernel polynomial $K$, defined as in \eqref{eq:def-polynomials}. 
Then \eqref{eq:convex} is equivalent to the boundedness of the operator $T$ between suitable weighted Lebesgue spaces.

More precisely, define $\ell^2_\theta$ to be  the weighted space of sequences $c$ such that
\[
\lnorm c \rnorm_\theta = \biggl( \sum_{(p,q,\Upsilon) \in I_j \times Y_j} (1-\varepsilon^\Upsilon_{ppqqj})^{\theta} \frac{\dim \Upsilon_{pqj}}{\sigma(\Sphere)} \labs c_{pq}^\Upsilon\rabs^2 \biggr)^{1/2} < \infty.
\]
Moreover, for any fixed $w \in \Sphere$, let $\FE$ be the vector bundle $\Hom(\Lambda^{0,j}_w,\Lambda^{0,j})$ on $\Sphere$, with fibre $\FE_z = \Hom(\Lambda^{0,j}_w,\Lambda^{0,j}_z)$ endowed with the Hilbert--Schmidt inner product for all $z \in \Sphere$, and define the weighted space of sections $\Gamma_\theta = L^2(\FE,\labs \weight(z,w)\rabs^{2\theta} \,d\sigma(z))$. 
Then, as a consequence of Proposition \ref{prp:HS-projection},
\eqref{eq:convex} is equivalent to the boundedness of the operator $T$ from $\ell^2_\theta$ to $\Gamma_\theta$.

On the one hand,
\[
\lnorm
\labs
  \weight(\dummy ,w)\rabs^{0}\labs K(\dummy ,w)
  \rabs_{\HS}
  \rnorm_2^2=
\lnorm \labs
 K(\dummy ,w)
  \rabs_{\HS}
  \rnorm_2^2
=
\lnorm
\labs
M^{0}  K(\dummy ,w)
  \rabs_{\HS}\rnorm_2^2
  \qquad\forall w \in \Sphere
\]
trivially, and on the other, from Proposition~\ref{prp:L-4.6},
\[
\lnorm
\labs
  \weight(\dummy ,w)\rabs \labs K(\dummy ,w)
  \rabs_{\HS}
  \rnorm_2^2
\leq C
\lnorm
\labs
M K(\dummy ,w)
  \rabs_{\HS}\rnorm_2^2
    \qquad\forall w \in \Sphere\,.
\]
Hence $T$ is bounded from $\ell^2_0$ to $\Gamma_0$ and from $\ell^2_1$ to $\Gamma_1$. 
A standard application of the Stein--Weiss theorem on interpolation with change of measure \cite{StW} to the operator $T$ then yields \eqref{eq:convex} for all $\theta \in [0,1]$.
\end{proof}

As a corollary to Proposition~\ref{prp:L-4.6}, we now prove an analogue of the ``Plancherel-type estimate'' in Assumption 2.5 of \cite{CS}.

For all positive integers $i$, set
\[
H^\Upsilon_{i} = \lset (p,q)\in I_j  : (i-1)^2 \leq (\lambda^\Upsilon_{pqj})^2 \leq i^2\rset\,.
\]

\begin{proposition}\label{prp:P-5.2v}
Suppose that $0 \le \theta<1/2$ and $0 \leq j \leq n-1$.
There is a constant $C$, depending only on $n$ and $\theta$, such that, if $N$ is a positive integer and $K$ is a kernel polynomial as in \eqref{eq:def-polynomials} with $c^\Upsilon_{pq}=0$ if $\lambda^\Upsilon_{pqj} \notin (0,N]$,  then
\begin{equation*}
  \lnorm \labs
  \weight(\dummy ,w)\rabs^{\theta} \labs K(\dummy ,w)
  \rabs_{\HS}
  \rnorm_2^2
\leq C\,N^{2n-1-2\theta}
    \sum_{i=2}^{N}
        C_i 
        \qquad \forall w \in \Sphere,
\end{equation*}
where
\[
C_i = \max\lset\labs c^\Upsilon_{pq} \rabs^2 : \Upsilon \in \TT_j, \, (p,q) \in H^\Upsilon_{i}\rset .
\]
\end{proposition}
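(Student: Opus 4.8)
The plan is to combine the weighted estimate of Corollary~\ref{cor:C-5.1v} with the Hilbert--Schmidt orthogonality relations of Proposition~\ref{prp:HS-projection} to turn the left-hand side into a weighted sum of the $\labs c^\Upsilon_{pq}\rabs^2$, and then to bound that sum by grouping the indices according to the size of $\lambda^\Upsilon_{pqj}$.

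First I would invoke Corollary~\ref{cor:C-5.1v}, which bounds the left-hand side by a constant multiple of $\lnorm\labs M^\theta K(\dummy,w)\rabs_{\HS}\rnorm_2^2$. Since $M^\theta K=\sum_{(p,q,\Upsilon)\in I_j\times Y_j}(1-\varepsilon^{\Upsilon\Upsilon}_{ppqqj})^{\theta/2}\,c^\Upsilon_{pq}\,K^\Upsilon_{pqj}$ and, by Proposition~\ref{prp:HS-projection}, the sections $K^\Upsilon_{pqj}(\dummy,w)$ are pairwise orthogonal in $L^2(\Sphere)$ with $\lnorm\labs K^\Upsilon_{pqj}(\dummy,w)\rabs_{\HS}\rnorm_2^2=\dim\Upsilon_{pqj}/\sigma(\Sphere)$, this equals
\[
\frac{1}{\sigma(\Sphere)}\sum_{(p,q,\Upsilon)\in I_j\times Y_j}(1-\varepsilon^{\Upsilon\Upsilon}_{ppqqj})^{\theta}\,\labs c^\Upsilon_{pq}\rabs^2\,\dim\Upsilon_{pqj}.
\]
Next I would split the sum according to which set $H^\Upsilon_i$ contains $(p,q)$. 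A short computation with the explicit formula \eqref{eq:deflambdapqj} for $\lambda_{pqj}$ and the description of $I_j$ (recalling that $\lambda^\Psi_{pqj}=\lambda_{pq(j-1)}$) shows that $\lambda^\Upsilon_{pqj}>1$ whenever $c^\Upsilon_{pq}\neq0$; together with the hypothesis $\lambda^\Upsilon_{pqj}\le N$, this places every nonzero term in $H^\Upsilon_i$ for some integer $i$ with $2\le i\le N$, on which $\labs c^\Upsilon_{pq}\rabs^2\le C_i$. Hence it suffices to prove that, for each $\Upsilon\in\TT_j$ and each integer $i\ge2$,
\[
\sum_{(p,q)\in H^\Upsilon_i}(1-\varepsilon^{\Upsilon\Upsilon}_{ppqqj})^{\theta}\,\dim\Upsilon_{pqj}\le C\,i^{2n-1-2\theta},
\]
since then the left-hand side is $\lesssim\sum_{i=2}^{N}C_i\,i^{2n-1-2\theta}\le N^{2n-1-2\theta}\sum_{i=2}^{N}C_i$, as required.

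For the last display I would combine three ingredients. Weyl's dimension formula, in the shape of Lemma~\ref{lem:dimensions}, yields $\dim\Upsilon_{pqj}\lesssim(1+p+q)(1+p)^{n-2}(1+q)^{n-2}$; Lemma~\ref{lem:conti} yields $1-\varepsilon^{\Upsilon\Upsilon}_{ppqqj}\lesssim(1+p)(1+q)(1+p+q)^{-2}$; and \eqref{eq:deflambdapqj} gives $(\lambda^\Upsilon_{pqj})^2\simeq(1+p)(1+q)$ with constants depending only on $n$ and $j$, so on $H^\Upsilon_i$ (where $i\ge2$, hence $(i-1)^2\simeq i^2$) one has $(1+p)(1+q)\simeq i^2$. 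Multiplying the first two bounds and using the third, each summand is $\lesssim i^{2n-4+2\theta}(1+p+q)^{1-2\theta}$, so the matter reduces to showing that $\sum_{(p,q)\in H^\Upsilon_i}(1+p+q)^{1-2\theta}\lesssim i^{3-4\theta}$. I would prove this by organising the sum according to the smaller of $p$ and $q$: for a fixed value of one variable, \eqref{eq:deflambdapqj} confines the other to an interval of length $\lesssim i/(1+\min(p,q))$, and what remains is to sum a geometric-type series in $\min(p,q)$. This is exactly where the hypothesis $\theta<1/2$ enters, through the convergence of $\sum_{r\ge1}r^{2\theta-2}$; it forces the sum to be dominated by the terms with $\min(p,q)$ of order $1$ — of which there are $\lesssim i$, each carrying the weight $(1+p+q)^{1-2\theta}\simeq(i^2)^{1-2\theta}$ — and hence to be bounded by a constant times $i^{3-4\theta}$.

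The only genuinely delicate point is this last lattice-point estimate: a crude bound on $\card H^\Upsilon_i$ combined with the worst-case value of $(1+p+q)^{1-2\theta}$ is too lossy, since the hyperbolic strip $(1+p)(1+q)\simeq i^2$ is thick (and carries small weight) near its balanced part but thin (and carries the large weight $\simeq i^{2-4\theta}$) near the axes; one has to exploit this anticorrelation, which is precisely what organising the sum by $\min(p,q)$ achieves. The remaining cases — $\Upsilon=\Psi$ in place of $\Upsilon=\Phi$, and the boundary values $j=0$ and $j=n-1$, for which $\TT_j$ has a single element — are handled in exactly the same way, using the corresponding parts of Lemmas~\ref{lem:dimensions} and~\ref{lem:conti} and the identity $\lambda^\Psi_{pqj}=\lambda_{pq(j-1)}$.
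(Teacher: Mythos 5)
Your proposal is correct and follows essentially the same route as the paper: Corollary~\ref{cor:C-5.1v} plus the orthogonality relations of Proposition~\ref{prp:HS-projection} reduce everything to the per-annulus bound $\sum_{(p,q)\in H^\Upsilon_i}(1-\varepsilon^{\Upsilon\Upsilon}_{ppqqj})^{\theta}\dim\Upsilon_{pqj}\lesssim i^{2n-1-2\theta}$, which is then proved from Lemmas~\ref{lem:dimensions} and~\ref{lem:conti} together with a lattice-point count in the hyperbolic strip organised by the smaller of the two indices, with $\theta<1/2$ giving convergence of the resulting series. The paper's only cosmetic difference is a change of variables to $p'=p+n-j$, $q'=q+j$ making $(\lambda^\Upsilon_{pqj})^2=2p'q'$ exact, so that the key sum takes the cleaner form $\sum\bigl((p'+q')/(p'q')\bigr)^{1-2\theta}\lesssim i$.
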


\begin{proof}
Fix a kernel polynomial $K$ as in \eqref{eq:def-polynomials} with $c^\Upsilon_{pq}=0$ if $\lambda^\Upsilon_{pqj} \notin (0,N]$.
Note that $(\lambda_{pqj}^\Upsilon)^2$ assumes nonnegative even integer values, hence it is at least $1$ if it does not vanish.
Now Proposition~\ref{prp:HS-projection} implies that
\[\begin{aligned}
\lnorm \labs M^{\theta} K(\dummy ,w)\rabs_{\HS} \rnorm_2^2
&=\sum_{(p,q,\Upsilon) \in I_j \times Y_j}  \bigl(1-\varepsilon^{\Upsilon\Upsilon}_{ppqqj}\bigr)^{\theta} \labs c^\Upsilon_{pq}\rabs^2 \lnorm \labs K^\Upsilon_{pqj}(\dummy ,w) \rabs_{\HS} \rnorm_2^2
\\
&=
\sigma(\Sphere)^{-1} \sum_{(p,q,\Upsilon) \in I_j \times Y_j} \bigl(1-\varepsilon^{\Upsilon\Upsilon}_{ppqqj}\bigr)^{\theta} \labs c^\Upsilon_{pq}\rabs^2  \dim \Upsilon_{pqj}\\
&\leq \sigma(\Sphere)^{-1} \sum_{i=2}^{N} C_i \sum_{\Upsilon \in \TT_j}    \sum_{(p,q)\in H^\Upsilon_{i}}  \bigl(1-\varepsilon^{\Upsilon\Upsilon}_{ppqqj}\bigr)^{\theta} \dim \Upsilon_{pqj} .
\end{aligned}\]
In view of Corollary~\ref{cor:C-5.1v},  it  suffices to prove that
\begin{equation}\label{eq:Hi-estimate}
\sum_{\Upsilon \in \TT_j} \sum_{(p,q)\in H^\Upsilon_{i}}
  \bigl(1-\varepsilon^{\Upsilon\Upsilon}_{ppqqj}\bigr)^{\theta}
    \dim \Upsilon_{pqj}
\lesssim
i^{2(n-\theta)-1}
\end{equation}
for all integers $i \geq 2$.

Recall that 
\[
(\lambda^\Phi_{pqj})^2 = 2(q+j)(p+n-j) 
\qquad\text{and}\qquad
(\lambda^\Psi_{pqj})^2 = 2(q+j-1)(p+n-j+1) .
\]
We may write both expressions for the eigenvalues as $2p'q'$, where $p' = p+n-j$ and $q' = q+j$ in the first case, and $p' = p+n-j+1$ and $q' = q+j-1$ in the second.
When $(p,q) \in I_j \cap H^\Upsilon_{i}$, it is clear that both $p' \geq 1$ and $q' \geq 1$ and also $p'q' \simeq i^2$.
Further, for such $(p,q)$, 
\[
\dim \Upsilon_{pqj}\lesssim (p'q')^{n-2} (p'+q')
\]
 by Lemma~\ref{lem:dimensions}, and 
\[
1-\varepsilon^{\Upsilon\Upsilon}_{ppqqj} \lesssim \frac{p'q'}{(p'+q')^2} 
\]
by Lemma \ref{lem:conti}.
Hence we are reduced to proving that
\begin{equation*}
\sum_{\substack{p' \geq 1, q' \geq 1 \\ (i-1)^2 \leq 2p'q' \leq i^2}} \frac{(p'q')^\theta}{(p'+q')^{2\theta}} \cdot (p'q')^{n-2} (p'+q' ) 
\lesssim i^{2(n-\theta)-1},
\end{equation*}
or equivalently, using the fact that $p'q' \simeq i^2$,
\begin{equation}\label{eq:Hi-estimate-v}
\sum_{\substack{p' \geq 1, q' \geq1 \\ (i-1)^2 \leq 2p'q' \leq i^2}}   \left( \frac{p'+q'}{p'q'} \right)^{1-2\theta} \lesssim i
\end{equation}
for all $k \in \lset0,\dots,n-2\rset$.

By symmetry, it suffices to treat the part of this sum where $p' \geq q'$, and so in particular $q' \leq i$.
Since the number of integer points in an interval is within $1$ of the length of the interval,
\[
\frac{2i-1}{2q'} - 1 \leq \card \lset p' \in \Z: (i-1)^2 \leq 2 p'q' \leq i^2\rset \leq \frac{2i-1}{2q'} + 1 
\]
when $1 \leq q' \leq i$, whence
\begin{align*}
\sum_{\substack{ 1 \leq q' \leq p' \\ (i-1)^2 \leq 2p'q' \leq i^2}}   \left( \frac{p'+q'}{p'q'} \right)^{1-2\theta}
&\leq \sum_{ 1 \leq q' \leq i }  \left(\frac{i}{q'} + 1\right) \left( \frac{2}{q'} \right)^{1-2\theta} \\
&\leq 2^{1-2\theta} i \sum_{ 1 \leq q' \leq i } \left( \frac{1}{q'} \right)^{2-2\theta} + 2^{1-2\theta} \sum_{ 1 \leq q' \leq i } 1 \\
&\lesssim i,
\end{align*}
as required.
The implied constant tends to $\infty$ when $\theta$ tends to $1/2$.
Indeed, using both bounds for the cardinality above shows that the number of integer points in the region between the two hyperbolae under consideration is of the order of $i \log i$.
\end{proof}

Finally, we prove our main results. 
From now on we assume that $1 \leq j \leq n-2$ (and so $n \geq 3$).

\begin{proof}[Proof of Theorem \ref{thm:main}]
Note that  $\lambda_{pqj}^\Upsilon \neq 0$ for all $(p,q,\Upsilon) \in I_j \times Y_j$.
Therefore $\boxb$ on $\Forms{j}$ has trivial kernel and the operator $F(\boxb)$ does not depend on the value of $F$ in $0$; consequently we may assume that the multiplier $F$ vanishes at $0$. 
To conclude, it suffices to apply Theorem~\ref{thm:abstractmult} to the operator $\AbsOp = \boxb$ acting on sections of the bundle $\Lambda^{0,j}$ on the metric measure space $(\Sphere,\dist,\sigma)$, with weight $\varpi = \weight^{\theta}$ and ``dimension constant'' $d = 2n-\theta$, for all $\theta \in (0,1)$.
Let us check that the hypotheses of Theorem~\ref{thm:abstractmult} are satisfied.

It follows from the local behaviour of the distance $\dist$ (see \eqref{eqn:distance-comparison}) that 
\[
\sigma(B(z,r)) \simeq \min(1,r^{2n})
\]
for all $z \in \Sphere$ and $r \in \R_+$ (see \cite[Section 5.1]{Rudin}), and Hypothesis (i) follows. 
Since both $\dist$ and $\weight$ are $\group{U}(n)$-invariant, if $z_0 = (1,0,\dots,0)$ and $w = (w_1,w')$, then
\[
\int_{B(z,t)} \weight(z,w)^{-\theta} \,d\sigma(w) 
= \int_{B(z_0,t)} \labs w'\rabs^{-\theta} \,d\sigma(w) \lesssim t^{2n-\theta},
\]
which is Hypothesis (ii).
Hypothesis (iv) follows from the finite propagation speed property of $\boxb$ (see Section \ref{section:complex}).

To show Hypothesis (iii), from \eqref{eq:operatornorm}, Lemma \ref{lem:Cor-4.3v}, Proposition \ref{prp:HS-projection} and \eqref{eq:Hi-estimate} it follows that
\[\begin{aligned}
\lnorm (1+r^2 \boxb)^{-\ell} \rnorm^2_{L^2(\Sphere) \to L^\infty(\Sphere)} 
&\leq \sum_{(p,q,\Upsilon) \in I_j \times Y_j} (1+(r\lambda_{pqj}^\Upsilon)^2)^{-2\ell} \frac{\dim \Upsilon_{pqj}}{\sigma(\Sphere)} \\
&\lesssim \sum_{i=2}^\infty (1+ r^2 i^2)^{-2\ell} \sum_{\Upsilon \in \TT_j} \sum_{(p,q) \in H_{i}^\Upsilon} \dim \Upsilon_{pqj} \\
&\lesssim \sum_{i=2}^\infty (1+r^2 i^2)^{-2\ell} i^{2n-1} \\
&\lesssim \min(1,r^{2n})^{-1} \simeq \sigma(B(z,r))^{-1}
\end{aligned}\]
for all $z \in \Sphere$ and all $r \in \R$, whenever the integer $\ell$ is greater than $n/2$.

Finally, suppose that the Borel function $F : \R \to \C$ is supported in $[0,N]$.
Then, by Lemma~\ref{lem:Cor-4.3v}, the kernel polynomial $K_{F(\sqrt{\boxb})}$ satisfies the assumptions of Proposition~\ref{prp:P-5.2v}; consequently, for all $\theta \in (0,1)$,
\begin{align*}
&\int_{\Sphere} \labs K_{F(\sqrt{\boxb})}(z,w)\rabs_{\HS}^2 \, \labs \weight(z,w)\rabs^{\theta}\,d\sigma(z) \\
&\qquad\lesssim
 N^{2n-1-\theta} \sum_{i=2}^{N} \max\Biglset \labs F({\lambda_{pqj}^\Upsilon}) \rabs^2 : \Upsilon \in \TT_j, \, (p,q) \in H_{i}^\Upsilon \Bigrset
       \\
&\qquad \lesssim
 N^{2n-\theta}  \frac{1}{N} 
    \sum_{i=2}^{N}
        \sup_{\lambda \in [i-1,i]} \labs F(\lambda) \rabs^2 \\
&\qquad \lesssim N^{2n-\theta}\lnorm F(N \dummy ) \rnorm_{N,2}^2,
\end{align*}
establishing Hypothesis (v).
\end{proof}

\begin{proof}[Proof of Theorem \ref{thm:bochnerriesz}]
If $z \in \C$ and $\Re z > (2n-2)/2$, then the function $F : \R \to \C$ given by $F(\lambda) = (1-\lambda^2)_+^z$ belongs to $H^s(\R)$ for some $s > (2n-\theta)/2$ and $\theta \in (0,1)$, and its $H^s(\R)$-norm may be bounded uniformly in $\Im z$. 
Therefore by Theorem~\ref{thm:abstractmult2} applied with $d = 2n-\theta$ it follows  that,  for all $p \in [1,\infty]$, $(1-t\boxb)_+^{z}$ is bounded on $L^p(\Lambda^{0,j})$ uniformly in $\Im z$ and in $t \in \R_+$. 
On the other hand, if $\Re z \geq 0$, then trivially $(1-t\boxb)_+^{z}$ is bounded on $L^2(\Lambda^{0,j})$ uniformly in $t$ and $z$. 
The result for intermediate values of $\Re z$ then follows by analytic interpolation (see, for example, \cite[\S V.4]{StW2}).
\end{proof}

\section{Some representation theory}
To establish Lemma \ref{lem:KlymikVilenkin}, which we used in Section 4, the following enhanced version of Schur's orthogonality relations  will be useful.
To state the result, we recall that, given a unitary representation of a compact group $G$ on a Hilbert space $V$, and vectors $v,w \in V$, the function $\phi^\pi_{v,w}$ given by
\[
\phi^\pi_{v,w}(x) = \lip \pi(x) v, w \rip
\qquad\forall x \in G,
\]
is called a matrix coefficient of $\pi$. 

\begin{lemma}\label{lem:orthorelations}
Let $\pi$ be a unitary representation of a compact group $G$ on a Hilbert space $V$, and let $H \subseteq V$ be a minimal nontrivial invariant subspace of $H$, such that the subrepresentation of $\pi$ on $H$ has multiplicity $1$ in $\pi$. 
If $v,w \in H$ and $v',w' \in V$, then
\[
\lip \phi^\pi_{v,w}, \phi^\pi_{v',w'} \rip = \frac{\lip v, v' \rip \lip w', w \rip}{\dim H},
\]
where the inner product on the left-hand side is the inner product of $L^2(G)$ with respect to the normalized Haar measure.
\end{lemma}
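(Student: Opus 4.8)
The plan is to reduce the statement to the \emph{classical} Schur orthogonality relations, the multiplicity-one hypothesis being exactly what is needed to discard the contribution of $H^{\perp}$. First I would recall that every unitary representation of a compact group is a Hilbert-space direct sum of irreducibles; in particular $V = H \oplus H^{\perp}$ with $H^{\perp}$ a closed $\pi$-invariant subspace, and since the equivalence class $[H]$ of the (irreducible) subrepresentation carried by $H$ occurs in $\pi$ with multiplicity one, the $[H]$-isotypic component of $\pi$ equals $H$, and no irreducible constituent of $\pi|_{H^{\perp}}$ is equivalent to $[H]$. Let $P$ be the orthogonal projection of $V$ onto $H$; as $H$ and $H^{\perp}$ are both $\pi$-invariant and $\pi$ is unitary, $P$ commutes with $\pi(x)$ for every $x \in G$.

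Next I would split $v' = Pv' + (1-P)v'$ and $w' = Pw' + (1-P)w'$. Since $\pi(x)$ preserves each of $H$ and $H^{\perp}$ and these subspaces are orthogonal, the two cross terms vanish pointwise on $G$, so that
\[
\phi^{\pi}_{v',w'} = \phi^{\pi}_{Pv',Pw'} + \phi^{\pi}_{(1-P)v',(1-P)w'}.
\]
Pairing with $\phi^{\pi}_{v,w}$ in $L^{2}(G)$, I would treat the two resulting summands separately. For the first summand, all four vectors $v,w,Pv',Pw'$ lie in $H$ and $\pi|_{H}$ is irreducible of dimension $\dim H$, so the classical Schur orthogonality relations give $\langle \phi^{\pi}_{v,w},\phi^{\pi}_{Pv',Pw'}\rangle = (\dim H)^{-1}\langle v,Pv'\rangle\langle Pw',w\rangle$, which equals $(\dim H)^{-1}\langle v,v'\rangle\langle w',w\rangle$ because $v,w\in H$ and hence $Pv=v$, $Pw=w$. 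For the second summand, $\phi^{\pi}_{(1-P)v',(1-P)w'}$ is a matrix coefficient of $\pi|_{H^{\perp}}$: expanding $(1-P)v'$ and $(1-P)w'$ along the isotypic components of $H^{\perp}$ and passing to the $L^{2}(G)$-limit of the resulting series of matrix coefficients (which converges in the sup norm, hence in $L^{2}$), one sees that it lies in the closed span of matrix coefficients of irreducible types all distinct from $[H]$; by the Peter--Weyl orthogonality of matrix coefficients belonging to inequivalent irreducible types, it is orthogonal in $L^{2}(G)$ to $\phi^{\pi}_{v,w}$. Summing the two contributions yields the claimed formula.

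The argument is mostly bookkeeping; the one genuine point is the use of the multiplicity-one hypothesis, which is precisely what forces the $H^{\perp}$-part to be $L^{2}(G)$-orthogonal to $\phi^{\pi}_{v,w}$, and the only minor technical care is the limiting step when $\pi|_{H^{\perp}}$ decomposes into infinitely many irreducible pieces. Alternatively, one can avoid $L^{2}$-orthogonality altogether: form the intertwining operator $\tilde C = \int_{G} \pi(x)\, C\, \pi(x)^{-1}\,dx$, where $C\xi = \langle \xi,v'\rangle v$; a direct computation, moving $\pi(x)^{-1}=\pi(x)^{*}$ across the inner product, shows that $\langle \tilde C w',w\rangle = \langle \phi^{\pi}_{v,w},\phi^{\pi}_{v',w'}\rangle_{L^{2}(G)}$, while Schur's lemma together with the multiplicity-one hypothesis (which kills any intertwiner $H^{\perp}\to H$) forces $\tilde C = \mu P$ with $\mu\dim H = \tr \tilde C = \tr C = \langle v,v'\rangle$; then $\langle \tilde C w',w\rangle = (\dim H)^{-1}\langle v,v'\rangle\langle Pw',w\rangle = (\dim H)^{-1}\langle v,v'\rangle\langle w',w\rangle$. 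This second route is in the spirit of the averaging arguments already used in Section~4.
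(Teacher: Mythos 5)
Your proposal is correct. In fact you give two proofs, and your second (``alternative'') route is essentially the paper's own argument: the paper averages the rank-one operator $Eu = \lip u, w\rip w'$ over $G$, identifies the average as $(\dim H)^{-1}\lip w',w\rip P_H$ via Schur's lemma, the trace, and the multiplicity-one hypothesis, and then evaluates $\lip \tilde E v, v'\rip$ in two ways --- exactly your $\tilde C$ computation with the roles of the vector pairs swapped. Your first route is genuinely different: instead of Schur's lemma applied to an averaged operator, you decompose $V = H \oplus H^\perp$, split $v'$ and $w'$, invoke the classical Schur orthogonality relations for the irreducible $\pi\rest_H$ to handle the $H$-components, and use Peter--Weyl orthogonality of matrix coefficients of inequivalent irreducible types (together with multiplicity one) to kill the $H^\perp$-contribution. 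That route buys conceptual transparency --- it makes explicit that the lemma is the classical orthogonality relation plus the observation that multiplicity one localizes everything to $H$ --- at the cost of the convergence bookkeeping you rightly flag when $H^\perp$ decomposes into infinitely many irreducibles (harmless here, since the series of matrix coefficients converges uniformly by Cauchy--Schwarz). The averaging route avoids any decomposition of $H^\perp$ altogether, which is why the paper adopts it; your observation that $\tilde C\rest_{H^\perp}=0$ because multiplicity one kills intertwiners $H^\perp \to H$ is the one point where the hypothesis enters, and you identify it correctly.
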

\begin{proof}
Define $E \in \Lin(V)$  by $Eu = \lip u, w \rip w'$, and let $\tilde E$ be the average of $E$ over $G$, as in the proof of Proposition~\ref{prp:TheoremProjectionv}. 
From the definition of $\tilde E$ we obtain immediately that $\tr \tilde E = \tr E = \lip w', w \rip$ and $\tilde E\rest_{H^\perp} = 0$, since $w \in H$ and $H$ is $\pi$-invariant. 
On the other hand, $\tilde E$ is an intertwining operator for $\pi$ and consequently $\tilde E(H) \subseteq H$ by Schur's lemma, given that the subrepresentation of $\pi$ on $H$ has multiplicity $1$ in $\pi$.
Therefore, again by Schur's lemma, $\tilde E$ is a multiple of the orthogonal projection $P_H$ on $H$ and  $ \tilde E = (\dim H) ^{-1}\lip w', w \rip P_H$ since $\tr P_H = \dim H$.
On the other hand
\[\begin{aligned}
\lip \tilde E v, v' \rip = \int_G \lip \pi(x^{-1}) E \pi(x) v, v' \rip \,dx = \int_G \lip \pi(x) v, w \rip \lip w', \pi(x) w \rip \,dx = \lip \phi^\pi_{v,w} , \phi^\pi_{v',w'} \rip
\end{aligned}\]
and the conclusion follows.
\end{proof}

By using these orthogonality relations, we may now prove the representation-theoretic result that we needed in Section 4.
For the reader's convenience, we restate the lemma.
In this argument, the ranges of sequences and summations are evident, and we usually omit these.

\begin{lemma*}
Let $\mu$ and $\nu$ be irreducible unitary representations of a compact group $G$ on Hilbert spaces $V^\mu$ and $V^\nu$.
Let $H$ be a minimal nontrivial invariant subspace of $V^\mu \otimes V^\nu$ with respect to the representation $\mu \otimes \nu$ of $G$ and let $\xi$ be the subrepresentation of $\mu \otimes \nu$ on $H$. 
Suppose that $\xi$ appears with multiplicity $1$ in $\mu \otimes \nu$.
Let $P_H \in \Lin(V^\mu \otimes V^\nu)$ be the orthogonal projection onto $H$ and $\lset e^\nu_\ell\rset_\ell$ be an orthonormal basis of $V^\nu$. 
Then
\[
\sum_{\ell} \lnorm P_H ( v \otimes e^\nu_\ell ) \rnorm^2 = \frac{\dim H}{\dim V^\mu} \lnorm v\rnorm^2
\]
for all $v \in V^\mu$.
\end{lemma*}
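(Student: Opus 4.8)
The plan is to rewrite $\sum_\ell \lnorm P_H(v\otimes e^\nu_\ell)\rnorm^2$ as an inner product of characters in $L^2(G)$, by summing the identities supplied by Lemma~\ref{lem:orthorelations}, and then to evaluate that inner product by ordinary Schur orthogonality. Throughout write $\pi = \mu\otimes\nu$ for the representation on $V^\mu\otimes V^\nu$, and fix an orthonormal basis $\lset h_k\rset_k$ of $H$.

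First I would record the elementary ingredients: $\lnorm P_H u\rnorm^2 = \sum_k \labs\lip h_k,u\rip\rabs^2$ for every $u\in V^\mu\otimes V^\nu$; $\sum_k \phi^\pi_{h_k,h_k}(g) = \tr\bigl(\pi(g)\rest_H\bigr) = \chi_\xi(g)$ (since $H$ is $\pi$-invariant); and $\sum_\ell \phi^\pi_{v\otimes e^\nu_\ell,\,v\otimes e^\nu_\ell}(g) = \lip\mu(g)v,v\rip\,\chi_\nu(g)$, where $\chi_\xi,\chi_\nu$ are the characters of $\xi,\nu$. Now, for each $k$ and $\ell$, Lemma~\ref{lem:orthorelations} applied to $\pi$, to the multiplicity-one minimal invariant subspace $H$, and to the vectors $v=w=h_k\in H$ and $v'=w'=v\otimes e^\nu_\ell\in V^\mu\otimes V^\nu$ gives
\[
\lip \phi^\pi_{h_k,h_k},\, \phi^\pi_{v\otimes e^\nu_\ell,\,v\otimes e^\nu_\ell}\rip_{L^2(G)} = \frac{\labs\lip h_k,\,v\otimes e^\nu_\ell\rip\rabs^2}{\dim H}.
\]
Summing over the finite ranges of $k$ and $\ell$ and pulling the sums through each argument of the $L^2$-pairing collapses the left-hand side to $\lip \chi_\xi,\, \lip\mu(\dummy)v,v\rip\,\chi_\nu\rip_{L^2(G)}$ and the right-hand side to $(\dim H)^{-1}\sum_\ell \lnorm P_H(v\otimes e^\nu_\ell)\rnorm^2$, whence $\sum_\ell \lnorm P_H(v\otimes e^\nu_\ell)\rnorm^2 = \dim H \cdot \int_G \overline{\chi_\xi(g)}\,\lip\mu(g)v,v\rip\,\chi_\nu(g)\,dg$ (up to an irrelevant conjugation, the value being real).

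It then remains to evaluate the integral. Using $\overline{\chi_\xi}\,\chi_\nu = \overline{\chi_{\xi\otimes\nu^*}}$, where $\nu^*$ is the contragredient of $\nu$, and decomposing $\xi\otimes\nu^*$ into irreducibles, the orthogonality of the matrix coefficient $g\mapsto\lip\mu(g)v,v\rip$ of the irreducible $\mu$ against irreducible characters shows that the integral equals $m\,\lnorm v\rnorm^2/\dim V^\mu$, where $m$ is the multiplicity of $\mu$ in $\xi\otimes\nu^*$. Finally, the standard adjunction $\Hom_G(V^\mu,\,V^\xi\otimes V^{\nu^*})\cong\Hom_G(V^\mu\otimes V^\nu,\,V^\xi)$ identifies $m$ with the multiplicity of $\xi$ in $\mu\otimes\nu$, which is $1$ by hypothesis; substituting back yields $\sum_\ell \lnorm P_H(v\otimes e^\nu_\ell)\rnorm^2 = (\dim H/\dim V^\mu)\lnorm v\rnorm^2$.

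The only genuinely non-routine step is the last identification: that the multiplicity of $\mu$ in $\xi\otimes\nu^*$ equals that of $\xi$ in $\mu\otimes\nu$, which is where the multiplicity-one hypothesis enters. Everything else is bookkeeping with Schur orthogonality and with conjugation conventions (here it is the characters, not the coefficients, that are conjugated), and all the sums in play — over $k$, over $\ell$, and over the irreducible constituents of $\xi\otimes\nu^*$ — are finite, so no convergence question arises.
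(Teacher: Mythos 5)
Your proof is correct, and while it shares its skeleton with the paper's argument, the finish is genuinely different. Both proofs expand $\sum_\ell \lnorm P_H(v\otimes e^\nu_\ell)\rnorm^2$ over an orthonormal basis of $H$, convert each term into an $L^2(G)$ pairing of diagonal matrix coefficients via Lemma~\ref{lem:orthorelations}, and ultimately rest on the fact that $\mu$ occurs exactly once in $\xi\otimes\tilde\nu$. But after the single application of Lemma~\ref{lem:orthorelations} you sum over both bases at once, collapsing the matrix coefficients to the characters $\chi_\xi$ and $\chi_\nu$, so that all that remains is ordinary Schur orthogonality of the coefficient $\phi^\mu_{v,v}$ against irreducible characters, together with the character identity $\lip \chi_\mu\chi_\nu,\chi_\xi\rip = \lip \chi_\mu,\chi_\xi\overline{\chi_\nu}\rip$ --- which is exactly how the paper establishes its own multiplicity claim in the first sentence of its proof. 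The paper instead keeps the indices $m,\ell$ alive, moves the $\nu$-coefficient across the pairing via $\overline{\phi^\nu_{e^\nu_\ell,e^\nu_\ell}}=\phi^{\tilde\nu}_{e^{\tilde\nu}_\ell,e^{\tilde\nu}_\ell}$, and applies Lemma~\ref{lem:orthorelations} a \emph{second} time, to $\xi\otimes\tilde\nu$ and its multiplicity-one constituent $W\cong V^\mu$; this requires producing an auxiliary vector $w\in W$ with $\phi^\eta_{w,w}=\phi^\mu_{v,v}$, and it makes the ``symmetry of Clebsch--Gordan coefficients'' visible term by term, at the cost of extra bookkeeping. Your route is the more streamlined of the two. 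One small correction to your closing remark: the multiplicity-one hypothesis enters not only in the adjunction identifying $m$ with the multiplicity of $\xi$ in $\mu\otimes\nu$, but already in the very first application of Lemma~\ref{lem:orthorelations}, whose hypotheses require that the subrepresentation on $H$ occur with multiplicity one in $\mu\otimes\nu$.
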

\begin{proof}
First, $\mu$ appears with multiplicity $1$ in $\xi \otimes \tilde\nu$, where $\tilde\nu$ is the contragredient representation to $\nu$. 
This is easily seen by writing the multiplicity as an inner product of characters and exploiting the formulae for characters of tensor products and contragredient representations (see, for example, \cite[\S IV.2, p. 243]{Kn}).

Let $\lset e^\xi_m\rset_m$ be an orthonormal basis of $H$. 
Then
\[
\sum_\ell \lnorm P_H ( v \otimes e^\nu_\ell ) \rnorm^2 = \sum_\ell \sum_m \labs \lip e^\xi_m, v \otimes e^\nu_\ell \rip \rabs^2.
\]
On the other hand, by Lemma~\ref{lem:orthorelations},
\[
\labs \lip e^\xi_m, v \otimes e^\nu_\ell \rip \rabs^2 = \dim H \lip \phi^{\mu\otimes\nu}_{e^\xi_m,e^\xi_m},
\phi^{\mu\otimes\nu}_{v \otimes e^\nu_\ell,v \otimes e^\nu_\ell} \rip
=  \dim H \lip \phi^{\xi}_{e^\xi_m,e^\xi_m}, \phi^{\mu}_{v,v} \, \phi^{\nu}_{e^\nu_\ell,e^\nu_\ell} \rip \,.
\]
Note now that $\overline{\phi^\nu_{e^\nu_\ell,e^\nu_\ell}} = \phi^{\tilde\nu}_{e^{\tilde\nu}_\ell, e^{\tilde\nu}_\ell}$, where $\lset e^{\tilde\nu}_\ell\rset_\ell$ is the dual basis to $\lset e^\nu_\ell\rset_\ell$. 
In particular
\[
\labs \lip e^\xi_m, v \otimes e^\nu_\ell \rip \rabs^2 = \dim H \lip \phi^{\xi}_{e^\xi_m,e^\xi_m} \,
\phi^{\tilde\nu}_{e^{\tilde\nu}_\ell,e^{\tilde\nu}_\ell} , \phi^{\mu}_{v,v} \rip = \dim H \lip \phi^{\xi\otimes\tilde\nu}_{e^\xi_m\otimes e^{\tilde\nu}_\ell,e^\xi_m \otimes e^{\tilde\nu}_\ell} , \phi^{\mu}_{v,v} \rip.
\]
Since $\mu$ occurs in $\xi \otimes \tilde\nu$ with multiplicity $1$, there exists a subspace $W$ of $H \otimes (V^\nu)^*$ such that the subrepresentation $\eta$ of $\xi \otimes \tilde\nu$ on $W$ is equivalent to $\mu$, and consequently there exists $w \in W$ such that $\lnorm w\rnorm = \lnorm v\rnorm$ and $\phi^\eta_{w,w} = \phi^\mu_{v,v}$. 
In particular, again by Lemma~\ref{lem:orthorelations},
\[
\labs \lip e^\xi_m, v \otimes e^\nu_\ell \rip \rabs^2 = \frac{\dim H}{\dim W} \labs \lip e^\xi_m\otimes e^{\tilde\nu}_\ell, w\rip\rabs^2.
\]
(For irreducible representations, and vectors in orthonormal bases of the representation spaces, this property is known as the symmetry of Clebsch--Gordan coefficients---see, for instance, \cite[\S18.2.1]{VK}).
Since $\lset e^\xi_m\otimes e^{\tilde\nu}_\ell\rset_{m,\ell}$ is an orthonormal basis of $H \otimes (V^\nu)^*$, by summing the above equality over $m$ and $\ell$ we obtain
\[
\sum_\ell \lnorm P_H ( v \otimes e^\nu_\ell ) \rnorm^2 = \frac{\dim H}{\dim W} \lnorm w\rnorm^2 = \frac{\dim H}{\dim V^\mu} \lnorm v\rnorm^2,
\]
and we are done.
\end{proof}

\end{document}